\theoremstyle{definition}
\newtheorem{definition}{Definition}[section]
\theoremstyle{plain}
\newtheorem{theorem}[definition]{Theorem}
\newtheorem{proposition}[definition]{Proposition}
\newtheorem{lemma}[definition]{Lemma}
\newtheorem{example}[definition]{Example}
\theoremstyle{remark}
\newtheorem{remark}[definition]{Remark}
\numberwithin{equation}{section}
\begin{document}
\title{Braid group action and quantum affine superalgebra for type $\mathfrak{osp}(2m+1|2n)$}

\author{Xianghua Wu${}^{1,2}$, Hongda Lin${}^{1,3}$  and Honglian Zhang${}^{1,}$\thanks{ Corresponding author \quad Email: hlzhangmath@shu.edu.cn}}
\maketitle

\begin{center}
\footnotesize
\begin{itemize}
\item[1] Department of Mathematics, Shanghai University, Shanghai 200444, China.
\item[2] School of Mathematics and Computing Science, Guilin University of Electronic Technology, Guilin 541000,  P.R. China.
\item[3] Shenzhen International Center for Mathematics, Southern University of Science and Technology, Guangdong 518055, P.R. China.
\end{itemize}
\end{center}
\begin{abstract}

In this paper, we investigate the structure of the quantum affine superalgebra associated with the orthosymplectic Lie superalgebra $\mathfrak{osp}(2m+1|2n)$ for $m\geqslant 1$. The Drinfeld-Jimbo presentation for this algebra, denoted as $U_q[\mathfrak{osp}(2m+1|2n)^{(1)}]$, was originally introduced by H. Yamane. We provide the definition of the Drinfeld presentation $\mathcal{U}_q[\mathfrak{osp}(2m+1|2n)^{(1)}]$.
To establish the isomorphism between the Drinfeld-Jimbo presentation and the Drinfeld presentation of the quantum affine superalgebra for type $\mathfrak{osp}(2m+1|2n)$, we introduce a braid group action to define quantum root vectors of the quantum superalgebra.
Specifically, we present an efficient method for verifying the isomorphism between two presentations of the quantum affine superalgebra associated with the type $\mathfrak{osp}(2m+1|2n)$.\\

\noindent{\textbf{Keywords:}}
{Quantum affine superalgebra; Drinfeld-Jimbo  presentation; Drinfeld  presentation; braid group.}        

\end{abstract}
%


\section{Introduction}
Quantum affine algebras occupy a significant position in the realm of quantum groups and play a pivotal role in various aspects of mathematical physics, particularly in the study of solvable systems and their associated symmetries (\cite{BOMTA}). Independently introduced by Drinfeld (\cite{VGD}) and Jimbo (\cite{MJB}), they represent $q$-deformations of the universal enveloping algebras of affine Kac-Moody algebras. Subsequently, Drinfeld (\cite{VGD2}) presented a new presentation of quantum affine algebras, commonly known as the Drinfeld presentation. This presentation has been demonstrated to offer a clearer framework for studying the representation theory of quantum affine algebras (see \cite{VCA}).

{
In the untwisted case, Beck (\cite{BECK2}) pioneered the construction of a surjective homomorphism from the Drinfeld-Jimbo presentation to the Drinfeld presentation, crucially utilizing Lusztig's braid group action (\cite{Lusz1, Lusz2}). Subsequently, Beck (\cite{BECK1}) extended this methodology to establish a Poincar  -Birkhoff-Witt (PBW)-type convex basis for quantum affine algebras via braid group operators, and a result specialized to the $\widehat{\mathfrak{sl}}_2$ case by Damiani (\cite{Dami1}) and later generalized to twisted types in \cite{Dami2}. Notably, Damiani \cite{Damiani1,Damiani2} rigorously demonstrated that Beck's surjective homomorphism is indeed an isomorphism for both untwisted and twisted quantum affine algebras.
In parallel developments for twisted algebras, Jing-Zhang (\cite{NJHZ}) formulated the Drinfeld realization and proved its equivalence to the Drinfeld-Jimbo presentation.
}

As a supersymmetric extension of quantum affine algebras, quantum affine superalgebras were introduced by incorporating extra generators and relations to account for the $\mathbb{Z}_2$-grading. Over the past two decades, these algebraic structures have garnered significant interest from mathematicians and physicists, driven largely by their applications in exactly solvable lattice models in statistical physics and knot theory. In \cite{HYAM}, Yamane provided the Drinfeld-Jimbo presentations for the quantum affine superalgebras of type $A$-$G$  and extended Beck's method (\cite{BECK2}) to construct the Drinfeld presentation specifically {for $U_q(\widehat{\mathfrak{sl}}_{m|n})$. On the other hand, Cai-Wang-Wu-Zhao (\cite{JSKW}), Fan-Hou-Shi (\cite{HBKD}), and Zhang (\cite{YZC}) have constructed the Drinfeld presentation of the quantum affine superalgebra for $U_q(\widehat{\mathfrak{gl}}_{m|n})$} by applying the Ding-Frenkel theorem (\cite{JDIB}).

Given the increased complexity of the structure of quantum affine superalgebras compared to the non-super case, many scholars tend to focus solely on the type A quantum affine superalgebra. Stukopin (\cite{VS}) investigated the connection between type $A$ super $\hbar$-Yangian and the quantum loop superalgebra, adopting Gautam-Toledano Laredo's approach (\cite{GT}). Tsymbaliuk (\cite{AT}) established a basis of Drinfeld generators of PBW type for the quantum loop superalgebra $U_v{L\mathfrak{sl}(m|n)}$, which can be extended in parallel to the quantum affine superalgebra $U_v{\widehat{\mathfrak{sl}}(m|n)}$. Established this PBW basis for the quantum affine superalgebra of type A, the authors in (\cite{BCFI}) introduced the quantum generalized imaginary Verma modules induced from an irreducible module of quantum Heisenberg algebras. Moreover, numerous works have focused on the representation theory, particularly in terms of the Drinfeld presentation, for type A, such as \cite{KKJSJ}, \cite{ZHFG}, \cite{ZR1}, \cite{YZC2}, and others.

Therefore, delving deeper into the structure and representation theory of quantum affine superalgebras for other cases remains an intriguing and pivotal area of study, notably in the pursuit of formulating the Drinfeld presentation for these superalgebras. Nevertheless, there exist only a few studies focusing on the Drinfeld presentation of quantum affine superalgebras beyond type A. Examples include $U_q[\mathfrak{osp}(1|2n)^{(1)}]$, $U_q[\mathfrak{osp}(2|2n)^{(2)}]$, $U_q[\mathfrak{sl}(1|2n)^{(2)}]$ ({refer to \cite{DGLZ, YXRZ2})} and $U_q[D(2,1;x)^{(1)}]$ (refer to \cite{HSTY}).

In this paper, our objective is to establish the Drinfeld presentation of the quantum affine superalgebra associated with the orthosymplectic Lie superalgebra $\mathfrak{osp}(2m+1|2n)$ while excluding the case $m=0$. We specifically focus on $\mathfrak{osp}(2m+1|2n)^{(1)}$ $(m\geqslant 1)$ with the following standard Dynkin diagram ($N=m+n$):
\begin{center}
    \begin{tikzpicture}
    \draw (0.22,0) circle (0.22);
    \node[above] at (0.22,0.4) {0};
    \draw[double, double distance=3pt] (0.44,0) -- (1.1,0);
    \draw[-{To[scale=2, line width=0.4pt]}, line width=0.4pt] (1.2,0) -- ++(0.01,0);
    \draw (1.46,0) circle (0.22);
    \node[above] at (1.46,0.4) {1};
    \draw (1.68,0) -- (2.48,0);
    \draw (2.70,0) circle (0.22);
    \node[above] at (2.70,0.4) {2};
    \draw (2.92,0) -- (3.62,0);
    \node at (4.00,0) {$\cdots$};
    \draw (4.30,0) -- (4.82,0);
    \draw (5.04,0) circle (0.22) (4.88,0.16) -- (5.2,-0.16) (4.88,-0.16) -- (5.2,0.16);
    \node[above] at (5.04,0.4) {$n$};
    \draw (5.26,0) -- (5.96,0);
    \node at (6.34,0) {$\cdots$};
    \draw (6.64,0) -- (7.16,0);
    \draw (7.38,0) circle (0.22);
    \node[above] at (7.38,0.4) {$N-1$};
    \draw[double, double distance=3pt] (7.60,0) -- (8.26,0);
    \draw[-{To[scale=2, line width=0.4pt]}, line width=0.4pt] (8.36,0) -- ++(0.01,0);
    \draw (8.62,0) circle (0.22);
    \node[above] at (8.62,0.4) {$N$};
\end{tikzpicture}
\end{center}

As noted, Levendorskii (\cite{SZLO}) provided a `minimal presentation' with finitely many Drinfeld generators and relations for Yangians. Jing and Zhang (\cite{NJHZ2}) and Gao, Jing, Xia, and Zhang (\cite{GJXZ}) extended this method to quantum toroidal algebras, and also presenting a `minimal presentation' from their Drinfeld presentations. Recently, Lin, Yamane, and Zhang (\cite{LYZ}) provided a detailed proof of the isomorphism between the Drinfeld-Jimbo presentation and the Drinfeld presentation of the quantum affine superalgebra $U_q(\widehat{\mathfrak{sl}}_{m|n})$ via the `minimal presentation'. {Here we apply this efficient method again,} combined with braid group actions, to assert that this newly formed `minimal presentation' is isomorphic to both the Drinfeld and Drinfeld-Jimbo presentations of the quantum affine superalgebra associated with $\mathfrak{osp}(2m+1|2n)$.

The paper is organized as follows. In Section 2, we introduce notations and the root system of the orthosymplectic affine Lie superalgebra $\mathfrak{osp}(2m+1|2n)^{(1)}$. In Section 3, we provide a review of the Drinfeld-Jimbo presentation of the quantum affine superalgebra $U_q[\mathfrak{osp}(2m+1|2n)^{(1)}]$, define quantum root vectors in the context of \textit{Chevalley generators}. Additionally, we establish the Drinfeld presentation of the quantum affine superalgebra $\mathcal{U}_q[\mathfrak{osp}(2m+1|2n)^{(1)}]$. The main result of this paper is the demonstration of an algebraic isomorphism between these two superalgebras. In Section 4, we delve into the braid group action on $U_q[\mathfrak{osp}(2m+1|2n)^{(1)}]$, presenting essential lemmas contributing to our main theorem. In Section 5, we furnish the proof of our primary theorem, employing a `minimal presentation' denoted as $\mathcal{U}'$.

\section{Preliminaries}
Let $\mathbb{C}$ denote the field of complex numbers, $\mathbb{Q}$ represent the field of rational numbers, $\mathbb{Z}$ stand for the commutative group of integers, and $\mathbb{Z}_+$ refer to the semigroup of non-negative integers. Additionally, consider $\mathbb{Z}_2=\{\bar{0},\bar{1}\}\simeq \mathbb{Z}/2\mathbb{Z}$ as the quotient group.

A superalgebra $X=X_{\bar{0}}\oplus X_{\bar{1}}$ is a $\mathbb{Z}_2$-graded algebra over $\mathbb{C}$, satisfying $X_{\bar{i}}X_{\bar{j}}\subseteq X_{\overline{i+j}}$ for $i,j\in \mathbb{Z}_2$. In this context, elements of $X_{\bar{0}}$ are referred to as even, while those of $X_{\bar{1}}$ are termed odd. The parity of a homogeneous element $x\in X_{\bar{i}}$ is denoted by $[x]=i$. Let $X,Y$ be two superalgebras over $\mathbb{C}$. Then, the \textit{tensor product} $X\otimes Y$ forms a superalgebra with the following multiplication rule for homogeneous elements $x_1,x_2\in X$, $y_1,y_2\in Y$:
\begin{gather*}
(x_1\otimes y_1)(x_2\otimes y_2)=(-1)^{[y_1][x_2]}(x_1x_2\otimes y_1y_2).
\end{gather*}

A Lie superalgebra $\mathfrak{g}=\mathfrak{g}_{\bar{0}}\oplus\mathfrak{g}_{\bar{1}}$ is a $\mathbb{C}$-superspace equipped with a bilinear operator $[\,\cdot\,,\,\cdot\,]:\mathfrak{g}\times \mathfrak{g}\rightarrow \mathfrak{g}$, subject to the following conditions for all homogeneous elements $x,y,z\in\mathfrak{g}$:
\begin{equation*}
\begin{array}{ll}
 &[x,\,y]=-(-1)^{[x][y]}[y,\,x], \\ &[x,\,[y,\,z]]=[[x,\,y],\,z]+(-1)^{[x][y]}[y,\,[x,\,z]].
\end{array}
\end{equation*}
For the orthosymplectic Lie superalgebra $\mathfrak{osp}(2m+1|2n)$ with $m\geqslant 1$, we introduce the $\mathbb{Z}_2$-value function $[~\cdot~]$ on the index set $\{1,\ldots,n,n+1,\ldots,m+n\}$. Particularly, we call $\mathfrak{osp}(2m+1|2n)$ standard if
\begin{equation*}
  [i]=\begin{cases}
  1,&1\leqslant i\leqslant n,\\
  0,&n+1\leqslant i\leqslant m+n.
  \end{cases}
\end{equation*}

Let $\mathfrak{h}$ denote the Cartan subalgebra of $\mathfrak{osp}(2m+1|2n)$. We choose a set of linearly independent functions $\{\varepsilon_i | 1 \leqslant i \leqslant m+n\}$ on $\mathfrak{h}$ such that
\begin{gather*}
\mathfrak{h}^{\ast} = \operatorname{span}_{\mathbb{C}}\{\varepsilon_i | 1 \leqslant i \leqslant n+m\},
\end{gather*}
where the symmetric bilinear form $(\,\cdot\,,\,\cdot\,)$ on $\mathfrak{h}^{\ast}$ is defined as $(\varepsilon_i, \varepsilon_j) = (-1)^{[i]}\delta_{ij}$.
For the simple roots of $\mathfrak{osp}(2m+1|2n)$, consider:
\begin{align*}
&\alpha_i = \varepsilon_i - \varepsilon_{i+1} \quad \text{for }1 \leqslant i < n+m, \\
&\alpha_{n+m} = \varepsilon_{n+m},
\end{align*}
where $[\alpha_i] = 0$ for $i \neq n$, and $[\alpha_n] = 1$. Let $\Pi = \{\alpha_1, \ldots, \alpha_{n+m}\}$ be the root base, and $\theta = 2\sum_{i=1}^{n+m}\alpha_i = 2\varepsilon_1$ be the highest root. The set $\Pi$ spans $\mathfrak{h}^{\ast}$.
Let $Q_+ = \mathbb{Z}_{+}\alpha_1 \oplus \cdots \oplus \mathbb{Z}_{+}\alpha_{m+n}$, and $Q = Q_+ \cup -Q_+$. The set $Q$ is referred to as the root lattice. Denote $\Delta$ (resp. $\Delta_+$) as the root system (resp. the positive root) of $\mathfrak{osp}(2m+1|2n)$.
It is commonly acknowledged that the sets of even positive roots and odd positive roots are given by:
\begin{align*}
&\Delta_+^{\bar{0}} = \{\varepsilon_i \pm \varepsilon_j, 2\varepsilon_k, \varepsilon_l \,|\, [i]=[j], 1 \leqslant i < j \leqslant n+m, [k]=1, [l]=0\}, \\
&\Delta_+^{\bar{1}} = \{\varepsilon_i \pm \varepsilon_j, \varepsilon_k \,|\, [i]\neq [j], 1 \leqslant i < j \leqslant n+m, [k]=1\},
\end{align*}
where $\Delta_+ = \Delta_+^{\bar{0}} \cup \Delta_+^{\bar{1}}$.

Furthermore, we have the following decomposition{
\[
\mathfrak{osp}(2m+1|2n) = \mathfrak{h} \oplus \left(\oplus_{\alpha \in \Delta} \mathfrak{osp}(2m+1|2n)_{\alpha}\right)
\]
where $\mathfrak{osp}(2m+1|2n)_{\alpha}$ is the root space such that $\dim \mathfrak{osp}(2m+1|2n)_{\alpha}=1$.} The Cartan matrix $A=(A_{ij})_{i,j=1}^{m+n}$ of $\mathfrak{osp}(2m+1|2n)$ is defined as:
\begin{equation*}
  A_{ij} = \begin{cases}
  (\alpha_i, \alpha_j), & i < m+n,\\
  2(\alpha_i, \alpha_j), & i = m+n.\\
  \end{cases}
\end{equation*}
Let $h_i \in \mathfrak{h}$ be such that $\langle h_i, \alpha_j \rangle = A_{ij}$ for all $j=1, \ldots, n+m$, {where $\langle, \rangle$ is the Killing form} of Lie superalgebra. Set $\varpi_i = |(\alpha_i, \alpha_i)|/2$. There exists a supersymmetric invariant bilinear form $(\,\cdot\,,\,\cdot\,)$ on $\mathfrak{osp}(2m+1|2n)$ satisfying the following conditions:

\vspace{6pt}
(1) The restriction of $(\,\cdot\,,\,\cdot\,)$ on $\mathfrak{h}$ is non-degenerate, and $(h_i, h) = \varpi_i \langle h, \alpha_i \rangle$ for all $h \in \mathfrak{h}$ and $i = 1, \ldots, m+n$.

(2) For $\alpha \in \Delta^+$, the restriction of $(\,\cdot\,,\,\cdot\,)$ on {$\mathfrak{osp}(2m+1|2n)_{\alpha} \oplus \mathfrak{osp}(2m+1|2n)_{-\alpha}$} is non-degenerate. More precisely, $[x_{\alpha}, y_{\alpha}] = (x_{\alpha}, y_{\alpha})h_{\alpha}$ for {$x_{\alpha} \in \mathfrak{osp}(2m+1|2n)_{\alpha}$, $y_{\alpha} \in \mathfrak{osp}(2m+1|2n)_{-\alpha}$,} where $h_{\alpha}$ satisfies $(h_{\alpha}, h) = \varpi_i \langle h, \alpha \rangle$ for all $h \in \mathfrak{h}$.

(3) {$(\mathfrak{osp}(2m+1|2n)_{\alpha},\mathfrak{osp}(2m+1|2n)_{\beta}) = 0$,} if $\alpha \neq -\beta$.

\vspace{6pt}
Now, let us examine the affine Lie superalgebra $\mathfrak{osp}(2m+1|2n)^{(1)}$. As the notation in \cite{Vdl}, we have
\begin{gather*}
\mathfrak{osp}(2m+1|2n)^{(1)} = \left(\mathfrak{osp}(2m+1|2n)\otimes_{\mathbb{C}}[t,t^{-1}]\right)\oplus \mathbb{C}c\oplus \mathbb{C}d,
\end{gather*}
where the Cartan subalgebra is denoted as $\hat{\mathfrak{h}} = \mathfrak{h}\oplus \mathbb{C}c\oplus \mathbb{C}d$. Here, the element $c$ is central, and the element $d$ serves as the even superderivation, acting on $\mathfrak{osp}(2m+1|2n)^{(1)}$ according to
\begin{gather*}
[d,\,x\otimes t^r] = rx\otimes t^r
\end{gather*}
for homogeneous elements $x\in \mathfrak{osp}(2m+1|2n)$ and $r\in\mathbb{Z}$.
Furthermore, for $x,y\in \mathfrak{osp}(2m+1|2n)$, the commutator on {$(\mathfrak{osp}(2m+1|2n)\otimes_{\mathbb{C}}[t,t^{-1}])\otimes\mathbb{C}c$  }is defined as
\begin{gather*}
[x\otimes t^r,\,y\otimes t^s] = [x,\,y]\otimes t^{r+s} + \delta_{r,-s}r(x,y)c.
\end{gather*}

\vspace{6pt}
Consider $\delta\in\hat{\mathfrak{h}}^{\ast}$ as the linear function satisfying $\langle d, \delta \rangle=1$ and $\langle c, \delta \rangle = \langle \mathfrak{h}, \delta \rangle=0$. The set
\begin{gather*}
\{\alpha_0:=\delta-\theta, \alpha_1, \ldots, \alpha_{n+m}\}
\end{gather*}
denotes the affine root base $\hat{\Pi}$ of $\mathfrak{osp}(2m+1|2n)^{(1)}$.
The affine Cartan matrix $\hat{A}$ of standard $\mathfrak{osp}(2m+1|2n)^{(1)}$ is obtained by adjoining the 0th row and column such that
\begin{gather*}
A_{00}=-2, \quad A_{10}=2A_{01}=-2, \quad A_{j0}=A_{0j}=0\quad \text{for } 1<j\leqslant n+m.
\end{gather*}
In the context of $\mathfrak{osp}(2m+1|2n)^{(1)}$, we use the symbols $\widehat{\Delta}$, $\widehat{\Delta}_+$, and $\widehat{Q}$ to represent the affine root system, affine positive root system, and root lattice, respectively. {
Let $\underline{\Delta}_+$ denote the reduced root system obtained from the positive root system $\Delta_+$ of $\mathfrak{osp}(2m+1|2n)$ by excluding roots $\alpha$ such that $\alpha/2$ are odd roots. Here we use the standard notation of \cite{BCP}, then the reduced positive root system with multiplicity of $\mathfrak{osp}(2m+1|2n)^{(1)}$, denoted by $\widetilde{\underline{\Delta}}_+$, is given by
\begin{gather*}
\widetilde{\underline{\Delta}}_+=\widehat{\underline{\Delta}}^{\text{re}}_{>}\cup \widetilde{\Delta}^{\text{im}}\cup \widehat{\underline{\Delta}}^{\text{re}}_{<},
\end{gather*}
where $\widehat{\underline{\Delta}}^{\text{re}}_{>}=\{\,\alpha+k\delta\,|\,\alpha\in \underline{\Delta}_+,\ k\geqslant 0\,\}$, $\widehat{\Delta}^{\text{re}}_{<}=\{\,-\alpha+k\delta\,|\,\alpha\in \underline{\Delta}_+,\ k\geqslant 1\,\}$.
$\widetilde{\Delta}^{\text{im}}=\{\,k\delta\,|\,k>0\,\}\times I=\{k\delta^{(i)}|k>0,i\in I\}$, $I=\{1,\ldots,m+n\}$.}

\section{Quantum affine superalgebras for type $\mathfrak{osp}(2m+1|2n)$}
{
Let $q$ be a formal parameter, set
\begin{align*}
 q_i := q^{\frac{|(\alpha_i, \alpha_i)|}{2}}~\textrm{for}~(\alpha_i, \alpha_i)\neq0, \quad q_i := q~\textrm{for}~(\alpha_i, \alpha_i)=0,
\end{align*}where $i=0,1,..,n+m$, and for $ a \in \mathbb{Z}_+$,
\begin{equation*}
\begin{aligned}
&[a]_i=\frac{q_i^a - q_i^{-a}}{q_i - q_i^{-1}},\quad [a]_i! = [1]_i \cdots [a-1]_i [a]_i.
\end{aligned}
\end{equation*}}
For $\mu \in \mathbb{C}(q^{1/2})$, we define
$$[X,\,Y]_\mu = XY - (-1)^{[X][Y]}\mu YX \quad \text{for homogeneous elements } X,Y.$$
Then for $\omega,\nu \in \mathbb{C}(q^{1/2})$, it is straightforward to verify that
\begin{align*}
[[X,\,Y]_\mu,\,Z]_\omega &= [X,\,[Y,\,Z]_\nu]_{\mu\omega\nu^{-1}} + (-1)^{[Y][Z]}\nu[[X,\,Z]_{\omega\nu^{-1}},\,Y]_{\mu\nu^{-1}}, \\
[X,\,[Y,\,Z]_\mu]_\omega &= [[X,\,Y]_\nu,\,Z]_{\mu\omega\nu^{-1}} + (-1)^{[X][Y]}\nu[Y,\,[X,\,Z]_{\omega\nu^{-1}}]_{\mu\nu^{-1}}.
\end{align*}

\subsection{Drinfeld-Jimbo presentation $U_q[\mathfrak{osp}(2m+1|2n)^{(1)}]$}
Recall the Drinfeld-Jimbo presentation of the quantum affine superalgebra associated with $\mathfrak{osp}(2m+1|2n)^{(1)}$ {for the standard parity sequence,} as introduced by H. Yamane (\cite{HYAM}).

\begin{definition}\label{3.1}
The \textit{quantum affine superalgebra} $U_q[\mathfrak{osp}(2m+1|2n)^{(1)}]$  over $\mathbb{C}(q^{1/2})$ is an associative superalgebra of \textit{Chevalley generators}  $\chi_i^{\pm}:=\chi_{\alpha_i}^{\pm}$, $K_i:=K_{\alpha_i}$ $(i=0,1,...,N)$ with the parity of $[\chi_i^{\pm}]=[\alpha_i]$ and $[K_i]=0$, subject to the following relations.
\begin{align}\label{DJ1}
&K_i^{\pm 1}K_i^{\mp 1}=1,\quad K_iK_j=K_jK_i, \\ \label{DJ2}
&K_i\chi_j^{\pm}K_i^{-1}=q_i^{\pm A_{ij}}\chi_j^{\pm}, \\ \label{DJ3}
&[\chi_i^+,\,\chi_j^-]=\delta_{ij}\frac{K_i-K_i^{-1}}{q_i-q_i^{-1}}, \\ \label{DJ4}
&[\chi_i^{\pm},\,\chi_j^{\pm}]=0~~~~\textrm{for}~~A_{ij}=0, \\ \label{DJ5}
&[\![\chi_i^{\pm},[\![\chi_i^{\pm},\,\chi_{i+1}^{\pm}]\!] ]\!]=0~~~~\textrm{for}~~i\neq n,N, \\ \label{DJ6}
&[\![\chi_i^{\pm},[\![\chi_i^{\pm},\,\chi_{i-1}^{\pm}]\!] ]\!]=0~~~~\textrm{for}~~1<i<N,i\neq n, \\ \label{DJ7}
&[\![\chi_i^{\pm},\,[\![\chi_i^{\pm},[\![\chi_i^{\pm},\,\chi_{i-1}^{\pm}]\!] ]\!] ]\!]=0~~~~\textrm{for}~~i=1~~\textrm{or}~~N, \\ \label{DJ8}
&[ [\![ [\![\chi_{n-1}^{\pm},\,\chi_n^{\pm}]\!],\,\chi_{n+1}^{\pm}]\!],\,\chi_n^{\pm}]=0~~~~\textrm{for}~~n>1, \\ \label{DJ9}
&[ [\![ [\![ [\![ [\![ [\![\chi_3^{\pm},\,\chi_2^{\pm}]\!],\,\chi_1^{\pm}]\!],\,\chi_0^{\pm}]\!],\,\chi_1^{\pm}]\!],\,\chi_2^{\pm}]\!],\,\chi_1^{\pm}]=0
~~~~\textrm{for }n=1,m\geq 2, \\ \label{DJ10}
&[\![ [\![\chi_2^{\pm},\,\chi_1^{\pm}]\!],\,[\![ [\![\chi_2^{\pm},\,\chi_1^{\pm}]\!],\,[\![ [\![\chi_2^{\pm},\,\chi_1^{\pm}]\!],\,\chi_0^{\pm}]\!] ]\!] ]\!]\nonumber \\
&=(1-[2]_1)[\![ [\![ [\![\chi_2^{\pm},\,\chi_1^{\pm}]\!],\,[\![\chi_2^{\pm},\,[\![\chi_2^{\pm},\,[\![\chi_1^{\pm},\,\chi_0^{\pm}]\!] ]\!] ]\!] ]\!],\,\chi_1^{\pm}]\!]~~~~\textrm{for }(n,m)=(1,1),
\end{align}
where the notation
\begin{gather*}
 [\![X_{\alpha},\,X_{\beta}]\!]=[X_{\alpha},\,X_{\beta}]_{q^{-(\alpha,\beta)}}=X_{\alpha}X_{\beta}-(-1)^{[\alpha][\beta]}q^{-(\alpha,\beta)}X_{\beta}X_{\alpha}.
\end{gather*}

\end{definition}

Use the notation $K_{\lambda}=\prod_{i=0}^{n+m}\left(K_i\right)^{m_i}$ for $\lambda=\sum_{i=0}^{n+m}m_i\alpha_i$. The elements $K_{\delta}^{\pm\frac{1}{2}}=q^{\pm\frac{1}{2}c}$ are central in $U_q[\mathfrak{osp}(2m+1|2n)^{(1)}]$.

Let $U_q^{+}$ (resp., $U_q^{-}$) be the sub-superalgebra of $U_q[\mathfrak{osp}(2m+1|2n)^{(1)}]$ generated by $\chi_i^{+}$ (resp., $\chi_i^{-}$), and $U_q^{0}$ be the sub-superalgebra generated by $K_i$. {By \cite[Section 6]{HYAM}, we have the triangular decomposition:}
\begin{align*}
 U_q[\mathfrak{osp}(2m+1|2n)^{(1)}]\simeq U_q^{-}\otimes U_q^{0}\otimes U_q^{+}.
\end{align*}{Furthermore, quantum affine superalgebra $U_q(\mathfrak{\hat{g}})$
as a
Hopf superalgebra equipped with the comultiplication $\Delta$, counit $\varepsilon$, and antipode $S$ defined as follows:
\begin{align*}
  &\Delta(\chi_i^{+}) = \chi_i^{+}\otimes 1 + K_i\otimes\chi_i^{+},\quad
  \Delta(\chi_i^{-}) = \chi_i^{-}\otimes K_i^{-1} + 1\otimes\chi_i^{-}, \\
  & \Delta(K_i) = K_i\otimes K_i,\quad \varepsilon(\chi_i^{\pm}) = 0,\quad \varepsilon(K_i^{\pm}) = 1,\\
  &S(\chi_i^{+}) = K_i^{-1}\chi_i^{+},\quad S(\chi_i^{-}) = -\chi_i^{+}K_i, \quad S(K_i) = K_i^{-1}.
\end{align*}}Therefore, we can extend
the results in \cite[Section 1.2.13 \& Proposition 3.1.6]{Lusz2} to the super case as below:
\begin{lemma}\label{ri}
There exist two classes of linear maps ${}_ir$ (resp. $r_i$): $U_q^+ \rightarrow U_q^+$ for $i=0,1,\ldots,N$ such that
\begin{gather*}
{}_ir(1)=0, \quad {}_ir(\chi_j^+)=\delta_{ij} \quad \text{(resp. $r_i(1)=0$, $r_i(\chi_j^+)=\delta_{ij}$)}
\end{gather*}
and for homogeneous elements $X_1,X_2 \in U_q^+$ with $\mathbb{Z}_2$-gradings $[\beta_1],[\beta_2]$,
\begin{align*}
&{}_ir\left(X_1X_2\right) = (-1)^{[\beta_2][\alpha_i]}{}_ir\left(X_1\right)X_2
+ q^{-(\beta_1,\alpha_i)}X_1\,{}_ir\left(X_2\right), \\
&r_i\left(X_1X_2\right) = (-1)^{[\beta_2][\alpha_i]}q^{-(\beta_2,\alpha_i)}r_i\left(X_1\right)X_2
+ X_1\,r_i\left(X_2\right).
\end{align*}
Moreover, $[X,\,\chi_i^-]=\displaystyle\frac{r_i(X)K_i-K_i^{-1}\,{}_ir(X)}{q_i-q_i^{-1}}$ for $X \in U_q^+$ and $i=0,1,\ldots,N$.
\end{lemma}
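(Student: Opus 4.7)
The plan is to adapt the construction of Section 1.2.13 of Lusztig's book to the $\mathbb{Z}_2$-graded setting. First, I would introduce ${}_ir$ and $r_i$ as linear endomorphisms of the free associative superalgebra $\mathcal{F}$ on generators $\{\chi_j^+\}_{j=0}^{N}$ by setting ${}_ir(\chi_j^+)=r_i(\chi_j^+)=\delta_{ij}$ on generators and extending to arbitrary monomials by iterated application of the twisted super-Leibniz rules stated in the lemma. A straightforward induction on monomial length shows that both extensions are well defined on $\mathcal{F}$ and that the rules are mutually consistent regardless of the order in which a long product is split.

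The crucial step is then to show that ${}_ir$ and $r_i$ descend to $U_q^+$, i.e.\ that each defining relation (DJ4)--(DJ10) lies in their kernel. Relations (DJ4) and the quadratic relation built into (DJ3) are immediate from the Leibniz rule. The quantum Serre relations (DJ5)--(DJ7) reduce, after expanding the nested brackets $[\![\cdot,\cdot]\!]$ using the identities stated right after the definition of $[\,\cdot\,,\,\cdot\,]_\mu$, to a finite set of monomial cancellations of the same type as in the non-super type $B$ calculation, with each parity sign appearing as a consistent book-keeping symbol. The mixed relation (DJ8) associated to the isotropic node $\alpha_n$ is checked by applying ${}_ir$ for each value of $i\in\{n-1,n,n+1\}$ separately; the outputs collapse to multiples of the shorter relations $[\chi_{n-1}^\pm,\chi_{n+1}^\pm]=0$ already handled by (DJ4). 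The special long relations (DJ9) and (DJ10) are checked in the same spirit but require more patience: one applies ${}_ir$ (resp.\ $r_i$) to the left-hand side, uses the Leibniz rule repeatedly, and collects terms according to the weight $\beta-\alpha_i$ of the surviving contributions.

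Once ${}_ir, r_i:U_q^+\to U_q^+$ are established, the commutator formula is proved by induction on the length of $X$ as a monomial in the $\chi_j^+$. The base case $X=\chi_j^+$ is precisely relation (DJ3). For the inductive step write $X=X_1X_2$ with $X_i$ homogeneous of weight $\beta_i$ and apply the super-Leibniz identity
\begin{gather*}
[X_1X_2,\,\chi_i^-]=X_1[X_2,\,\chi_i^-]+(-1)^{[\beta_2][\alpha_i]}[X_1,\,\chi_i^-]X_2.
\end{gather*}
Substituting the inductive hypothesis and pushing the Cartan factors $K_i^{\pm 1}$ past $X_1$ and $X_2$ by means of the commutation rule derived from (DJ2) produces exactly $\bigl(r_i(X_1X_2)K_i-K_i^{-1}\,{}_ir(X_1X_2)\bigr)/(q_i-q_i^{-1})$, where the coefficients $q^{-(\beta_1,\alpha_i)}$ and $q^{-(\beta_2,\alpha_i)}$ and the parity sign $(-1)^{[\beta_2][\alpha_i]}$ reproduce the Leibniz rules for ${}_ir$ and $r_i$; this simultaneously verifies the formula and re-confirms the consistency of the definitions.

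The main obstacle is the verification that ${}_ir$ and $r_i$ annihilate the exceptional super-Serre relations (DJ9) and (DJ10): the interplay between the many nested brackets $[\![\cdot,\cdot]\!]$, the parity signs, and the exponents of $q$ must be tracked simultaneously, and a single misplaced sign or mis-commuted $q$-weight factor propagates through many terms. Everything else in the argument is structurally parallel to the non-super case and amounts to routine, if lengthy, computation.
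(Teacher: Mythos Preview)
The paper does not give its own proof of this lemma: it simply states that the construction in Lusztig's book (Section~1.2.13 and Proposition~3.1.6) extends to the super case, and records the resulting formulas. Your proposal is exactly a fleshed-out version of that extension and is structurally the same argument---define ${}_ir,r_i$ on the free superalgebra by the twisted Leibniz rules, check they kill the defining relations of $U_q^+$, and verify the commutator identity by induction on monomial length.

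One small correction: relation~(DJ3) is not a relation inside $U_q^+$ (it mixes $\chi_i^+$ and $\chi_j^-$), so there is nothing to check for it when showing ${}_ir,r_i$ descend; only (DJ4)--(DJ10) matter. You might also note that the direct verification for the long relations (DJ9)--(DJ10) can be shortcut: once the commutator formula is established at the level of the free algebra (your inductive step works there verbatim), the vanishing of ${}_ir(R)$ and $r_i(R)$ for each Serre-type relation $R$ is equivalent, via weight considerations, to $[R,\chi_i^-]=0$, and the latter is already part of Yamane's verification of the triangular decomposition cited just before the lemma. This avoids the term-by-term bookkeeping you flag as the main obstacle.
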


The next lemma states a well-known result.

\begin{lemma}\cite[Proposition 6.5.1]{HYAM}\label{useful}
    Let $X\in U_q^+$. If $[X, \chi_i^-]=0$ holds for $i=0,1,\ldots,N$, then $X$ must be zero.
\end{lemma}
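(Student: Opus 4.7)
The plan is to combine Lemma \ref{ri} with the triangular decomposition $U_q \simeq U_q^- \otimes U_q^0 \otimes U_q^+$ to reduce the statement to a non-degeneracy fact for the skew derivations $r_i, {}_ir$ on $U_q^+$, following the super-adaptation of \cite[Section 3.1.6]{Lusz2}.

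First, I would apply Lemma \ref{ri} to turn the hypothesis $[X,\chi_i^-]=0$ into
$$r_i(X)\,K_i \;=\; K_i^{-1}\,{}_ir(X), \qquad i=0,1,\ldots,N.$$
Without loss of generality, take $X$ homogeneous of weight $\beta$ (the case $\beta=0$ is trivial), so that $r_i(X)$ and ${}_ir(X)$ both lie in $U_q^+$ with weight $\beta-\alpha_i$. Normalizing each side to the canonical form $U_q^0\cdot U_q^+$ by commuting the Cartan element past the $U_q^+$-factor (which contributes only a scalar), the left-hand side has Cartan part proportional to $K_i$ while the right-hand side has Cartan part proportional to $K_i^{-1}$. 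Since $K_i$ and $K_i^{-1}$ are linearly independent basis elements of $U_q^0$ for generic $q$, the uniqueness of the triangular decomposition forces $r_i(X)=0$ and ${}_ir(X)=0$ for every $i$.

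Next, I would prove by induction on the height of $\beta$ that the joint kernel of $\{r_i\}_{i=0}^N$ on elements of $U_q^+$ of positive weight is zero. The base case $\beta=\alpha_j$ is immediate from $r_j(\chi_j^+)=1$. For the inductive step, one expresses $X$ as a linear combination of monomials in the $\chi_j^+$'s and applies the Leibniz-type rule in Lemma \ref{ri}, which strips off one generator from the right; equivalently, this encodes the non-degeneracy of the super-Drinfeld pairing between $U_q^+$ and $U_q^-$, under which $r_i$ corresponds to right multiplication by $\chi_i^-$ on $U_q^-$.

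The hard part will be ensuring that the argument survives the super-generalization, particularly at the isotropic odd simple root $\alpha_n$ (where $(\alpha_n,\alpha_n)=0$ and $q_n=q$). The sign factors $(-1)^{[\beta_2][\alpha_i]}$ permeating the super-Leibniz formulas must be tracked carefully, and one must verify that the super-Drinfeld pairing remains non-degenerate on each positive weight space despite the presence of isotropic odd roots; this technical point is addressed in \cite{HYAM}, whose framework we are working in.
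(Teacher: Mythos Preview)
The paper does not supply its own proof of this lemma; it merely cites \cite[Proposition 6.5.1]{HYAM} and states it as a well-known result. So there is no in-paper argument to compare against.

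Your outline is the standard one and is essentially what underlies Yamane's original proof: use Lemma~\ref{ri} to rewrite $[X,\chi_i^-]=0$ as $r_i(X)K_i=K_i^{-1}\,{}_ir(X)$, then invoke the triangular decomposition $U_q^-\otimes U_q^0\otimes U_q^+$ together with the linear independence of $K_i$ and $K_i^{-1}$ in $U_q^0$ to force $r_i(X)={}_ir(X)=0$, and finally appeal to the non-degeneracy of the super-Drinfeld pairing (equivalently, the statement that $\bigcap_i\ker r_i$ meets each positive-weight space of $U_q^+$ trivially). You are right to flag that the only delicate point is the survival of this non-degeneracy in the super setting, particularly at the isotropic odd node; this is exactly what \cite[Section~6]{HYAM} establishes, so deferring to that reference is appropriate here.
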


\begin{proposition}\label{anti-auto}
There exists a superalgebraic anti-automorphism $\Omega$ of $U_q[\mathfrak{osp}(2m+1|2n)^{(1)}]$, defined by
\begin{gather*}
  \chi_i^+ \mapsto d_{i+1}\chi_i^-,\quad \chi_i^- \mapsto d_{i+1}\chi_i^+,\quad K_{\alpha} \mapsto K_{\alpha}^{-1},\quad
  K_{\delta}^{\pm\frac{1}{2}} \mapsto K_{\delta}^{\mp\frac{1}{2}},\quad q \mapsto q^{-1}
\end{gather*}
for $d_i = (\varepsilon_i,\varepsilon_i)$, {where $i = 1,\ldots,N$, and $d_{N+1} = 1$.}
\end{proposition}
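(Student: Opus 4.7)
The plan is to extend $\Omega$ to a $\mathbb{C}$-linear anti-homomorphism $\Omega(xy)=\Omega(y)\Omega(x)$ of the free superalgebra on the Chevalley symbols, verify that each of the defining relations (DJ1)--(DJ10) is sent to a valid relation of $U_q[\mathfrak{osp}(2m+1|2n)^{(1)}]$, and finally check $\Omega^2=\mathrm{id}$ on generators. The latter is immediate from $d_{i+1}^2=1$ and $(q^{-1})^{-1}=q$, so once well-definedness is established, $\Omega$ is automatically an anti-automorphism.

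I would dispose of the easy relations first. Relation (DJ1) is immediate since $\Omega$ commutes with inversion and the $K_i$'s mutually commute. For (DJ2), applying $\Omega$ to $K_i\chi_j^\pm K_i^{-1}=q_i^{\pm A_{ij}}\chi_j^\pm$ and using $q\mapsto q^{-1}$ produces the $\mp$-version of the same identity. For (DJ3), a direct computation using $d_{i+1}^2=1$ together with $\Omega\!\bigl((K_i-K_i^{-1})/(q_i-q_i^{-1})\bigr)=(K_i-K_i^{-1})/(q_i-q_i^{-1})$ yields the $\mp$-version of (DJ3), since the factors $d_{i+1}d_{j+1}$ that arise on the left-hand side cancel ($\delta_{ij}$ kills the mixed term).

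The decisive step is the treatment of the bracket relations (DJ4)--(DJ10). The key identity, obtained by unwinding the definition of $[\![\cdot,\cdot]\!]$, is
\begin{equation*}
\Omega\bigl([\![X_\alpha^+,\,Y_\beta^+]\!]\bigr)
=-(-1)^{[\alpha][\beta]}\,q^{(\alpha,\beta)}\,[\![\,\Omega(X_\alpha^+),\,\Omega(Y_\beta^+)\,]\!],
\end{equation*}
valid for any weight homogeneous $X_\alpha^+,Y_\beta^+\in U_q^+$. Iterating this formula on a multiply nested bracket and collecting the accumulated scalar, one checks that the overall sign and the exponent of $q$ depend only on the multiset of simple roots occurring as leaves, not on how the brackets are arranged: both the sign exponent and the exponent of $q$ reduce, after binomial expansion of $[\alpha_i+\alpha_j]$ and $(\alpha_i+\alpha_j,\cdot)$, to symmetric sums $\sum_{i<j}[\alpha_i][\alpha_j]$ and $\sum_{i<j}(\alpha_i,\alpha_j)$ over the leaf labels. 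Consequently $\Omega$ sends any iterated $[\![\cdot,\cdot]\!]$-expression in the $\chi_i^+$ to a nonzero scalar multiple of the same expression in the $\chi_i^-$, with a scalar depending only on the total weight. This at once yields the preservation of (DJ4)--(DJ9), whose right-hand sides vanish.

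The only substantive obstacle is (DJ10), where both sides are nonzero. Here the crucial observation is that the two sides share exactly the same multiset of Chevalley symbols (three $\chi_2$'s, three $\chi_1$'s, and one $\chi_0$), so by the previous paragraph $\Omega$ multiplies each side by the \emph{same} scalar; moreover the coefficient $(1-[2]_1)$ is manifestly invariant under $q\mapsto q^{-1}$. Thus (DJ10) is sent to its $\mp$-version, completing the verification. The delicate part throughout is the careful bookkeeping of the parities, the signs $(-1)^{[\alpha][\beta]}$, and the $q$-powers as the deeply nested brackets are unwound, but no algebraic phenomenon beyond the invariance identity above is required.
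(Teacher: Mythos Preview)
The paper states this proposition without proof, so there is nothing to compare against; your argument supplies what the paper omits. Your key identity
\[
\Omega\bigl([\![X_\alpha,\,X_\beta]\!]\bigr)=-(-1)^{[\alpha][\beta]}q^{(\alpha,\beta)}[\![\Omega(X_\alpha),\,\Omega(X_\beta)]\!]
\]
is correct under the plain anti-homomorphism convention $\Omega(xy)=\Omega(y)\Omega(x)$, and your observation that the accumulated scalar from an iterated $[\![\cdot,\cdot]\!]$-bracket depends only on the multiset of leaves (via the symmetric sums $\sum_{i<j}[\alpha_i][\alpha_j]$ and $\sum_{i<j}(\alpha_i,\alpha_j)$, together with the number-of-brackets factor $(-1)^{n-1}$ and the product $\prod d_{i+1}$ of the leaf scalars) is sound. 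This cleanly handles (DJ4)--(DJ9) and, since both sides of (DJ10) share the same leaf multiset and the coefficient $1-[2]_1$ is $q\leftrightarrow q^{-1}$ invariant, disposes of (DJ10) as well.

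Two small points worth making explicit in a final write-up: first, you should state which anti-homomorphism convention you are using (plain versus signed $\Omega(xy)=(-1)^{[x][y]}\Omega(y)\Omega(x)$), since in super settings both occur and the computation is sensitive to the choice; second, the index $i=0$ is included among the Chevalley generators (with $\chi_0^+\mapsto d_1\chi_0^-$), and it is worth remarking that $[\alpha_0]=0$ in the standard parity, so the sign bookkeeping in the Serre relations involving $\chi_0^\pm$ is unaffected.
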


For $\alpha=\sum_im_i\alpha_i\in Q$, we define the support of $\alpha$ by
\begin{gather*}
 \operatorname{supp}\alpha=\{i\in\{1,...,N\}|m_i\neq0\}.
\end{gather*}
And on $\widetilde{\underline{\Delta}}_+=\widehat{\underline{\Delta}}^{\text{re}}_{>}\cup \widetilde{\Delta}^{\text{im}}\cup \widehat{\underline{\Delta}}^{\text{re}}_{<}$, we define an order ``$\prec$" as below:

(O1) $\alpha_0\prec\alpha_1\prec\cdots\prec\alpha_{N}$;

(O2) $\delta^{(N)}\prec 2\delta^{(1)}$, and for $k>0$, $k\delta^{(1)}\prec k\delta^{(2)}\prec \cdots \prec k\delta^{(N)}$;

(O3) $\alpha\prec \gamma \prec \beta$, if $\gamma=\alpha+\beta\in \widetilde{\underline{\Delta}}_+$ and $\alpha\neq \beta$;

(O4) {$\beta\prec \alpha$, if $\alpha,\beta\in \widetilde{\underline{\Delta}}_+\setminus \widetilde{\Delta}^{\text{im}}$,} $\alpha-\beta\notin\widetilde{\underline{\Delta}}_+$ and $\alpha-\beta\in Q_+$.

Now, we introduce the quantum root vectors on $\widetilde{\underline{\Delta}}$ as below:

\begin{definition}\label{q-roots}
We define the \textit{quantum root vectors} $E_{\alpha},F_{\alpha}$ for $\alpha\in\widetilde{\underline{\Delta}}_+$ in $U_q[\mathfrak{osp}(2m+1|2n)^{(1)}]$ according to the following rules ($i=1,\ldots,N$),

\vspace{6pt}
(E1)\ $E_{\alpha_i}:=\chi_i^+$, $F_{\alpha_i}:=d_{i+1}\chi_i^-$, $E_{\delta-\theta}:=\chi_0^+$, $F_{\delta-\theta}:=d_1\chi_0^-$.

(E2)\ If $\beta,\alpha_i+\beta\in \Delta_+\cup(\delta-\Delta_+)$, $\beta\prec\alpha_i$ and $\operatorname{supp}\alpha_i\cap \operatorname{supp}\beta=\varnothing$, we define
\begin{gather*}
  E_{\alpha_i+\beta}:=q^{(\alpha_i,\beta)}\frac{q_i-q_i^{-1}}{q^{(\alpha_i,\beta)}
  -q^{-(\alpha_i,\beta)}}[\![E_{\alpha_i},\,E_{\beta}]\!],
\end{gather*}

  (E3)\ If $\beta,\alpha_i+\beta\in \Delta_+\cup(\delta-\Delta_+)$, $\beta\prec\alpha_i$ and $\operatorname{supp}\alpha_i\cap \operatorname{supp}\beta\neq\varnothing$, we define
  $$E_{\alpha_i+\beta}:=(-1)^{[\alpha_i]}q^{(\alpha_i,\beta)}\frac{q_i-q_i^{-1}}{q^{(\alpha_i,\beta)}
  -q^{-(\alpha_i,\beta)}}[\![E_{\alpha_i},\,E_{\beta}]\!],$$
for $\beta\neq\delta-\alpha_1-...-\alpha_{N}$, and
 \begin{gather*}
  E_{\alpha_{N}+\beta}:=(-1)^{[\alpha_{N}]}q^{(\alpha_{N},\beta-\alpha_{N})}\frac{q_{N}-q_{N}^{-1}}{q^{(\alpha_{N},\beta-\alpha_{N})}
  -q^{-(\alpha_{N},\beta-\alpha_{N})}}[\![E_{\alpha_{N}},\,E_{\beta}]\!],
\end{gather*}for $\beta=\delta-\alpha_1-...-\alpha_{N}$.

  (E4)\ Set
  \begin{gather*}
  \exp\left((q_i-q_i^{-1})\sum E_{\delta^{(i)}}z^{-1}\right):=1+(q_i-q_i^{-1})\sum\widetilde{E}_{\delta^{(i)}}z^{-1},
  \end{gather*}
  where $\widetilde{E}_{\delta^{(i)}}:=q^{-(\alpha_i,\alpha_i)}[\![E_{\alpha_i},\,E_{\delta-\alpha_i}]\!]$. Then, if $\beta=\sum_{s=1}^lk_{j_s}\alpha_{j_s}\in\Delta_+$ with $k_{j_1},\ldots,k_{j_l}\neq 0$, we define
      \begin{gather*}
      E_{r\delta+\beta}=b_{\beta}\,[E_{\delta^{(i)}},\,E_{(r-1)\delta+\beta}],\quad
      E_{(r+1)\delta-\beta}=-b_{\beta}\,[E_{\delta^{(i)}},\,E_{r\delta-\beta}],
      \end{gather*}
      and the index $i$ is determined by
      \begin{equation*}
      i=\begin{cases}
      2, &\text{if }n=1,\beta=\alpha_1, \\
      j_1, &\text{if }n>1, j_1=n+1,1, \\
      j_1-1, &\text{if }n=1\text{ and }j_1>2\text{ or }n>1\text{ and } j_1\neq n+1.
      \end{cases}
      \end{equation*}
      and the coefficient $b_{\beta}$ is determined by
      \begin{gather}\label{eq:EF1}
      [E_{\delta+\beta},\,F_{\delta+\beta}]=\frac{K_{\delta+\beta}-K_{\delta+\beta}^{-1}}{q_{\beta}-q_{\beta}^{-1}}\quad \text{for }q_{\beta}:=q^{|(\beta,\beta)|/2},
      \end{gather}
     except for $\beta \in \Pi$, and notably, $b_{\alpha_j}=[A_{i,j}]_{i}^{-1}$.

    (E5)\ For $r>1$, we define the quantum imaginary root vectors $E_{r\delta^{(i)}}$ by the formal power series
  \begin{gather*}
  \exp\left((q_i-q_i^{-1})\sum_{r>1}E_{r\delta^{(i)}}z^{-r}\right):=1+(q_i-q_i^{-1})\sum_{r>1}\widetilde{E}_{r\delta^{(i)}}z^{-r},
  \end{gather*}
  where $\widetilde{E}_{r\delta^{(i)}}:=q^{-(\alpha_i,\alpha_i)}[\![E_{\alpha_i},\,E_{r\delta-\alpha_i}]\!]$.

  (E6)\ $F_{\alpha}=\Omega(E_{\alpha})$.
\end{definition}
Then $U_q^+$ is generated by $E_{\alpha_i}$($i=1,\ldots,N$) and $E_{\delta-\theta}$. Moreover, all elements $E_{\alpha}$($\alpha\in\widehat{\Delta}_+$) are included in $U_q^+$.

\begin{example}\label{Examp} Consider the low rank case of $\mathfrak{osp}(3|2)$, then the reduced roots are
\begin{gather*}
  \alpha_1\prec\alpha_1+\alpha_2\prec\alpha_1+2\alpha_2\prec\alpha_2.
\end{gather*}According the definition of (E1)-(E3), the quantum root vectors of $U_q[\mathfrak{osp}(3|2)^{(1)}]$ as follows
\begin{align*}
&E_{\alpha_1},\quad E_{\alpha_2},\quad E_{\delta-\theta},\\
  &E_{\alpha_1+\alpha_2}:=q^{(\alpha_2,\alpha_1)}\frac{q_2-q_2^{-1}}{q^{(\alpha_2,\alpha_1)}-
  q^{-(\alpha_2,\alpha_1)}}[\![E_{\alpha_2},\,E_{\alpha_1}]\!], \\
  &E_{\alpha_1+2\alpha_2}:=(-1)^{[\alpha_2]}q^{(\alpha_2,\alpha_1+\alpha_2)}\frac{q_2-q_2^{-1}}{q^{(\alpha_2,\alpha_1+\alpha_2)}-
  q^{-(\alpha_2,\alpha_1+\alpha_2)}}[\![E_{\alpha_2},\,E_{\alpha_1+\alpha_2}]\!],\\
  &E_{\alpha_0+\alpha_1}=E_{\delta-\alpha_1-2\alpha_2}:=q^{(\alpha_1,\alpha_0)}\frac{q_1-q_1^{-1}}{q^{(\alpha_1,\alpha_0)}-
  q^{-(\alpha_1,\alpha_0)}}[\![E_{\alpha_1},\,E_{\delta-\theta}]\!], \\
  &E_{\alpha_0+\alpha_1+\alpha_2}=E_{\delta-\alpha_1-\alpha_2}:=q^{(\alpha_2,\alpha_0+\alpha_1)}\frac{q_2-q_2^{-1}}{q^{(\alpha_2,\alpha_0+\alpha_1)}-
  q^{-(\alpha_2,\alpha_0+\alpha_1)}}[\![E_{\alpha_2},\,E_{\delta-\alpha_1-2\alpha_2}]\!], \\
  &E_{\alpha_0+\alpha_1+2\alpha_2}=E_{\delta-\alpha_1}:=(-1)^{[\alpha_2]}q^{(\alpha_2,\alpha_0+\alpha_1)}\frac{q_2-q_2^{-1}}{q^{(\alpha_2,\alpha_0+\alpha_1)}-
  q^{-(\alpha_2,\alpha_0+\alpha_1)}}[\![E_{\alpha_2},\,E_{\delta-\alpha_1-\alpha_2}]\!], \\
  &E_{\alpha_0+2\alpha_1+\alpha_2}=E_{\delta-\alpha_2}:=(-1)^{[\alpha_1]}q^{(\alpha_1,\alpha_0+\alpha_1+\alpha_2)}\frac{q_1-q_1^{-1}}{q^{(\alpha_1,\alpha_0+\alpha_1+\alpha_2)}-
  q^{-(\alpha_1,\alpha_0+\alpha_1+\alpha_2)}}[\![E_{\alpha_1},\,E_{\delta-\alpha_1-\alpha_2}]\!].
\end{align*}While the other quantum root vectors can be defined by induction from above root vectors via (E4)-(E6).
\end{example}

\subsection{Drinfeld presentation $\mathcal{U}_q[\mathfrak{osp}(2m+1|2n)^{(1)}]$}

To construct the Drinfeld presentation of quantum affine superalgebras for type $\mathfrak{osp}(2m+1|2n)$ ($m\geqslant 1,\ n\geqslant 0$) in standard parity, we present the new superalgebra in terms of current generators as given in Definition \ref{Drin-pre}. A similar definition for any parity in this type is provided by Bezerra, Futorny and Kashuba; for more details, see \cite{Luan}.
\begin{definition}\label{Drin-pre}
The superalgebra $\mathcal{U}_q[\mathfrak{osp}(2m+1|2n)^{(1)}]$ over $\mathbb{C}(q^{1/2})$ is an associative superalgebra generated by the elements $x_{i,s}^{\pm}$, $a_{i,r}$, $k_i^{\pm 1}(i=1,...,N, s\in\mathbb{Z}, r\in\mathbb{Z}\backslash\{0\}$), and the central element $q^{\pm \frac{1}{2}c}$, with the following defining relations. The parity of generators $x_{i,s}^{\pm}$ is denoted by $[x_{i,s}^{\pm}]=[\alpha_i]$, while all other generators have parity 0.
\begin{align}\label{DE1}
&q^{\pm\frac{1}{2}c}q^{\mp\frac{1}{2}c}=k_i^{\pm 1}k_i^{\mp 1}=1, \quad k_ik_j=k_jk_i, \\ \label{DE2}
 &k_ia_{j,r}=a_{j,r}k_i, \quad k_ix_{j,s}^{\pm}k_i^{-1}=q_i^{\pm A_{ij}}x_{j,s}^{\pm}, \\ \label{DE3}
&[a_{i,r},\,a_{j,s}]=\delta_{r,-s}\frac{[r A_{ij}]_{i}}{r}\cdot\frac{q^{r c}-q^{-r c}}{q_j-q_j^{-1}}, \\ \label{DE4}
&[a_{i,r},\,x_{j,s}^{\pm}]=\pm\frac{[r A_{ij}]_{i}}{r}q^{\mp\frac{|r|}{2}c}x_{j,r+s}^{\pm}, \\ \label{DE5}
&[x_{i,s}^+,\,x_{j,l}^-]=\delta_{ij}\frac{q^{\frac{s-l}{2}c}\Phi_{i,s+l}^+-q^{\frac{l-s}{2}c}\Phi_{i,s+l}^-}{q_i-q_i^{-1}}, \\ \label{DE6}
&[\![x_{i,s+1}^{\pm},\,x_{j,l}^{\pm}]\!]+[\![x_{j,l+1}^{\pm},\,x_{i,s}^{\pm}]\!]=0, \\ \label{DE7}
&[x_{i,s}^{\pm},\,x_{j,l}^{\pm}]=0,~~~~\textrm{if}~~A_{ij}=0, \\ \label{DE8}
&\operatorname{Sym}_{s_1,s_2}[\![x_{i,s_1}^{\pm},[\![x_{i,s_2}^{\pm},\,x_{i+p,l}^{\pm}]\!] ]\!]=0~~~~\textrm{for}~~i\neq n,N,\ p=\pm 1, \\ \label{DE9}
&\operatorname{Sym}_{s_1,s_2,s_3}[\![x_{N,s_1}^{\pm},\,[\![x_{N,s_2}^{\pm},[\![x_{N,s_3}^{\pm},\,x_{N-1,l}^{\pm}]\!] ]\!] ]\!]=0, \\ \label{DE10}
&\operatorname{Sym}_{l_1,l_2}[ [\![ [\![x_{n-1,s_1}^{\pm},\,x_{n,l_1}^{\pm}]\!],\,x_{n+1,s_2}^{\pm}]\!],\,x_{n,l_2}^{\pm}]=0
~~~~\textrm{for}~~n>1,
\end{align}
where $\Phi_{i,\pm r}^{\pm}$($r\geqslant 0$) is given by the formal power series
\begin{gather}\label{DE11}
    \sum\limits_{r\geqslant 0}\Phi_{i,\pm r}^{\pm}z^{\mp r}:=k_i^{\pm 1}\exp\left(\pm (q_i-q_i^{-1})\sum\limits_{r>0}a_{i,\pm r}z^{\mp r}\right).
\end{gather}
\end{definition}

Given the aforementioned definition of superalgebras, we hereby state the main theorem as follows.

\begin{theorem}\label{Main1}
There exists an isomorphism $\Psi$ of superalgebras between  $\mathcal{U}_q[\mathfrak{osp}(2m+1|2n)^{(1)}]$ and the Drinfeld-Jimbo presentation $U_q[\mathfrak{osp}(2m+1|2n)^{(1)}]$. This isomorphism is defined by the following map:
\begin{align*}
&q^{\pm\frac{1}{2}c} \mapsto K_{\delta}^{\pm\frac{1}{2}},\quad k_i^{\pm 1} \mapsto K_i^{\pm 1},\quad x_{i,0}^{\pm} \mapsto \chi_i^{\pm}, \\
&x_{i,r}^+ \mapsto E_{r\delta+\alpha_i},\quad x_{i,-r}^+ \mapsto (-d_i)^rd_{i+1}F_{r\delta-\alpha_i}K_{\delta}^{r}K_i^{-1}, \\
&x_{i,r}^- \mapsto K_{\delta}^{-r}K_iE_{r\delta-\alpha_i},\quad x_{i,-r}^- \mapsto (-d_i)^rd_{i+1}F_{r\delta+\alpha_i}, \\
&a_{i,r} \mapsto K_{\delta}^{-\frac{r}{2}}E_{r\delta^{(i)}},\quad a_{i,-r} \mapsto (-d_i)^rK_{\delta}^{\frac{r}{2}}F_{r\delta^{(i)}},
\end{align*}where $i=1,\ldots,N$.
\end{theorem}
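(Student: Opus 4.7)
The plan is to follow the \emph{minimal presentation} strategy of Lin--Yamane--Zhang \cite{LYZ}, itself adapted from Levendorskii's approach for Yangians. I would introduce an intermediate superalgebra $\mathcal{U}'$ generated by $x_{i,0}^{\pm}, x_{i,\pm 1}^{\pm}, a_{i,\pm 1}, k_i^{\pm 1}, q^{\pm c/2}$ subject to only a finite subset of the relations \eqref{DE1}--\eqref{DE10} (the ones of degree $\le 1$ in the grading by loop index). The target picture is a factorization
\[
\mathcal{U}_q[\mathfrak{osp}(2m+1|2n)^{(1)}] \xrightarrow{\pi} \mathcal{U}' \xrightarrow{\Psi'} U_q[\mathfrak{osp}(2m+1|2n)^{(1)}],
\]
where $\pi$ is the obvious projection, $\Psi'$ is defined on the minimal generators via the quantum root vectors of Definition \ref{q-roots}, and $\Psi = \Psi' \circ \pi$. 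The task reduces to showing that (i) $\pi$ is an isomorphism, (ii) $\Psi'$ is a well-defined surjection, and (iii) $\Psi$ is injective.

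For (i), the idea is that relations \eqref{DE4} allow one to reconstruct all $x_{i,r}^{\pm}, a_{i,r}$ inductively from the degree-$0,\pm 1$ generators by bracketing with $a_{i,\pm 1}$, and then the higher relations of \eqref{DE5}--\eqref{DE10} are forced by Jacobi-type identities from the low-degree ones; this is exactly the technical content of the LYZ-style argument and should transfer to the $\mathfrak{osp}$ case with the same formal manipulations. For (ii), the Cartan-type relations \eqref{DE1}--\eqref{DE2} are immediate from the inductive definition of the quantum root vectors and the action of the $K_\alpha$'s on them. The relation \eqref{DE5} in low degree reduces, via Lemma \ref{ri}, to computing $r_i$ and ${}_ir$ on $E_{\delta \pm \alpha_j}$, which is a direct but finite calculation. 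The Serre-type relations \eqref{DE6}--\eqref{DE10} are where Section 4 does the heavy lifting: Lusztig's braid operators $T_i$ (and $T_\omega$ for an appropriate affine Weyl group element) translate brackets involving the imaginary direction $\delta - \alpha_j$ into classical Serre-type identities \eqref{DJ5}--\eqref{DJ10} inside $U_q[\mathfrak{osp}(2m+1|2n)^{(1)}]$. Surjectivity of $\Psi'$ is free from the remark after Definition \ref{q-roots} combined with $\Omega$ giving the negative side.

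The main obstacle will be verifying \eqref{DE9} and \eqref{DE10}, the cubic and quartic Serre relations involving the short root $\alpha_N$ and the odd isotropic root $\alpha_n$, respectively. These are the places where the odd/even sign conventions in (E2)--(E3) interact nontrivially with the braid action, and where the coefficients $b_\beta$ fixed by \eqref{eq:EF1} must be shown to be compatible with the normalizations produced by $T_i$. I would attack this by computing $T_N(E_{\alpha_N})$ and $T_n(E_{\alpha_n})$ on the generators $\chi_i^{\pm}$ explicitly in low rank (e.g.\ $\mathfrak{osp}(3|2)$, as in Example \ref{Examp}, and $\mathfrak{osp}(5|2)$), checking the sign factor $(-1)^{[\alpha_N]}$ in \eqref{DJ8}--\eqref{DJ10} arises naturally, and then invoking the rank-reduction lemmas of Section 4 to pass to general $m,n$.

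Finally, for (iii), both algebras admit triangular decompositions with analogous $U^\pm \otimes U^0$ factorizations, and $\Psi$ is manifestly compatible with the $\widehat{Q}$-grading. Since the quantum root vectors $\{E_\alpha\}_{\alpha \in \widetilde{\underline{\Delta}}_+}$ span $U_q^+$ and the Drinfeld currents span $\mathcal{U}_q^+$ by a known PBW-type argument, comparing graded dimensions in each weight space forces $\Psi$ to be bijective, completing the proof.
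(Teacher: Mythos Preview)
Your overall architecture---factor through a minimal presentation $\mathcal{U}'$ following \cite{LYZ}---is exactly the route the paper takes, and your steps (i) and (ii) match the paper's Theorem \ref{the5.1}(a) and half of (b) reasonably well. The paper's $\mathcal{U}'$ is in fact even leaner than yours (only $x_{i,0}^{\pm}, x_{i,1}^-, x_{i,-1}^+, k_i^{\pm1}, q^{\pm c/2}$, with $a_{i,\pm1}$ and $x_{i,\pm1}^{\pm}$ defined from these), and it carries one extra relation \eqref{S9}, a degree-three commutator identity $[a_{i,\pm1},[[a_{i,\pm1},x_{i,0}^{\pm}]_{q_i^{A_{ii}}},x_{i,\pm1}^{\mp}]_{q_i^{-A_{ii}}}]=0$ that you do not mention; this is precisely what is needed to force $[a_{i,\pm1},a_{i,\pm2}]=0$ in the bootstrap argument, and without it your step (i) will not close.

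The genuine gap is your step (iii). You propose to deduce injectivity by comparing graded dimensions, assuming PBW-type bases on both sides. But a PBW basis for the Drinfeld presentation $\mathcal{U}_q[\mathfrak{osp}(2m+1|2n)^{(1)}]$ is \emph{not} available a priori; in the literature such bases are typically established \emph{after} (and often \emph{using}) the isomorphism with the Drinfeld--Jimbo side, so invoking one here is circular. Spanning alone---which is all you actually assert---gives only an inequality of dimensions, not equality. The paper avoids this entirely: instead of a dimension count, it constructs an explicit inverse homomorphism $\Psi_2:U_q\to\mathcal{U}'$ by sending $\chi_0^{\pm}$ to a specific iterated $q$-commutator $\nu_0^{\pm}\Psi_2(K_0^{\pm1})[\![x_{1,\pm1}^{\mp},x_{2,0}^{\mp},\ldots,x_{N,0}^{\mp},x_{N,0}^{\mp},\ldots,x_{1,0}^{\mp}]\!]_r$, verifies by direct (lengthy) computation that $\Psi_2$ respects all Drinfeld--Jimbo relations including the exotic Serre relations \eqref{DJ9}--\eqref{DJ10}, and then checks $\Psi_1\circ\Psi_2=\mathrm{id}$ and $\Psi_2\circ\Psi_1=\mathrm{id}$ on generators. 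This two-sided-inverse argument is what replaces your PBW step and is the substantive content you are missing.
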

We will provide a proof of this theorem in Section 5.

\section{Braid group action on Chevalley generators}

{ In this section, we investigate the braid group action on Drinfeld-Jimbo presentation $U_q[\mathfrak{osp}(2m+1|2n)^{(1)}]$ and present some related results.  Following \cite[Section 4]{Luan}, we will describe the Dynkin diagrams of type $\mathfrak{osp}(2m+1|2n)^{(1)}$. Here, the white node
\begin{tikzpicture}
    \draw (0.22,0) circle (0.22);
\end{tikzpicture}
means the corresponding simple root $\alpha$ is even; the gray node
\begin{tikzpicture}
    \draw (0.22,0) circle (0.22);
    \draw (0.06,0.16)--(0.38,-0.16)  (0.06,-0.16)--(0.38,0.16);
\end{tikzpicture}
means the corresponding simple root $\alpha$ is odd and $(\alpha,\alpha)=0$; while the black node
\begin{tikzpicture}
    \draw[fill=black] (0.22,0) circle (0.22);
\end{tikzpicture}
means the corresponding simple root $\alpha$ is odd and $(\alpha,\alpha)\neq 0$.

\subsection{Odd reflections and Dynkin diagrams}

Let $r_{\alpha}$ be the reflection of $\mathfrak{osp}(2m+1|2n)^{(1)}$ corresponding to a root $\alpha\in \widehat{\Delta}$. In contrast to the even cases, not all simple reflections preserve the Dynkin diagram of $\mathfrak{osp}(2m+1|2n)^{(1)}$. Such unusual reflections are termed \textit{odd reflections}. If $\alpha$ is an odd root such that $(\alpha,\alpha)=0$, $r_{\alpha}$ is an odd reflection which acts on $\widehat{\Delta}$ described as: for $\beta\in \widehat{\Delta}$,
\begin{equation*}
     r_{\alpha}(\beta)=\begin{cases}
         -\alpha,   &\textrm{if}~~\beta=\alpha, \\
         \alpha+\beta,    &\textrm{if}~~(\alpha,\beta)\neq 0, \\
         \beta,   &\textrm{other cases.}
     \end{cases}
\end{equation*}
Otherwise, the action of $r_{\alpha}$ on $\widehat{\Delta}$ is consistent with that in the even case. For simplicity, set $r_i:=r_{\alpha_i}$. Define recursively,
\begin{align*}
    \omega_1&=r_0r_1r_2\cdots r_{N-1}r_N\cdots r_1, \\
    \omega_2&=r_1\omega_1 r_1\omega_1, \\
    \omega_{i+1}&=r_i\omega_ir_i\omega_i\omega_{i-1}^{-1},\quad i=2,\ldots,N-1.
\end{align*}
We find that each $\omega_i$ preserve the Dynkin diagram and $\omega_i(\alpha_j)=\alpha_j-\delta_{ij}\delta$.
\begin{example}
    Consider the affine Lie superalgebra $\mathfrak{osp}(3|2)^{(1)}$. In this case, $N=2$. Figure 1 illustrates the actions of $w_1$ and odd reflections on the Dynkin diagram of $\mathfrak{osp}(3|2)^{(1)}$.
    \begin{center}
    \begin{tikzpicture}
      \draw (0.22,0) circle (0.22);
      \node[above] at (0.22,0.4) {$0$};
      \draw[double, double distance=3pt] (0.44,0) -- (1.1,0);
      \draw[-{To[scale=2, line width=0.4pt]}, line width=0.4pt] (1.2,0) -- ++(0.01,0);
      \draw (1.46,0) circle (0.22);
      \node[above] at (1.46,0.4) {$1$};
      \draw (1.30,-0.16) -- (1.62,0.16) (1.3,0.16) -- (1.62,-0.16);
      \draw[double, double distance=3pt] (1.68,0) -- (2.34,0);
      \draw[-{To[scale=2, line width=0.4pt]}, line width=0.4pt] (2.44,0) -- ++(0.01,0);
      \draw (2.70,0) circle (0.22);
      \node[above] at (2.70,0.4) {$2$};
      \draw[-To] (3.60,0) -- (5.60,0);
      \node at (4.60,0.2) {$r_1$};
      \draw (6.84,0.6) circle (0.22) (6.68,0.44) -- (7.00,0.76) (6.68,0.76) -- (7.00,0.44);
      \node at (6.32,0.6) {$0$};
      \draw (6.84,-0.6) circle (0.22) (6.68,-0.44) -- (7.00,-0.76) (6.68,-0.76) -- (7.00,-0.44);
      \node at (6.32,-0.6) {$1$};
      \draw[dash pattern=on 3pt off 2pt]  (6.94,0.38)--(6.94,-0.38) (6.74,0.38)--(6.74,-0.38); 
      \draw[double, double distance=3pt] (7.05,0.51) -- (7.70,0.2);
      \pgfmathsetmacro{\arrowLength}{0.01}
      \pgfmathsetmacro{\angle}{-25.5}
      \pgfmathsetmacro{\endXone}{7.78 + \arrowLength * cos(\angle)}
      \pgfmathsetmacro{\endYone}{0.16 + \arrowLength * sin(\angle)}
      \draw[-{To[scale=2, line width=0.4pt]}, line width=0.4pt] (7.78,0.16) -- (\endXone,\endYone);
      \draw[double, double distance=3pt] (7.05,-0.51) -- (7.70,-0.2);
      \pgfmathsetmacro{\arrowLength}{0.01}
      \pgfmathsetmacro{\angle}{25.5}
      \pgfmathsetmacro{\endXtwo}{7.78 + \arrowLength * cos(\angle)}
      \pgfmathsetmacro{\endYtwo}{-0.16 + \arrowLength * sin(\angle)}
      \draw[-{To[scale=2, line width=0.4pt]}, line width=0.4pt] (7.78,-0.16) -- (\endXtwo,\endYtwo);
    \draw[fill=black] (8.02,0) circle (0.22);
    \node at (8.42,0.4) {$2$};
      \draw[-To] (7.20,-1.00) -- (6.00,-1.80);
      \node at (6.90,-1.55) {$r_2$};
      \draw (4.34,-1.40) circle (0.22) (4.18,-1.56) -- (4.50,-1.24) (4.18,-1.24) -- (4.50,-1.56);
      \node at (3.82,-1.40) {$0$};
      \draw (4.34,-2.6) circle (0.22) (4.18,-2.44) -- (4.50,-2.76) (4.18,-2.76) -- (4.50,-2.44);
      \node at (3.82,-2.6) {$1$};
      \draw[dash pattern=on 3pt off 2pt]  (4.44,-1.62)--(4.44,-2.38) (4.24,-1.62)--(4.24,-2.38); 
      \draw[double, double distance=3pt] (4.55,-1.49) -- (5.20,-1.80);
      \pgfmathsetmacro{\arrowLength}{0.01}
      \pgfmathsetmacro{\angle}{-25.5}
      \pgfmathsetmacro{\endXone}{5.28 + \arrowLength * cos(\angle)}
      \pgfmathsetmacro{\endYone}{-1.84 + \arrowLength * sin(\angle)}
      \draw[-{To[scale=2, line width=0.4pt]}, line width=0.4pt] (5.28,-1.84) -- (\endXone,\endYone);
      \draw[double, double distance=3pt] (4.55,-2.51) -- (5.20,-2.2);
      \pgfmathsetmacro{\arrowLength}{0.01}
      \pgfmathsetmacro{\angle}{25.5}
      \pgfmathsetmacro{\endXtwo}{5.28 + \arrowLength * cos(\angle)}
      \pgfmathsetmacro{\endYtwo}{-2.16 + \arrowLength * sin(\angle)}
      \draw[-{To[scale=2, line width=0.4pt]}, line width=0.4pt] (5.28,-2.16) -- (\endXtwo,\endYtwo);
    \draw[fill=black] (5.52,-2) circle (0.22);
    \node at (5.92,-2.40) {$2$};
      \draw[-To] (3.20,-2.20) -- (2.00,-3.00);
      \node at (2.4,-2.4) {$r_1$};
      \draw (0.22,-4.2) circle (0.22);
      \node[above] at (0.22,-3.80) {$0$};
      \draw[double, double distance=3pt] (0.44,-4.2) -- (1.1,-4.2);
      \draw[-{To[scale=2, line width=0.4pt]}, line width=0.4pt] (1.2,-4.2) -- ++(0.01,0);
      \draw (1.46,-4.2) circle (0.22);
      \node[above] at (1.46,-3.80) {$1$};
      \draw (1.30,-4.36) -- (1.62,-4.04) (1.3,-4.04) -- (1.62,-4.36);
      \draw[double, double distance=3pt] (1.68,-4.2) -- (2.34,-4.2);
      \draw[-{To[scale=2, line width=0.4pt]}, line width=0.4pt] (2.44,-4.2) -- ++(0.01,0);
      \draw (2.70,-4.2) circle (0.22);
      \node[above] at (2.70,-3.80) {$2$};
      \draw[-To] (3.60,-4.2) -- (5.60,-4.2);
      \node at (4.60,-4.40) {$r_0$};
      \draw (6.52,-4.2) circle (0.22);
      \node[above] at (6.52,-3.80) {$0$};
      \draw[double, double distance=3pt] (6.74,-4.2) -- (7.40,-4.2);
      \draw[-{To[scale=2, line width=0.4pt]}, line width=0.4pt] (7.50,-4.2) -- ++(0.01,0);
      \draw (7.76,-4.2) circle (0.22);
      \node[above] at (7.76,-3.80) {$1$};
      \draw (7.60,-4.36) -- (7.92,-4.04) (7.60,-4.04) -- (7.92,-4.36);
      \draw[double, double distance=3pt] (7.98,-4.2) -- (8.64,-4.2);
      \draw[-{To[scale=2, line width=0.4pt]}, line width=0.4pt] (8.74,-4.2) -- ++(0.01,0);
      \draw (9.00,-4.2) circle (0.22);
      \node[above] at (9.00,-3.80) {$2$};
    \end{tikzpicture}\\
    \vspace{0.5em} 
    \textbf{Figure 1}
    \end{center}

\end{example}

\subsection{Braid group for type $\mathfrak{osp}(2m+1|2n)^{(1)}$}
\begin{definition}
The \textit{braid group} $\mathfrak{B}_{N}$ is generated by $T_i\ (i=0,...,N)$  subject to the following relations:
\begin{align*}
&T_iT_j=T_jT_i,\quad  j\neq i\pm 1, \\
&T_iT_{i+1}T_i=T_{i+1}T_iT_{i+1},\quad i\neq1,N-1, \\
&T_iT_{i+1}T_iT_{i+1}=T_{i+1}T_iT_{i+1}T_i,\quad i=1,N-1.
\end{align*}
\end{definition}
Within Yamane's framework (\cite{HYAM}), the braid generators can act as a series of superalgebraic isomorphisms on quantum affine superalgebras for type $\mathfrak{osp}(2m+1|2n)$. Following \cite[Proposition 7.41 \& 7.42]{HYAM}, we have
\begin{proposition}\label{Braid_Ya}
    Fix $i\in\{0,...,N\}$. Let $^{r_i}\mathfrak{osp}(2m+1|2n)^{(1)}$ denote the Lie superalgebra associated with the Dynkin diagram deduced from the action of $r_i$. There exists an isomorphism $T_i$ of superalgebras from $U_q[\mathfrak{osp}(2m+1|2n)^{(1)}]$ to $U_q[^{r_i}\mathfrak{osp}(2m+1|2n)^{(1)}]$ given by
    \begin{align*}
        &T_i(\chi_i^+)=\xi_i^+\chi_i^-K_i,\quad T_i(\chi_i^-)=\xi_i^-K_i^{-1}\chi_i^+,\quad T_i(K_{\alpha})=K_{r_i(\alpha)}, \\
        &T_i(\chi_j^+)=\xi_j^+[\![\ldots[\![\chi_j^+,\,\underbrace{\chi_i^+]\!],\ldots,\chi_i^+ ]\!]}_{s~\text{times}~\chi_i^+},\quad T_i(\chi_j^-)=\xi_j^-[\![\ldots[\![\chi_j^-,\,\underbrace{\chi_i^-]\!],\ldots,\chi_i^- ]\!]}_{s~\text{times}~\chi_i^-},
    \end{align*}
    if $i\neq j,\ r_i(\alpha_j)=\alpha_j+s\alpha_i$. Here, the constants $\xi_i^{\pm},\xi_j^{\pm}$ depend on $i,j$ and Dynkin diagram of $\mathfrak{osp}(2m+1|2n)^{(1)}$.
\end{proposition}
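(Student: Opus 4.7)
The plan is to establish Proposition~\ref{Braid_Ya} by direct verification, following the template of Lusztig's braid action on quantum groups (\cite{Lusz2}) in the super setting developed by Yamane (\cite{HYAM}). For fixed $i\in\{0,1,\ldots,N\}$, I would first extract the integer $s=-A_{ij}$ (with the appropriate adjustment when $i=n$ is the isotropic odd node and $r_i$ is an odd reflection) from $r_i(\alpha_j)=\alpha_j+s\alpha_i$, then propose the map $T_i$ on Chevalley generators by the formulas in the statement. The scalars $\xi_i^{\pm},\xi_j^{\pm}$ are not arbitrary: they are pinned down by the requirement that $T_i$ intertwine the commutation relation (\ref{DJ3}) between the original and reflected Dynkin diagrams, so one small bookkeeping task up front is to solve for them explicitly on the rank-one and rank-two subdiagrams through node $i$.

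My first verification step would be the Cartan relations (\ref{DJ1})--(\ref{DJ2}), which follow at once from $T_i(K_\alpha)=K_{r_i(\alpha)}$ and the isometry property of $r_i$. The mixed relation (\ref{DJ3}) splits into the case $j=i$ (a rank-one calculation using $T_i(\chi_i^\pm)$ together with (\ref{DJ2}), which fixes the product $\xi_i^+\xi_i^-$) and the case $j\neq i$, where one expands the iterated bracket $T_i(\chi_j^\pm)=\xi_j^\pm[\![\ldots[\![\chi_j^\pm,\chi_i^\pm]\!],\ldots,\chi_i^\pm]\!]$ and applies the two bracket identities
\[
[[X,Y]_\mu,Z]_\omega=[X,[Y,Z]_\nu]_{\mu\omega\nu^{-1}}+(-1)^{[Y][Z]}\nu[[X,Z]_{\omega\nu^{-1}},Y]_{\mu\nu^{-1}}
\]
stated just before Section~3.1, iteratively pushing $\chi_j^-$ (resp.~$\chi_j^+$) through the nest of $\chi_i^+$'s (resp.~$\chi_i^-$'s); the remaining sum telescopes against quantum integers $[s]_i$.

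The main obstacle will be the super Serre relations (\ref{DJ4})--(\ref{DJ10}). For each of these I would apply $T_i$ to the Serre polynomial, expand all nested $q$-brackets, and then use the same two bracket identities combined with the original Serre relations and the $A_{ij}=0$ commutativity (\ref{DJ4}) to collapse the image to zero inside $U_q[{}^{r_i}\mathfrak{osp}(2m+1|2n)^{(1)}]$. The most delicate cases are the degree-four relation (\ref{DJ7}) and the exotic relations (\ref{DJ8})--(\ref{DJ10}) involving the isotropic odd node $n$: when $i=n$ itself, the reflection $r_n$ genuinely changes the shape of the diagram, and one must track both the sign $(-1)^{[\alpha_n]}=-1$ and the shift in $q$-deformation parameters on each bracket; the coefficient $(1-[2]_1)=-q_1-q_1^{-1}$ appearing in (\ref{DJ10}) must emerge naturally from this reshuffling rather than being imposed by hand.

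Finally, once $T_i$ is shown to be a well-defined superalgebra homomorphism, invertibility is obtained by constructing the analogous map in the opposite direction with the roles of $\chi_i^+$ and $\chi_i^-$ swapped (and a corresponding sign adjustment on the scalars). By the symmetry of the construction, this candidate is a homomorphism via the same verifications applied to the reflected diagram, and a direct check on Chevalley generators---which reduces, after applying (\ref{DJ2}), to a rank-two computation inside $\langle\chi_i^{\pm},\chi_j^{\pm},K_i^{\pm 1},K_j^{\pm 1}\rangle$---confirms that the two composites are identity maps. The rank-two verification, though tedious, is the standard quantum $\mathfrak{sl}_2$ or $U_q(\mathfrak{osp}(1|2))$-type calculation already handled in Yamane's original work.
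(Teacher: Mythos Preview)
Your proposal is sound, but you should know that the paper does not actually prove this proposition: it is stated as a direct consequence of Yamane's results \cite[Propositions 7.41 \& 7.42]{HYAM}, with the explicit scalars $\xi_i^{\pm},\xi_j^{\pm}$ quoted from Bezerra--Futorny--Kashuba \cite{Luan} and recorded in Appendix~A. In other words, the paper treats Proposition~\ref{Braid_Ya} as a citation, not a result requiring independent proof.

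Your direct-verification strategy---pin down the scalars on rank-one and rank-two subdiagrams, then check the Cartan, commutator, and Serre relations case by case using the $q$-bracket identities, and finally exhibit an explicit inverse---is exactly the method by which such braid actions are established in the literature (Lusztig in the non-super case, Yamane and then \cite{Luan} in the super case). So what you have outlined is essentially a reconstruction of the cited proofs rather than an alternative to anything the present paper does. The one place where your sketch is a bit optimistic is the block of exotic Serre relations \eqref{DJ8}--\eqref{DJ10}: these are not handled by a uniform telescoping argument but require a genuinely separate computation for each target Dynkin diagram, and the isotropic case $i=n$ forces you to verify the \emph{different} Serre relations of $U_q[{}^{r_n}\mathfrak{osp}(2m+1|2n)^{(1)}]$ rather than the original ones. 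None of this is wrong in your plan, but the phrase ``collapse the image to zero'' understates the amount of case-splitting involved; this is precisely why the paper defers to \cite{HYAM,Luan} rather than reproducing the check.
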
The explicit action of $T_i$ was provided by Luan-Foturny-Kashuba in their work \cite{Luan}. In Appendix A, we have listed some of these actions that are relevant to our study.
\begin{remark}
   As stated in \cite{Luan,HYAM}, the operator $T_i$ generally fails to be an automorphism of the quantum superalgebra $U_q[\mathfrak{osp}(2m+1|2n)^{(1)}]$, since the reflection $r_i$ it depends on may alter the Dynkin diagram.
\end{remark}
Clearly, there exists a one-to-one correspondence between the braid generator $T_i$ and the reflection $r_i$.
Define the elements of $\mathfrak{B}_{n+m}$ as below
\begin{align*}
&T_{\omega_1}=T_0T_1\cdots T_NT_{N-1}\cdots T_1, \\
&T_{\omega_2}=T_1^{-1}T_{\omega_1}T_1^{-1}T_{\omega_1}, \\
&T_{\omega_{i+1}}=T_{\omega_{i-1}}^{-1}T_{\omega_i}T_i^{-1}T_{\omega_i}T_i^{-1},\quad i=2,\ldots,N-1.
\end{align*}
Then for $j\neq i$, we have
$$ T_{\omega_i}T_{\omega_j}=T_{\omega_j}T_{\omega_i},\quad T_{\omega_i}(K_j)=K_{\omega_i(\alpha_j)},\quad T_{\omega_i}(\chi_{j}^{\pm})=\chi_j^{\pm}.$$
Each element $ T_{\omega_i}$ forms an automorphism of superalgebra
$U_q[\mathfrak{osp}(2m+1|2n)^{(1)}]$.

In the subsequent part of this section, we will establish several essential lemmas to show the connection between the action of all $T_{\omega_i}$ on Chevalley generators and quantum root vectors we defined in Section 3.1.

\subsection{Connection among braid elements and quantum root vectors}
As a directly conclusion of \cite[Lemma 5.5]{Luan}, we obtain
\begin{lemma}\label{LM:T1}
In $U_q[\mathfrak{osp}(2m+1|2n)^{(1)}]$, the following relations hold for $i=1,\ldots,N$:
\begin{align}\label{useful:1}
&[\chi_i^-,\,K_iT_{\omega_i}(\chi_i^-)]=0, \\ \label{useful:2}
&[\![T_{\omega_i}(\chi_i^-),\,\chi_{i+1}^-]\!]=[\![T_{\omega_{i+1}}(\chi_{i+1}^-),\,\chi_i^-]\!], \\ \label{useful:3}
&[\![T_{\omega_i}^{-1}(\chi_i^+),\,K_i^{-1}\chi_i^-]\!]=[\![\chi_i^+,\,T_{\omega_i}(K_i^{-1}\chi_i^-)]\!].
\end{align}
\end{lemma}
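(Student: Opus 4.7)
\textbf{Proof proposal for Lemma~\ref{LM:T1}.}

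The three identities are stated as direct consequences of Lemma 5.5 in Luan-Futorny-Kashuba, so my plan is to obtain them by translation. The dictionary identifies their braid element with $T_{\omega_i}$, matches the $q$-super-bracket $[\![\,\cdot\,,\,\cdot\,]\!] = [\,\cdot\,,\,\cdot\,]_{q^{-(\alpha,\beta)}}$ to their bracket, and sends their central element to $K_\delta$. Under this dictionary, (\ref{useful:1})--(\ref{useful:3}) are term-by-term restatements of three identities on their side, so the verification reduces to checking that no conventions drift through the translation.

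For a self-contained derivation, one would expand $T_{\omega_i}(\chi_i^{\pm})$ via Proposition~\ref{Braid_Ya} and the formulas in Appendix~A, and then verify each identity as a computation in a specific weight space: (\ref{useful:1}) lives in the space of weight $-2\alpha_i + \delta$, (\ref{useful:2}) in $-\alpha_i - \alpha_{i+1} + \delta$, and both sides of (\ref{useful:3}) in the imaginary space $\delta$. For (\ref{useful:1}), the key input is the Serre-type relations (\ref{DJ4})--(\ref{DJ10}) applied to the iterated bracket, collapsing the commutator against the $K$-action. For (\ref{useful:2}), the commutativity $T_{\omega_i} T_{\omega_{i+1}} = T_{\omega_{i+1}} T_{\omega_i}$ and the invariance $T_{\omega_j}(\chi_i^{\pm}) = \chi_i^{\pm}$ for $j \neq i$, both recorded in Section~4.2, convert the identity into a super-Jacobi relation among shifted generators of matching weight. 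For (\ref{useful:3}), one exploits $T_{\omega_i}(K_i^{-1}) = K_i^{-1} K_\delta$, the centrality of $K_\delta$, and the fact that $T_{\omega_i}$ preserves the $q$-super-bracket structure (since it preserves the bilinear form on the affine root lattice), so that a single application of $T_{\omega_i}$ to the left-hand side maps it into an expression that matches the right-hand side modulo a central rescaling.

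The main obstacle is the parity-dependent case analysis. When $\alpha_i$ is the isotropic odd node $\alpha_n$ or the odd non-isotropic node $\alpha_N$, the explicit expression for $T_{\omega_i}(\chi_i^{\pm})$ expands into longer iterated brackets that trigger the higher Serre relations (\ref{DJ8})--(\ref{DJ10}); each such case requires a separate verification against Appendix~A. In addition, the sign factors $(-1)^{[\alpha_i][\alpha_j]}$ and the $q$-bracket parameters $q^{-(\alpha,\beta)}$ demand careful bookkeeping, especially in (\ref{useful:3}) where cross-parity terms appear with both $\chi_i^+$ and $\chi_i^-$, and the weight-$\delta$ space on the right-hand side is multi-dimensional so equality is not merely a one-dimensional scaling check.
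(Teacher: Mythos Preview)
Your proposal is correct and matches the paper's approach: the paper gives no proof at all, simply recording the lemma ``as a direct conclusion of \cite[Lemma 5.5]{Luan}'', and your primary plan is precisely to translate from that source under the dictionary identifying their braid element with $T_{\omega_i}$ and their central element with $K_\delta$. Your additional sketch of a self-contained verification via Proposition~\ref{Braid_Ya}, Appendix~A, and the Serre relations goes well beyond what the paper supplies, but is a reasonable outline of what such a direct check would entail.
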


Now we need to formulate the commutation relations of quantum root vectors $E_{\alpha}$ for $\alpha\in\widetilde{\underline{\Delta}}_+$.
\begin{lemma}\label{LM:T2}
In $U_q[\mathfrak{osp}(2m+1|2n)^{(1)}]$, the following relations hold for $i=1,\ldots,N$:
\begin{align}\label{eq:KE}
&K_iE_{\delta-\alpha_i}K_i^{-1}=q^{-(\alpha_i,\alpha_i)}E_{\delta-\alpha_i},\quad K_iF_{\delta-\alpha_i}K_i^{-1}=q^{(\alpha_i,\alpha_i)}F_{\delta-\alpha_i}, \\ \label{eq:EF2}
&[E_{\delta-\alpha_i},\,F_{\delta-\alpha_i}]=-(-1)^{[\alpha_i]}\frac{K_{\delta-\alpha_i}-K_{\delta-\alpha_i}^{-1}}{q_i-q_i^{-1}}.
\end{align}
\end{lemma}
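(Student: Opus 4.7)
My plan splits the lemma into two parts handled by different methods: the $K_i$-conjugation identities (\ref{eq:KE}) are verified by a direct induction down the recursive construction of $E_{\delta-\alpha_i}$ (together with the anti-automorphism $\Omega$ for $F_{\delta-\alpha_i}$), while the commutator identity (\ref{eq:EF2}) is extracted from the Chevalley relation (\ref{DJ3}) by applying the braid automorphism $T_{\omega_i}$.

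For (\ref{eq:KE}), the key observation is that $K_i$-conjugation distributes across the bracket $[\![\cdot,\cdot]\!]$: for homogeneous $X,Y$ and scalar $\mu$, the scalar prefactors commute with $K_i$, giving $K_i[\![X,Y]\!]_\mu K_i^{-1} = [\![K_iXK_i^{-1},\, K_iYK_i^{-1}]\!]_\mu$. Starting from the base relations $K_i\chi_j^+K_i^{-1} = q_i^{A_{ij}}\chi_j^+$ supplied by (\ref{DJ2}) and iterating through (E1)--(E3), I will compute the total scalar by which $K_i$ conjugates $E_{\delta-\alpha_i}$ and verify it equals $q^{-(\alpha_i,\alpha_i)}$. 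The companion identity $K_iF_{\delta-\alpha_i}K_i^{-1} = q^{(\alpha_i,\alpha_i)}F_{\delta-\alpha_i}$ then follows by applying the superalgebraic anti-automorphism $\Omega$ from Proposition \ref{anti-auto}, noting $\Omega(K_i) = K_i^{-1}$, $\Omega(E_{\delta-\alpha_i}) = F_{\delta-\alpha_i}$, and $\Omega(q) = q^{-1}$.

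For (\ref{eq:EF2}), I use that $T_{\omega_i}$ is a superalgebra automorphism of $U_q[\mathfrak{osp}(2m+1|2n)^{(1)}]$ with $\omega_i(\alpha_i) = \alpha_i - \delta$. Consequently, $T_{\omega_i}(\chi_i^\mp)$ is a homogeneous element of weight $\pm(\delta-\alpha_i)$, and by one-dimensionality of these real root spaces in $U_q^{\pm}$ there exist scalars $\mu_i,\nu_i$ such that $T_{\omega_i}(\chi_i^-) = \mu_i E_{\delta-\alpha_i}$ and $T_{\omega_i}(\chi_i^+) = \nu_i F_{\delta-\alpha_i}$. Applying $T_{\omega_i}$ to (\ref{DJ3}) and using $T_{\omega_i}(K_i) = K_{\alpha_i-\delta} = K_{\delta-\alpha_i}^{-1}$ yields
\[
\mu_i\nu_i\,[F_{\delta-\alpha_i},\,E_{\delta-\alpha_i}] = \frac{K_{\delta-\alpha_i}^{-1}-K_{\delta-\alpha_i}}{q_i-q_i^{-1}},
\]
and the super-skew-commutativity $[X,Y] = -(-1)^{[X][Y]}[Y,X]$ converts this into the claimed identity, provided $\mu_i\nu_i = -1$.

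The main obstacle is thus the determination of $\mu_i$ and $\nu_i$ and the verification $\mu_i\nu_i = -1$. To handle this, I will expand $T_{\omega_i}(\chi_i^\pm)$ using the explicit braid formulas in Appendix A (following Luan--Futorny--Kashuba) as iterated super-brackets of Chevalley generators, then match the resulting expression term-by-term with the recursive definition (E1)--(E3) of $E_{\delta-\alpha_i}$ and $F_{\delta-\alpha_i}$. Case analysis will be needed for $i = 1$, $i = N$, the odd isotropic node $i = n$, and the generic interior case, since the support conditions separating (E2) from (E3) and the sign $(-1)^{[\alpha_i]}$ behave differently in each regime. Once the proportionality constants are identified, the scalar identity $\mu_i\nu_i = -1$ closes the proof.
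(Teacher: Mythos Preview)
Your approach to (\ref{eq:KE}) matches the paper's: both simply unwind the recursive Definition~\ref{q-roots} using the conjugation rule (\ref{DJ2}). For (\ref{eq:EF2}) the paper also proceeds by direct verification from Definition~\ref{q-roots}; your braid-group route is genuinely different, and it carries a concrete error.

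The gap is in the assertion that $T_{\omega_i}(\chi_i^-)=\mu_iE_{\delta-\alpha_i}$ and $T_{\omega_i}(\chi_i^+)=\nu_iF_{\delta-\alpha_i}$. The braid operators $T_j$ (see Appendix~A) send $\chi_j^{\pm}$ to scalar multiples of $\chi_j^{\mp}K_j^{\pm1}$, so the composite $T_{\omega_i}$ accumulates nontrivial $K$-factors; compare Lemma~\ref{LM:T4}, which gives $T_{\omega_i}(\chi_i^-)=c_iK_{\delta}^{-1}K_iE_{\delta-\alpha_i}$ rather than a bare scalar multiple of $E_{\delta-\alpha_i}$. Consequently the ``one-dimensionality of real root spaces in $U_q^{\pm}$'' argument does not apply: $T_{\omega_i}(\chi_i^-)$ does not lie in $U_q^+$, and within the full algebra the weight-$(\delta-\alpha_i)$ space is not one-dimensional (it is a free $U_q^0$-module). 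Your equation $\mu_i\nu_i[F_{\delta-\alpha_i},E_{\delta-\alpha_i}]=\dots$ therefore does not follow from applying $T_{\omega_i}$ to (\ref{DJ3}) without first tracking and then commuting away these $K$-factors---which in turn relies on (\ref{eq:KE}) and on having the analogue of Lemma~\ref{LM:T4} for $\chi_i^+$ as well (the paper only states $T_{\omega_i}^{-1}(\chi_i^+)$, not $T_{\omega_i}(\chi_i^+)$, and only for $i\neq n$).

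Once these $K$-factors are handled, the scalar you must verify is no longer simply $\mu_i\nu_i=-1$ but a more delicate identity involving the constants $c_i$ of Lemma~\ref{LM:T4} and powers of $q$ coming from commuting $K$'s past $E_{\delta-\alpha_i}$. All of this is doable, but it amounts to proving (a sharpening of) Lemma~\ref{LM:T4} en route---substantially more than the direct computation from Definition~\ref{q-roots} that the paper has in mind. The direct route simply expands $[E_{\delta-\alpha_i},F_{\delta-\alpha_i}]$ through the nested brackets (E1)--(E3) and repeatedly applies (\ref{DJ3}); it is tedious but elementary and avoids the braid machinery entirely.
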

\begin{proof}
All the relations in this lemma can be directly verified by the Definition \ref{q-roots}.
\end{proof}

Next, we simply set
\begin{align*}
&[\![X_1,\ldots,X_t]\!]_{\ell}=[\![X_1,\,[\![X_2,\ldots,\, [\![X_{t-1},\,X_t]\!]\ldots ]\!], \\
&[\![X_1,\ldots,X_t]\!]_r=[\![\ldots [\![X_1,\,X_2]\!],\,\ldots,\,X_{t-1}]\!],\,X_t ]\!].
\end{align*}
\begin{lemma}\label{LM:T3}
In $U_q[\mathfrak{osp}(2m+1|2n)^{(1)}]$, the following relations hold for $i=1,\ldots,N$:
\begin{align}\label{useful:4}
&[E_{\delta-\alpha_i},\,F_{\alpha_i}]=0,\quad [F_{\delta-\alpha_i},\,E_{\alpha_i}]=0, \\ \label{useful:5}
&d_{i+1}K_i\,[E_{\delta-\alpha_i},\,F_{\alpha_{i+1}}]+d_{i+2}K_{i+1}\,[E_{\delta-\alpha_{i+1}},\,F_{\alpha_i}]=0, \\ \label{useful:6}
&d_{i+1}[F_{\delta-\alpha_i},\,E_{\alpha_{i+1}}]K_i^{-1}+d_{i+2}[F_{\delta-\alpha_{i+1}},\,E_{\alpha_i}]K_{i+1}^{-1}=0.
\end{align}
\end{lemma}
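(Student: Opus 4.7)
The plan is to derive each of the three identities from the corresponding braid relation in Lemma 4.3 (LM:T1), after identifying the braid-transformed Chevalley generators with the quantum root vectors of Definition 3.4. Because $\omega_i$ acts on simple roots by $\omega_i(\alpha_j)=\alpha_j-\delta_{ij}\delta$, the vector $T_{\omega_i}(\chi_i^-)$ has $K$-weight $\delta-\alpha_i$; using the explicit braid formulas of Proposition 4.2 (tabulated in Appendix A) together with the inductive definition (E2)--(E3), one verifies $K_i T_{\omega_i}(\chi_i^-)=\lambda_i E_{\delta-\alpha_i}$ for an explicit nonzero scalar $\lambda_i$. Combined with $F_{\alpha_i}=d_{i+1}\chi_i^-$ and $E_{\alpha_i}=\chi_i^+$ from (E1), this gives the dictionary converting the three relations of Lemma 4.3 into the three target identities.

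For $[E_{\delta-\alpha_i},F_{\alpha_i}]=0$, I substitute $K_iT_{\omega_i}(\chi_i^-)=\lambda_i E_{\delta-\alpha_i}$ and $\chi_i^-=d_{i+1}^{-1}F_{\alpha_i}$ into the first relation of Lemma 4.3, namely $[\chi_i^-,K_iT_{\omega_i}(\chi_i^-)]=0$; since $\lambda_i$ and $d_{i+1}^{-1}$ are nonzero, the claim follows at once. The companion identity $[F_{\delta-\alpha_i},E_{\alpha_i}]=0$ is then obtained by applying the anti-automorphism $\Omega$ of Proposition 3.3, which sends $E_\beta\leftrightarrow F_\beta$ (cf.\ (E6)) and turns a plain supercommutator into a signed plain supercommutator, so vanishing is preserved.

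For the second identity, I expand the q-bracket relation $[\![T_{\omega_i}(\chi_i^-),\chi_{i+1}^-]\!]=[\![T_{\omega_{i+1}}(\chi_{i+1}^-),\chi_i^-]\!]$ using the definition $[\![X,Y]\!]=XY-(-1)^{[X][Y]}q^{-(\mathrm{wt}X,\mathrm{wt}Y)}YX$, rewrite $T_{\omega_i}(\chi_i^-)=\lambda_i K_i^{-1}E_{\delta-\alpha_i}$ (and analogously for index $i+1$), and commute the $K_i^{-1},K_{i+1}^{-1}$ factors past the other terms using the conjugation rule of Lemma 4.4. The q-bracket coefficients $q^{-(\alpha_i,\alpha_{i+1})}$ and the parity sign $(-1)^{[\alpha_i][\alpha_{i+1}]}$ cancel symmetrically against the conjugation eigenvalues $q^{-(\alpha_i,\alpha_i)}$, collapsing the identity to a plain supercommutator relation of the displayed form $d_{i+1}K_i[E_{\delta-\alpha_i},F_{\alpha_{i+1}}]+d_{i+2}K_{i+1}[E_{\delta-\alpha_{i+1}},F_{\alpha_i}]=0$. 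The third identity follows either by the analogous computation starting from the third relation of Lemma 4.3 or, more cleanly, as the image of the second identity under $\Omega$.

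The main technical obstacle is the bookkeeping of scalars: first, evaluating the constants $\xi_i^{\pm},\xi_j^{\pm}$ produced by each simple reflection $T_i$ along the reduced sequence defining $T_{\omega_i}$ so as to pin down $\lambda_i$; second, verifying that after all substitutions and commutations the combined effect of the q-bracket coefficients, the parity signs, and the conjugation eigenvalues collapses precisely to the prefactors $d_{i+1}K_i$ and $d_{i+2}K_{i+1}$ (respectively $d_{i+1}K_i^{-1}$, $d_{i+2}K_{i+1}^{-1}$) prescribed in the statement. A subsidiary check is that the two scalars $\lambda_i$ and $\lambda_{i+1}$ are coherent across the two sides of the symmetric identity in Lemma 4.3, so that a uniform factor can be cleared to leave the stated coefficients.
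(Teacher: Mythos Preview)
Your approach is genuinely different from the paper's and has a circularity problem. The paper proves Lemma~\ref{LM:T3} \emph{directly}, using the linear maps ${}_ir,r_i:U_q^+\to U_q^+$ of Lemma~\ref{ri}: for the low-rank case $m=n=1$ it writes $[E_{\delta-\alpha_1},F_{\alpha_1}]=(r_1(E_{\delta-\alpha_1})K_1-K_1^{-1}\,{}_1r(E_{\delta-\alpha_1}))/(q_1-q_1^{-1})$, expresses $E_{\delta-\alpha_1}$ as a nested $q$-bracket via (E2)--(E3), and checks $r_1(E_{\delta-\alpha_1})={}_1r(E_{\delta-\alpha_1})=0$ by the Leibniz-type rules for $r_1,{}_1r$. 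Relation~\eqref{useful:5} is handled analogously, and the $F$-versions follow from $\Omega$. No braid identification is used.

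Your proposal instead rests on the identification $K_iT_{\omega_i}(\chi_i^-)=\lambda_i E_{\delta-\alpha_i}$. But in the paper this identification is precisely Lemma~\ref{LM:T4}, which comes \emph{after} Lemma~\ref{LM:T3} and whose proof explicitly invokes relation~\eqref{useful:5} (to reduce the case $i=2$ when $[\alpha_1]=0$, i.e.\ when $n>1$, and then to run the induction on $i$). So you cannot simply assert that ``one verifies'' this identity from Proposition~\ref{Braid_Ya} and Appendix~A: the verification the paper actually performs requires the lemma you are trying to prove. Moreover, Lemma~\ref{LM:T4} is stated only for $i\neq n$, so even granting it you would not cover $i=n$. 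If you want to salvage your route, you would have to supply an \emph{independent} step-by-step braid computation of $T_{\omega_i}(\chi_i^-)$ for every $i$ (including $i=n$), tracking all the constants $\xi^\pm$ through the recursive definition $T_{\omega_{i+1}}=T_{\omega_{i-1}}^{-1}T_{\omega_i}T_i^{-1}T_{\omega_i}T_i^{-1}$; this is substantially harder than the paper's direct argument via Lemma~\ref{ri}, and you have not carried it out.
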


\begin{proof}We present the proof for the low rank case with $m=n=1$, since the general case can be derived from it by induction.

For relations \eqref{useful:4}$_1$ with $i=1$, by Lemma \ref{ri} and the definition (E1), the left-hand side is equal to
\begin{gather*}
  [E_{\delta-\alpha_1},\,F_{\alpha_1}]=\displaystyle\frac{r_1(E_{\delta-\alpha_1})K_1-K_1^{-1}\,{}_1r(E_{\delta-\alpha_1})}{q_1-q_1^{-1}},
\end{gather*}
for $c_1\in\mathbb{C}(q^{1/2})$, we find that
\begin{align*}
  r_1(E_{\delta-\alpha_1})&=c_1[\![E_{\alpha_2}, E_{\alpha_2}, E_{\alpha_1}, E_{\alpha_0}]\!]_{\ell}\\
  &=c_1\big(E_{\alpha_2}r_1([\![E_{\alpha_2}, E_{\alpha_1}, E_{\alpha_0}]\!]_{\ell})-r_1([\![E_{\alpha_2}, E_{\alpha_1}, E_{\alpha_0}]\!]_{\ell})E_{\alpha_2}\big)\\
  &=c_1\big(E_{\alpha_2}^2r_1([\![ E_{\alpha_1}, E_{\alpha_0}]\!])+q^2r_1([\![ E_{\alpha_1}, E_{\alpha_0}]\!])E_{\alpha_2}^2-(1+q^2)E_{\alpha_2}r_1([\![ E_{\alpha_1}, E_{\alpha_0}]\!])E_{\alpha_2}\big)\\
  &=0,
\end{align*}since $r_1([\![ E_{\alpha_1}, E_{\alpha_0}]\!])=r_1([ E_{\alpha_1}, E_{\alpha_0}]_{q^{-2}})=r_1([ \chi_{1}^+, \chi_{0}^+]_{q^{-2}})=0$. Similarly, we can show that  ${}_1r(E_{\delta-\alpha_1})=0$, thus
\begin{gather*}
  [E_{\delta-\alpha_1},\,F_{\alpha_1}]=0.
\end{gather*}Likewise,
\begin{gather*}
  [E_{\delta-\alpha_2},\,F_{\alpha_2}]=0.
\end{gather*}

A similar calculation can verify relation \eqref{useful:5}, we omit this process here.
The remaining relations can be deduced by applying the operator $\Omega$ to the former.

\end{proof}

\begin{lemma}\label{LM:T4}
For some $c_i\in\mathbb{C}(q^{1/2})$, the following relations hold for for $i\in\{1,...,N\}\backslash\{n\}$:
\begin{align}\label{useful:7}
&T_{\omega_{i}}(\chi_i^-) = c_iK_{\delta}^{-1}K_iE_{\delta-\alpha_i}, \\ \label{useful:8}
&T_{\omega_{i}}^{-1}(\chi_i^+) = c_iE_{\delta+\alpha_i} .
\end{align}
\end{lemma}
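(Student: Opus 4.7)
The plan is to establish both identities by an induction on $i$ that leverages the recursive structure of the braid elements $T_{\omega_i}$ in parallel with the inductive definition of the quantum root vectors $E_{\delta-\alpha_i}$ from Definition \ref{q-roots}. As a preliminary, I would verify weight consistency: since $\omega_i(\alpha_i)=\alpha_i-\delta$, the element $T_{\omega_i}(\chi_i^-)$ has weight $\delta-\alpha_i$, which matches the weight of $K_\delta^{-1}K_iE_{\delta-\alpha_i}$; likewise $T_{\omega_i}^{-1}(\chi_i^+)$ has weight $\delta+\alpha_i$, matching $E_{\delta+\alpha_i}$. This reduces the task to pinning down the proportionality scalar $c_i$.

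For the base case $i=1$, I would expand $T_{\omega_1}=T_0T_1\cdots T_NT_{N-1}\cdots T_1$ and apply Proposition \ref{Braid_Ya} together with the explicit formulas listed in Appendix A successively to $\chi_1^-$. The nested brackets produced by this sequence can be matched directly with the iterative expression for $E_{\delta-\alpha_1}$ coming from rules (E1)--(E3) of Definition \ref{q-roots}, extracting the constant $c_1$ along the way.

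For the inductive step, I would exploit identity \eqref{useful:2} of Lemma \ref{LM:T1}, namely $[\![T_{\omega_i}(\chi_i^-),\,\chi_{i+1}^-]\!]=[\![T_{\omega_{i+1}}(\chi_{i+1}^-),\,\chi_i^-]\!]$. Substituting the induction hypothesis into the left-hand side and using the $q$-commutator identities recorded after Definition \ref{Drin-pre} to move the Cartan factor $K_\delta^{-1}K_i$ past the bracket, the left-hand side takes the form of a scalar multiple of $K_\delta^{-1}K_{i+1}[\![E_{\delta-\alpha_i},\,\chi_{i+1}^-]\!]$ (with the parity-dependent sign). A parallel rearrangement of the right-hand side, combined with rule (E3) applied to the pair $(\alpha_{i+1},\delta-\alpha_i-\alpha_{i+1})$, identifies $T_{\omega_{i+1}}(\chi_{i+1}^-)$ as $c_{i+1}K_\delta^{-1}K_{i+1}E_{\delta-\alpha_{i+1}}$ and yields a recursion for $c_{i+1}$ in terms of $c_i$. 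The exclusion of $i=n$ is natural here, because the odd reflection $r_n$ alters the Dynkin diagram and $E_{\delta-\alpha_n}$ is built through a different specialization of (E3), which will be handled separately.

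The second identity $T_{\omega_i}^{-1}(\chi_i^+)=c_iE_{\delta+\alpha_i}$ can then be obtained either by applying the anti-automorphism $\Omega$ of Proposition \ref{anti-auto} to the first identity (noting that $\Omega$ intertwines $T_{\omega_i}$ with $T_{\omega_i}^{-1}$ up to a controlled twist and sends $E_{\delta-\alpha_i}$ to $F_{\delta-\alpha_i}$), or directly from relation \eqref{useful:3} of Lemma \ref{LM:T1}, which couples the two expressions through the bracket with $K_i^{-1}\chi_i^-$. The principal obstacle I anticipate is the bookkeeping of the parity-dependent signs and of the scalar factors appearing in the iterated $q$-commutators, particularly when commuting Cartan factors through them and when reconciling the different sub-cases in (E2) and (E3) according to whether $\operatorname{supp}\alpha_i\cap\operatorname{supp}\beta$ is empty; the boundary indices $i=1,N-1,N$ corresponding to the doubled-bond ends of the Dynkin diagram will also require separate verification.
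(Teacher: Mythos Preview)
Your inductive plan via \eqref{useful:2} has a real gap: knowing the \emph{bracket} $[\![T_{\omega_{i+1}}(\chi_{i+1}^-),\,\chi_i^-]\!]$ does not by itself determine $T_{\omega_{i+1}}(\chi_{i+1}^-)$. You write that ``a parallel rearrangement of the right-hand side'' identifies the unknown element, but the map $X\mapsto[\![X,\chi_i^-]\!]$ is not a priori injective on the relevant weight space, and your preliminary remark that weight consistency alone ``reduces the task to pinning down the proportionality scalar'' is unjustified---the weight-$(\delta-\alpha_{i+1})$ component of $U_q$ is not known to be one-dimensional at this point. To make your route work you would need an extra step: commute both sides of \eqref{useful:2} with $\chi_i^+$, use $[\chi_i^+,\,T_{\omega_{i+1}}(\chi_{i+1}^-)]=T_{\omega_{i+1}}[\chi_i^+,\chi_{i+1}^-]=0$, and then \eqref{DJ3} collapses the right-hand side to a scalar multiple of $T_{\omega_{i+1}}(\chi_{i+1}^-)$. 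The paper in fact performs exactly this maneuver, but only for the special case $i=2$ with $[\alpha_1]=0$ (and there it also needs Lemma~\ref{LM:T3} via \eqref{useful:5} to identify the resulting expression with $E_{\delta-\alpha_2}$).

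For the main inductive step $i>2$ the paper takes a different and more direct route: it unfolds the recursion $T_{\omega_{p+1}}=T_{\omega_{p-1}}^{-1}T_{\omega_p}T_p^{-1}T_{\omega_p}T_p^{-1}$ and applies it to $\chi_{p+1}^-$ explicitly, using $T_{\omega_j}(\chi_{p+1}^-)=\chi_{p+1}^-$ for $j\neq p+1$, the formula $T_p^{-1}(\chi_{p+1}^-)\sim[\![\chi_p^-,\chi_{p+1}^-]\!]$ from Appendix~A, and the induction hypothesis for $T_{\omega_p}$. This produces a nested $q$-commutator that matches $E_{\delta-\alpha_{p+1}}$ directly, with no bracket inversion needed. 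For \eqref{useful:8} your two suggested routes are each incomplete on their own: applying $\Omega$ to \eqref{useful:7} yields $F_{\delta-\alpha_i}$, not $E_{\delta+\alpha_i}$, as you note; the paper instead combines both ideas, first using \eqref{useful:7} and Lemma~\ref{LM:T3} to get $[\![T_{\omega_i}^{-1}(\chi_i^+),\chi_i^+]\!]=0$ via $\Omega$, and then commuting with $\chi_i^-$ and invoking \eqref{useful:3} together with the definition of $E_{\delta+\alpha_i}$ through $E_{\delta^{(i)}}$ in~(E4).
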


\begin{proof}{
For any  $a,b\in U_q[^{\omega}\!\mathfrak{osp}(2m+1|2n)^{(1)}]$ with $\omega\in \widetilde{W}$ we will write $a\sim b$ if there exists
$c\in\mathbb{C}(q^{1/2})$
such that $a=cb$.}
We begin by establishing relation \eqref{useful:7}.
According to Proposition \ref{Braid_Ya} and Appendix A, we deduce that $T_{\omega_1}(\chi_1^-)\sim K_{\delta}^{-1}K_1E_{\delta-\alpha_1}$ by immediately calculations.

Now, for $i=2$ with $[\alpha_1]=1$, we observe:
\begin{align*}
T_{\omega_2}(\chi_2^-) &\sim K_{\delta}^{-1}K_2[\![\chi_3^+,\ldots,\chi_{N}^+,\chi_{N}^+,\ldots,\chi_3^+,[\![\chi_1^+,\,\chi_2^+]\!],\chi_1^+,\chi_0^+ ]\!]_{\ell} \\
&\sim K_{\delta}^{-1}K_2[\![\chi_1^+,\chi_3^+,\ldots,\chi_{N}^+,\chi_{N}^+,\ldots,\chi_2^+,\chi_1^+,\chi_0^+ ]\!]_{\ell} \\
&\sim K_{\delta}^{-1}K_2 E_{\delta-\alpha_2},
\end{align*}
where a straightforward calculation yields:
\begin{align*}
[\![ [\![\chi_1^+,\,\chi_2^+]\!],\,[\![\chi_1^+,\,\chi_0^+]\!] ]\!] &=[[\chi_1^+,\,\chi_2^+]_q,\,[\chi_1^+,\,\chi_0^+]_{q^{-2}}]_{q^{-1} } \\
&=[\chi_1^+,\,[\chi_2^+,\,[\chi_1^+,\,\chi_0^+]_{q^{-2}}]_q ]_{q^{-1}}+q[[\chi_1^+,\,[\chi_1^+,\,\chi_0^+]_{q^{-2}}]_{q^{-2}},\,\chi_2^+] \\
&=[\![\chi_1^+,\chi_2^+,\chi_1^+,\chi_0^+]\!]_{\ell}.
\end{align*}
For the case $[\alpha_1]=0$, we claim that $[\chi_1^+,\,K_2E_{\delta-\alpha_2}]=0$. Indeed, one can verify:
\begin{gather*}
[\chi_1^+,\,K_2E_{\delta-\alpha_2}]\sim K_2[\![\chi_3^+,\,\ldots,\,\chi_{N}^+,\,\chi_{N}^+,\,\ldots,\,\chi_1^+,\,\chi_1^+,\,\chi_2^+,\,\chi_1^+,\,\chi_0^+]\!]_{\ell}.
\end{gather*}
It is straightforward to observe that the element $[\![\chi_1^+,\,\chi_1^+,\,\chi_2^+,\,\chi_1^+,\chi_0^+]\!]_{\ell}$ commutes with $\chi_i^-$ for all $i$. This implies $[\![\chi_1^+,\,\chi_1^+,\,\chi_2^+,\,\chi_1^+,\chi_0^+]\!]_{\ell}=0$ by virtue of Lemma \ref{useful}.

By commuting both sides of relation \eqref{useful:5} for $i=1$ with $E_{\alpha_1}$, we obtain:
\begin{align*}
0 = &[E_{\alpha_1},\,d_2K_1[E_{\delta-\alpha_1},\,F_{\alpha_2}]+d_3K_2[E_{\delta-\alpha_2},\,F_{\alpha_1}]\,] \\
= & d_2K_1q^{-(\alpha_1,\alpha_1)}[E_{\alpha_1},\,[E_{\delta-\alpha_1},\,F_{\alpha_2}]]_{q^{(\alpha_1,\alpha_1)}}
+ d_3K_2q^{-(\alpha_1,\alpha_2)}[E_{\alpha_1},\,[E_{\delta-\alpha_2},\,F_{\alpha_1}]]_{q^{(\alpha_1,\alpha_2)}} \\
= & d_2K_1q^{-(\alpha_1,\alpha_1)}\left([[E_{\alpha_1},\,E_{\delta-\alpha_1}]_{q^{(\alpha_1,\alpha_1)}},\,F_{\alpha_2}]
+(-1)^{[\alpha_1]}q^{(\alpha_1,\alpha_1)}[E_{\delta-\alpha_1},\,[E_{\alpha_1},\,F_{\alpha_2}]]_{q^{-(\alpha_1,\alpha_1)}}\right) \\
& + d_3K_2q^{-(\alpha_1,\alpha_2)}\left([[E_{\alpha_1},\,E_{\delta-\alpha_2}]_{q^{(\alpha_1,\alpha_2)}},\,F_{\alpha_1}]
+q^{(\alpha_1,\alpha_2)}[E_{\delta-\alpha_2},\,[E_{\alpha_1},\,F_{\alpha_1}]]_{q^{-(\alpha_1,\alpha_2)}}\right) \\
= & d_2K_1[E_{\delta^{(1)}},\,F_{\alpha_2}]+d_2d_3K_2[A_{12}]_1K_1E_{\delta-\alpha_2}.
\end{align*}

This yields:
\begin{gather*}
[E_{\delta^{(1)}},\,\chi_2^-]=-[A_{12}]_1K_2E_{\delta-\alpha_2}.
\end{gather*}

Similarly, by commuting relation \eqref{useful:2} for $i=1$ on both sides with $\chi_1^+$, one has:
\begin{gather*}
T_{\omega_2}(\chi_2^-)\sim [E_{\delta^{(1)}},\,\chi_2^-]\sim K_2E_{\delta-\alpha_2},
\end{gather*}
since $[\chi_1^+,\,T_{\omega_2}(\chi_2^-)]=T_{\omega_2}[\chi_1^+,\,\chi_2^-]=0$.

Now, we will prove it for $i > 2$. Suppose it holds for $i = p$. Then we have
\begin{align*}
T_{\omega_{p+1}}(\chi_{p+1}^-) &= T_{\omega_{p-1}}^{-1}T_{\omega_p}T_{\omega_{p+1}}(\chi_{p+1}^-) = T_p^{-1}T_{\omega_p}T_p^{-1}(\chi_{p+1}^-) \\
&\sim T_p^{-1}T_{\omega_p}([\![\chi_p^-,\,\chi_{p+1}^-]\!]) \\
&\sim T_p^{-1}([\![K_pE_{\delta-\alpha_p},\,\chi_{p+1}^-]\!]) \\
&\sim T_p^{-1}([\![\chi_1^+,\,\ldots,\,\chi_{p-1}^+,\,\chi_{p+2}^+,\,\ldots,\,\chi_{N}^+,\,\chi_{N}^+,\,\ldots,\chi_1^+,\chi_0^+]\!]_{\ell}) \\
&\sim [\![\chi_1^+,\,\ldots,\,\chi_{p-2}^+,\,[\![\chi_p^+,\,\chi_{p-1}^+]\!],\,\chi_{p+2}^+,\,\ldots,\,\chi_{N}^+,\,
\chi_{N}^+,\,\ldots,\,,\chi_1^+,\chi_0^+]\!]_{\ell} \\
&\sim [\![\chi_1^+,\,\ldots,\,\chi_p^+,\,\chi_{p+2}^+,\,\ldots,\,\chi_{N}^+,\,\chi_{N}^+,\,\ldots,\chi_1^+,\chi_0^+]\!]_{\ell}\sim K_{p+1}E_{\delta-\alpha_{p+1}},
\end{align*}
where we have used
\begin{align*}
[\![\chi_{p-1}^+,\,\chi_p^+,\,\chi_{p-1}^+,\,\ldots,\,\chi_0^+]\!]_{\ell}
&=[\chi_{p-1}^+,\,[\![\chi_p^+,\,\chi_{p-1}^+,\,\ldots,\,\chi_0^+ ]\!]_r ] \\
&=[\![ [\chi_{p-1}^+,\, [\chi_p^+,\,\chi_{p-1}^+]_{q^{d_i}}]_{q^{d_{i-1}}},\,\chi_{p-2}^+,\,\ldots,\,\chi_0^+ ]\!]_r=0.
\end{align*}

Next, we check relation \eqref{useful:8}.
By relation \eqref{useful:7}, we have for $i\neq n$,
\begin{gather*}
T_{\omega_i}([\![\chi_i^-,\,T_{\omega_i}^{-1}(\chi_i^-)]\!])=[\![T_{\omega_i}(\chi_i^-),\,\chi_i^-]\!]\sim K_i[E_{\delta-\alpha_i},\,F_{\alpha_i}]=0.
\end{gather*}
Then $[\![T_{\omega_i}^{-1}(\chi_i^+),\,\chi_i^+]\!]=0$ by $\Omega$. By commuting it with $\chi_i^-$ and applying relation \eqref{useful:3}, one promptly obtains

\begin{gather*}
T_{\omega_i}^{-1}(\chi_i^+)=c_i[A_{ii}]_i^{-1}[E_{\delta^{(i)}},\,\chi_i^+]= c_iE_{\delta+\alpha_i}.
\end{gather*}
\end{proof}

\begin{lemma}\label{LM:T6}
It holds for  $i=1,...,N$ in $U_q[\mathfrak{osp}(2m+1|2n)^{(1)}]$,
\begin{gather}\label{useful:9}
[\widetilde{E}_{\delta^{(i)}},\,\widetilde{E}_{2\delta^{(i)}}]=[E_{\delta^{(i)}},\,E_{2\delta^{(i)}}]=0.
\end{gather}
\end{lemma}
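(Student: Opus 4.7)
The proof splits naturally into two parts. First, I would reduce the double identity \eqref{useful:9} to a single commutator statement by expanding the exponential generating function in (E5). Comparing coefficients of $z^{-1}$ and $z^{-2}$ on both sides of
\[
\exp\!\Big((q_i-q_i^{-1})\sum_{r>0}E_{r\delta^{(i)}}z^{-r}\Big)=1+(q_i-q_i^{-1})\sum_{r>0}\widetilde{E}_{r\delta^{(i)}}z^{-r}
\]
immediately yields $\widetilde{E}_{\delta^{(i)}}=E_{\delta^{(i)}}$ and $\widetilde{E}_{2\delta^{(i)}}=E_{2\delta^{(i)}}+\tfrac{1}{2}(q_i-q_i^{-1})E_{\delta^{(i)}}^{2}$. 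Since $E_{\delta^{(i)}}$ trivially commutes with $E_{\delta^{(i)}}^{2}$, the two commutators in \eqref{useful:9} coincide, and it remains to prove $[E_{\delta^{(i)}},E_{2\delta^{(i)}}]=0$.

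For the vanishing, my plan is to invoke Lemma \ref{useful}: the element $[E_{\delta^{(i)}},E_{2\delta^{(i)}}]$ lies in $U_q^{+}$ (both factors are constructed in $U_q^{+}$ via Definition \ref{q-roots}, and both have parity $0$ since $[\alpha_i]+[\delta-\alpha_i]=0$), so it suffices to show that it commutes with $\chi_j^{-}$ for every $j=0,1,\ldots,N$. Using the ordinary Jacobi identity (no extra signs arise here since $E_{r\delta^{(i)}}$ is even),
\[
[[E_{\delta^{(i)}},E_{2\delta^{(i)}}],\chi_j^{-}]=[[E_{\delta^{(i)}},\chi_j^{-}],E_{2\delta^{(i)}}]+[E_{\delta^{(i)}},[E_{2\delta^{(i)}},\chi_j^{-}]],
\]
so the task reduces to computing $[E_{r\delta^{(i)}},\chi_j^{-}]$ for $r\in\{1,2\}$ and all $j$, and then checking cancellation in the two resulting Jacobi pieces.

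Concretely, I would first dispose of the orthogonal cases. When $A_{ij}=0$, an induction on the construction (E4)--(E5) of $E_{r\delta^{(i)}}$ shows $[E_{r\delta^{(i)}},\chi_j^{-}]=0$, after which both terms above vanish. For $j\ne i$ with $A_{ij}\ne0$, I would generalise the computation inside the proof of Lemma \ref{LM:T4} (where $[E_{\delta^{(1)}},\chi_2^{-}]=-[A_{12}]_1K_2E_{\delta-\alpha_2}$ was derived from relation \eqref{useful:5}): commuting the $r=2$ analogue of \eqref{useful:5} with $E_{\alpha_i}$ and using \eqref{eq:KE} produces $[E_{2\delta^{(i)}},\chi_j^{-}]$ as a scalar multiple of $K_jE_{2\delta-\alpha_j}$ plus a correction proportional to $E_{\delta^{(i)}}[E_{\delta^{(i)}},\chi_j^-]$, whereupon the two sides of the Jacobi expansion coalesce via the defining relation $E_{2\delta-\alpha_j}=-b_{\alpha_j}[E_{\delta^{(j)}},E_{\delta-\alpha_j}]$ in (E4).

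The main obstacle will be the diagonal case $j=i$, where one cannot appeal to orthogonality and must unpack the exponential (E5). My intended route is to exploit Lemma \ref{LM:T1}\,\eqref{useful:1}: the relation $[\chi_i^{-},K_iT_{\omega_i}(\chi_i^{-})]=0$ combined with Lemma \ref{LM:T4}\,\eqref{useful:7} gives $[\chi_i^{-},E_{\delta-\alpha_i}]=0$ (after stripping the central and Cartan factors), and iterating $T_{\omega_i}$ propagates this to $[\chi_i^{-},E_{r\delta-\alpha_i}]$. Applying $\Omega$ and commuting with $E_{\alpha_i}$ then yields a closed formula for $[E_{r\delta^{(i)}},\chi_i^{-}]$ that is skew-symmetric under $r\leftrightarrow s$, forcing the Jacobi expansion to vanish. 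Throughout, the sign conventions attached to the odd node $\alpha_n$ require careful tracking, but the super-skew-symmetry built into the definitions (E4) and (E5) supplies the required cancellation.
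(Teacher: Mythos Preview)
Your reduction of the double identity to the single statement $[E_{\delta^{(i)}},E_{2\delta^{(i)}}]=0$ via the exponential in (E5) is correct and matches what the paper uses implicitly.

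The main strategy, however, diverges from the paper's and contains a real gap. You propose to invoke Lemma \ref{useful} and verify $[[E_{\delta^{(i)}},E_{2\delta^{(i)}}],\chi_j^-]=0$ for every $j=0,1,\ldots,N$, disposing first of the ``orthogonal'' indices via the claim that $[E_{r\delta^{(i)}},\chi_j^-]=0$ whenever $A_{ij}=0$. That claim fails for $j=0$: every imaginary root vector $E_{r\delta^{(i)}}$ is built (through Definition \ref{q-roots}) from \emph{all} positive Chevalley generators, including $\chi_0^+=E_{\delta-\theta}$, so the maps $r_0$ and ${}_0r$ of Lemma \ref{ri} do not annihilate it. Hence $[E_{r\delta^{(i)}},\chi_0^-]\ne 0$ even when $A_{i0}=0$, and neither Jacobi term vanishes separately; the necessary cancellation for $j=0$ is unaccounted for. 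Your treatment of the diagonal case $j=i$ is also too loose: ``iterating $T_{\omega_i}$'' does not directly give $[\chi_i^-,E_{r\delta-\alpha_i}]=0$ (Lemma \ref{LM:T4} only covers $i\ne n$, and one would need to know how $T_{\omega_i}$ moves $E_{\delta-\alpha_i}$), and the claimed skew-symmetry of $[E_{r\delta^{(i)}},\chi_i^-]$ in $r$ is not established. Finally, the ``$r=2$ analogue of \eqref{useful:5}'' you invoke for adjacent $j$ is not available in the paper and would itself require proof.

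The paper proceeds quite differently and never touches Lemma \ref{useful}. It first shows, using Lemma \ref{LM:T4}, that $E_{\delta^{(i)}}$ is fixed by $T_{\omega_i}$; this lets one compute $T_{\omega_i}^2(F_{\alpha_i})=c_iK_\delta^{-1}K_iE_{2\delta-\alpha_i}$ and then that $\widetilde{E}_{2\delta^{(i)}}$ is also $T_{\omega_i}$-invariant. With the resulting expression $\widetilde{E}_{2\delta^{(i)}}\propto[\![E_{\alpha_i},[E_{\delta^{(i)}},E_{\delta-\alpha_i}]]\!]$ (same index $i$ throughout), a direct manipulation in the style of \cite[Prop.~5.1]{LYZ} yields the self-referencing identity
\[
[\widetilde{E}_{2\delta^{(i)}},\widetilde{E}_{\delta^{(i)}}]=[2A_{ii}]_i(q_i^{-1}-q_i)\,[\widetilde{E}_{\delta^{(i)}},\widetilde{E}_{2\delta^{(i)}}],
\]
which forces the commutator to vanish. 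No case-by-case analysis over $j$ is required.
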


\begin{proof}
If $A_{ii}\neq0$, then by Lemma \ref{LM:T4} and the defining relations of (E1) and (E4), we have
 \begin{align*}
   \widetilde{E}_{\delta^{(i)}}=E_{\delta^{(i)}}&=q^{-(\alpha_i,\alpha_i)}[\![E_{\alpha_i},\,E_{\delta-\alpha_i}]\!] \\
   &=q^{-(\alpha_i,\alpha_i)}T_{\omega_i}\big([\![E_{\delta+\alpha_i},\,K_i^{-1}F_{\alpha_i}]\!]\big)\\
   &=T_{\omega_i}\big(-[A_{ii}]_i^{-1}K_i^{-1}[[E_{\delta^{(i)}}, F_{\alpha_i}], E_{\alpha_i}]\big) \\
   &=T_{\omega_i}(E_{\delta^{(i)}}).
 \end{align*}Applying $T_{\omega_{i}}$ to
\begin{gather*}
[\widetilde{E}_{\delta^{(i)}},\,\chi_i^-]=[E_{\delta^{(i)}},\,F_{\alpha_i}]=-[A_{ii}]_{i}K_{i}E_{\delta-\alpha_{i}},
\end{gather*}
we obtain
\begin{gather*}
T_{\omega_{i}}^2(\chi_{i}^-)=T_{\omega_{i}}^2(F_{\alpha_i})=-c_{i}[A_{ii}]_{i}^{-1} [\widetilde{E}_{\delta^{(i)}},\,T_{\omega_{i}}(F_{\alpha_i})]
=c_{i}K_{\delta}^{-1}K_{i}E_{2\delta-\alpha_{i}}.
\end{gather*}
This implies that
\begin{gather*}
T_{\omega_{i}}^{-1}\left(\widetilde{E}_{2\delta^{(i)}}\right)
=c_{i}K_{\delta}^{-1}K_{i}E_{2\delta-\alpha_{i}}
=K_{\delta}^{-1}[T_{\omega_{i}}^{-1}(E_{\alpha_i}),\,T_{\omega_{i}}(F_{\alpha_i})]
=[E_{\alpha_i},\,T_{\omega_{i}}^2(F_{\alpha_i})]=\widetilde{E}_{2\delta^{(i)}},
\end{gather*}where we have used Lemma \ref{LM:T1}.

Now, as the same calculation as outlined in \cite[Proposition 5.1]{LYZ}, one has
 \begin{align*}
   [\widetilde{E}_{2\delta^{(i)}},\widetilde{E}_{\delta^{(i)}}]=q^{-(\alpha_i,\alpha_i)}[A_{ii}]_i^{-1}
   [[\![E_{\alpha_i},[E_{\delta^{(i)}},E_{\delta-\alpha_i}]]\!],E_{\delta^{(i)}}]=[2A_{ii}]_i(q_i^{-1}-q_i)[\widetilde{E}_{\delta^{(i)}},\widetilde{E}_{2\delta^{(i)}}].
 \end{align*}This implies that
\begin{gather*}
[\widetilde{E}_{\delta^{(i)}},\,\widetilde{E}_{2\delta^{(i)}}]=[E_{\delta^{(i)}},\,E_{2\delta^{(i)}}]=0,
\end{gather*}

If $A_{ii}=0$, then we can apply the same arguments as above to derive the relation \eqref{useful:9}. Note that the defining relations in (E4) have been utilized.

\end{proof}

\begin{lemma}\label{LM:T5}
For $i,j=1,\ldots,N$ with $i\neq j$ in $U_q[\mathfrak{osp}(2m+1|2n)^{(1)}]$, the relation
\begin{gather}\label{useful:10}
[E_{\alpha_i},\,K_jE_{\delta-\alpha_j}]=[F_{\alpha_i},\,F_{\delta-\alpha_j}K_j^{-1}]=0
\end{gather}
holds.
\end{lemma}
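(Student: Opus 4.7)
The plan is to reduce the commutation identity to the trivial Drinfeld--Jimbo relation $[\chi_i^+,\chi_j^-]=0$ (valid for $i\neq j$ by \eqref{DJ3}) via the braid group automorphism $T_{\omega_j}$. Recall from Section~4.2 that every $T_{\omega_j}$ is a genuine automorphism of $U_q[\mathfrak{osp}(2m+1|2n)^{(1)}]$ that fixes $\chi_i^\pm$ for each $i\neq j$, so applying $T_{\omega_j}$ to \eqref{DJ3} immediately yields
\[
0\;=\;T_{\omega_j}[\chi_i^+,\chi_j^-]\;=\;[\chi_i^+,\,T_{\omega_j}(\chi_j^-)].
\]
The lemma will follow at once once $T_{\omega_j}(\chi_j^-)$ is identified (up to a nonzero scalar and the central element $K_\delta$) with $K_j E_{\delta-\alpha_j}$.

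For $j\in\{1,\ldots,N\}\setminus\{n\}$, Lemma~\ref{LM:T4} provides exactly this identification: $T_{\omega_j}(\chi_j^-)=c_j K_\delta^{-1}K_j E_{\delta-\alpha_j}$ for some nonzero $c_j$. Since $K_\delta^{\pm 1}$ is central and of parity zero, it slides past $\chi_i^+$ through the super-commutator, so the displayed vanishing becomes $[E_{\alpha_i},K_j E_{\delta-\alpha_j}]=0$.

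For the remaining case $j=n$, I would first establish the analogous identification $T_{\omega_n}(\chi_n^-)=c_n K_\delta^{-1}K_n E_{\delta-\alpha_n}$ by replicating the inductive scheme of the proof of Lemma~\ref{LM:T4}: begin with the base computation of $T_{\omega_1}(\chi_1^-)$ from Proposition~\ref{Braid_Ya} together with the explicit formulas in Appendix~A, and then propagate along the Dynkin diagram via $T_{\omega_{p+1}}=T_{\omega_{p-1}}^{-1}T_{\omega_p}T_p^{-1}T_{\omega_p}T_p^{-1}$, tracking the quantum brackets step by step. Once this identification is in hand, the same braid-group reduction applies verbatim. Finally, the companion identity $[F_{\alpha_i},F_{\delta-\alpha_j}K_j^{-1}]=0$ is obtained by applying the anti-automorphism $\Omega$ of Proposition~\ref{anti-auto} to the first, since $\Omega$ exchanges $E_\alpha$ with a scalar multiple of $F_\alpha$ and $K_\alpha$ with $K_\alpha^{-1}$ while reversing the order of multiplication.

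The main obstacle is the identification of $T_{\omega_n}(\chi_n^-)$ for the odd isotropic index $n$, which was deliberately excluded in Lemma~\ref{LM:T4}. Here the super-signs $(-1)^{[\alpha_n]}$ and the scalars coming from the inner $q$-commutators $[\![\cdot,\cdot]\!]$ both bifurcate around the node $n$, and the bookkeeping required to confirm that the resulting expression is indeed a nonzero scalar multiple of $K_\delta^{-1}K_n E_{\delta-\alpha_n}$ is the only nontrivial piece; every other step uses only the relations collected in Section~4 and the definitions in Section~3.1.
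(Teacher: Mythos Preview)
Your approach is essentially identical to the paper's: the paper writes
\[
[E_{\alpha_i},K_jE_{\delta-\alpha_j}]
=T_{\omega_j}\bigl[T_{\omega_j}^{-1}(\chi_i^+),\,T_{\omega_j}^{-1}(K_jE_{\delta-\alpha_j})\bigr]
=c_j^{-1}K_\delta\,T_{\omega_j}[\chi_i^+,\chi_j^-]=0,
\]
invoking Lemma~\ref{LM:T4} for the middle step and then applying $\Omega$ for the companion identity, exactly as you propose. You are in fact more scrupulous than the paper on the point $j=n$: the paper's one-line proof cites Lemma~\ref{LM:T4} without comment even though that lemma, as stated, excludes the isotropic index, whereas you explicitly flag the need to extend the identification $T_{\omega_n}(\chi_n^-)\sim K_\delta^{-1}K_nE_{\delta-\alpha_n}$ and sketch how to do it.
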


\begin{proof}By Lemma \ref{LM:T4}, we have
\begin{align*}
 [E_{\alpha_i},\,K_jE_{\delta-\alpha_j}]
 =T_{\omega_j}[T_{\omega_j}^{-1}(\chi_i^+), T_{\omega_j}^{-1}(K_jE_{\delta-\alpha_j})]
 =c_j^{-1}K_{\delta}T_{\omega_j}[\chi_i^+, \chi_j^-]
 =0.
\end{align*}Applying the operator $\Omega$, it is easy to see that
$[F_{\alpha_i},\,F_{\delta-\alpha_j}K_j^{-1}]=0$.
\end{proof}

At the end of this section, we need to emphasize that we used the braid group action to provide some relations among the quantum root vectors in the above lemmas, which play a crucial role in the proof of the subsequent Theorem \ref{Main1}; c.f, Section 5.1-5.2.

 }

\section{The proof of Theorem \ref{Main1}}

To achieve our objective, we will construct a new superalgebra denoted as $\mathcal{U}'$, which is, in fact, isomorphic to two presentations of the quantum affine superalgebra $\mathfrak{osp}(2m+1|2n)^{(1)}$ simultaneously. The introduction of the superalgebra $\mathcal{U}'$ proceeds as follows. Without ambiguity, we use the same notations as the current generators of $\mathcal{U}_q[\mathfrak{osp}(2m+1|2n)^{(1)}]$.

\begin{definition}
The superalgebra $\mathcal{U}'$ is an $\mathbb{C}(q^{1/2})$-associative superalgebra generated by $k_i^{\pm}$, $x_{i,0}^{\pm}$, $x_{i,1}^{-}$, $x_{i,-1}^+$ for $i=1,\ldots,N$ and the central element $q^{\pm\frac{1}{2}c}$, satisfying the following relations,
\begin{align}\label{S1}
&q^{\pm\frac{c}{2}}q^{\mp\frac{c}{2}}=k_i^{\pm 1}k_i^{\mp 1}=1,\quad k_ik_j=k_jk_i,\\ \label{S2}
&k_ix_{j,0}^{\pm}k_i^{-1}=q_i^{\pm A_{ij}}x_{j,0}^{\pm},\quad
k_ix_{j,\pm1}^{\mp}k_i^{-1}=q_i^{\mp A_{ij}}x_{j,\pm1}^{\mp}, \\ \label{S3}
&[\,x_{i,0}^+,\,x_{j,0}^-\,]=\delta_{ij}\frac{k_i-k_i^{-1}}{q_i-q_i^{-1}},\quad
[\,x_{i,-1}^+,\,x_{j,1}^-\,]=\delta_{ij}\frac{q^{-c}k_i-q^c k_i^{-1}}{q_i-q_i^{-1}},\\ \label{S4}
&[\![x_{i,\mp1}^{\pm},\,x_{j,0}^{\pm}]\!]
+[\![x_{j,\mp1}^{\pm},\,x_{i,0}^{\pm}]\!]=0,\\ \label{S5}
&[x_{i,0}^{\pm},\,x_{j,0}^{\pm}]=[x_{i,0}^{\pm},\,x_{j,\mp1}^{\pm}]
=[x_{i,\mp1}^{\pm},\,x_{j,\mp1}^{\pm}]=0,~~~~\textrm{if}~~~~A_{ij}=0, \\ \label{S6}
&[\![x_{i,0}^{\pm},[\![x_{i,0}^{\pm},\,x_{i+\ell,0}^{\pm}]\!] ]\!]=0~~~~\textrm{for}~~i\neq n,N,\ \ell=\pm 1, \\ \label{S7}
&[\![x_{N,0}^{\pm},\,[\![x_{N,0}^{\pm},[\![x_{N,0}^{\pm},\,x_{N-1,0}^{\pm}]\!] ]\!] ]\!]=0, \\ \label{S8}
&[ [\![ [\![x_{n-1,0}^{\pm},\,x_{n,0}^{\pm}]\!],\,x_{n+1,0}^{\pm}]\!],\,x_{n,0}^{\pm}]=0
~~~~\textrm{for}~~n>1, \\ \label{S9}
&\Big[[x_{i,0}^{\pm},\,x_{i,\pm1}^{\mp}],\,\big[[[x_{i,0}^{\pm},\,x_{i,\pm1}^{-\mp}],\,x_{i,0}^{\pm}] _{q_i^{A_{ii}}},\,x_{i,\pm1}^{\mp}\big]_{q_i^{-A_{ii}}}\Big]=0~~~~\text{for }i\neq n.
\end{align}
\end{definition}

It is evident that $\mathcal{U}'$ is generated by a finite set of generators and relations, resulting in significantly fewer elements than the Drinfeld presentation from which it originates, where relation \eqref{S9} seems that new.

We introduce in $\mathcal{U}'$ the parameter $\epsilon=\pm$ or $\pm 1$,
\begin{align}\label{S10}
&\Phi_{i,0}^{\pm}=k_i^{\pm 1}, \\ \label{S11}
&\Phi_{i,1}^+=(q_i-q_i^{-1})q^{\frac{1}{2}c}[x_{i,0}^+,\,x_{i,1}^{-}],\quad \Phi_{i,-1}^-=-(q_i-q_i^{-1})q^{-\frac{ 1}{2}c}[x_{i,-1}^+,\,x_{i,0}^-],  \\ \label{S12}
&x_{j,\epsilon}^{\epsilon}=(q_i^{A_{ij}}-q_i^{-A_{ij}})^{-1}q^{\frac{\epsilon c}{2}}[k_i^{-\epsilon}\Phi_{i,\epsilon}^{\epsilon},\,x_{j,0}^{\epsilon}], \\ \label{S13}
&x_{j,\epsilon(r+1)}^{\pm}=\pm\epsilon(q_i^{A_{ij}}-q_i^{-A_{ij}})^{-1}q^{\pm\frac{1}{2}c}
[k_i^{-\epsilon}\Phi_{i,\epsilon}^{\epsilon},\,x_{j,\epsilon r}^{\pm}], \\\label{S14}
&\Phi_{j,r}^+=(q_j-q_j^{-1})q^{\frac{r}{2}c}[x_{j,0}^+,\,x_{j,r}^{-}],\quad \Phi_{j,-r}^-=-(q_j-q_j^{-1})q^{-\frac{ r}{2}c}[x_{j,-r}^+,\,x_{j,0}^-],
\end{align}	
where $r>0$, $i$ is obtained from $j\in\{1,...,N\}$ via
      \begin{equation*}
      i=\begin{cases}
      2, &\text{if }n=1, j=1, \\
      j, &\text{if }n>1, j=n+1,1, \\
      j-1, &\text{if }n=1\text{ and }j>2\text{ or }n>1\text{ and } j\neq n+1.
      \end{cases}
      \end{equation*}
For example, if $(m,n)=(2,1)$, then the generators $x_{i,\epsilon r}^{\pm}$ for $i=1,2,3$ are defined from $\Phi_{2,\epsilon}^{\epsilon}$ by induction on $r$; if $(m,n)=(1,2)$, then the generators $x_{i,\epsilon r}^{\pm}$ for $i=1,2$ are defined from $\Phi_{1,\epsilon}^{\epsilon}$ by induction on $r$, while $x_{3,\epsilon r}^{\pm}$ are defined from $\Phi_{3,\epsilon}^{\epsilon}$ by induction on $r$.

Given a fixed $i$ such that $1\leqslant i\leqslant N$, we define the elements $a_{i,\pm\ell}\,(\ell\geqslant 1)$ recursively by
\begin{gather*}
   \pm (q_i-q_i^{-1})\sum\limits_{r\geqslant 1}a_{i,\pm r}z^{\mp r}=
   \ln\left(1+\sum\limits_{r\geqslant 1}k_i^{\mp 1}\Phi_{i,\pm r}^{\pm}z^{\mp r}\right).
\end{gather*}

\begin{theorem}\label{the5.1}
Under the construction of the superalgebra $\mathcal{U}'$,
we have
\begin{gather*}
(a)\quad \mathcal{U}_q[\mathfrak{osp}(2m+1|2n)^{(1)}]\simeq \mathcal{U}',\\
(b)\quad U_q[\mathfrak{osp}(2m+1|2n)^{(1)}]\simeq \mathcal{U}'.
\end{gather*}
\end{theorem}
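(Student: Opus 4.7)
The plan is to establish (a) and (b) by constructing mutually inverse superalgebra homomorphisms in each case, using the minimal presentation $\mathcal{U}'$ as a common bridge. Since relations \eqref{S1}--\eqref{S8} of $\mathcal{U}'$ are just the Drinfeld relations \eqref{DE1}--\eqref{DE10} restricted to modes $0,\pm 1$, only the novel relation \eqref{S9} requires genuinely new verification, while the inverse direction requires recovering all higher modes inductively from the minimal generating set via \eqref{S12}--\eqref{S14}.

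For part (a), I would first define $\varphi\colon \mathcal{U}'\to \mathcal{U}_q[\mathfrak{osp}(2m+1|2n)^{(1)}]$ sending each minimal generator to the element of the same name in $\mathcal{U}_q$. Well-definedness of \eqref{S1}--\eqref{S8} is immediate from \eqref{DE1}--\eqref{DE10}; for \eqref{S9} I would expand the nested bracket using \eqref{DE5}--\eqref{DE6} and the imaginary-mode formula \eqref{DE11}, reducing it to a consequence of the higher Serre relations. For the converse $\psi\colon \mathcal{U}_q\to \mathcal{U}'$, the images of $x_{j,r}^\pm$ and $a_{j,r}$ for $|r|\geqslant 2$ are defined by the recursive rules \eqref{S12}--\eqref{S14} and the logarithmic expansion. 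Verifying the full list \eqref{DE1}--\eqref{DE10} becomes an induction on $|r|+|s|$: relations \eqref{DE3}, \eqref{DE4} require showing that the defined $a_{i,r}$ satisfy the correct Heisenberg-type commutators with all modes of $x_{j,s}^\pm$; relations \eqref{DE5}--\eqref{DE7} reduce to the base cases via bracket identities in $\mathcal{U}'$; and the higher Serre relations \eqref{DE8}--\eqref{DE10} follow from \eqref{S6}--\eqref{S9} by inductively pushing mode indices using \eqref{S13}.

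For part (b), I would define $\Phi\colon \mathcal{U}'\to U_q[\mathfrak{osp}(2m+1|2n)^{(1)}]$ by sending
\[
k_i\mapsto K_i,\quad q^{\pm c/2}\mapsto K_\delta^{\pm 1/2},\quad x_{i,0}^\pm\mapsto \chi_i^\pm,\quad x_{i,1}^-\mapsto K_\delta^{-1}K_iE_{\delta-\alpha_i},\quad x_{i,-1}^+\mapsto (-d_i)d_{i+1}F_{\delta-\alpha_i}K_\delta K_i^{-1},
\]
matching the prescription of $\Psi$. Relations \eqref{S1}--\eqref{S2} and \eqref{S5}--\eqref{S8} are standard; the critical computations are \eqref{S3}, \eqref{S4}, and \eqref{S9}, which are precisely where Lemmas \ref{LM:T1}--\ref{LM:T6} enter. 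Relation \eqref{S4} follows from \eqref{useful:2} combined with the definition of quantum root vectors, while \eqref{S3} follows from \eqref{eq:EF2} together with Lemma \ref{LM:T3}. Relation \eqref{S9} translates, after applying $\Phi$, into a statement of the form $[E_{\delta^{(i)}},E_{2\delta^{(i)}}]=0$, which is exactly Lemma \ref{LM:T6}. For the inverse $U_q\to\mathcal{U}'$, I would send $\chi_i^\pm\mapsto x_{i,0}^\pm$, $K_i\mapsto k_i$, and express $\chi_0^\pm$ in terms of iterated brackets of $x_{i,\pm 1}^\mp$ with the $x_{j,0}^\pm$ according to the quantum root vector recipe of Definition \ref{q-roots}; the Drinfeld--Jimbo Serre relations \eqref{DJ5}--\eqref{DJ10} involving index $0$ then reduce to instances of \eqref{S6}--\eqref{S9} in $\mathcal{U}'$.

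The main obstacle will be the verification of \eqref{S9} on both sides: on the $\mathcal{U}_q$ side it encodes a hidden consequence of the higher-mode Serre relations with no short proof unless one invokes the imaginary commutation $[E_{\delta^{(i)}},E_{2\delta^{(i)}}]=0$; on the $U_q$ side it requires carefully tracking $q$-coefficients through several nested applications of the braid identities from Section 4. A secondary obstacle is the induction that propagates \eqref{S12}--\eqref{S14} consistently, since the choice of index $i$ from $j$ depends on the location of $n$ in the Dynkin diagram, so the induction must be set up case by case (namely $n=1$ versus $n>1$, and $j=n+1$ versus otherwise), and one must verify that the resulting $x_{j,r}^\pm$ are independent of the chosen inductive route. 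Once these are in place, composing $\Phi$ with the inverse of the natural map to $\mathcal{U}_q$ yields the isomorphism $\Psi$ of Theorem \ref{Main1}.
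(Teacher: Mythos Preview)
Your proposal is correct and follows essentially the same route as the paper: for (a) you build the obvious inclusion $\mathcal{U}'\to\mathcal{U}_q$ and then recover the full Drinfeld relations in $\mathcal{U}'$ by induction on the mode via the recursions \eqref{S12}--\eqref{S14} (the paper calls these maps $\Theta_1,\Theta_2$ and carries out the induction first for the low-rank case $(m,n)=(1,1)$, then bootstraps to higher rank exactly along the $n=1$ versus $n>1$ case split you anticipate); for (b) your map $\Phi$ is the paper's $\Psi_1$, and your identification of \eqref{S9} with Lemma~\ref{LM:T6} and of \eqref{S3}--\eqref{S4} with Lemmas~\ref{LM:T2}--\ref{LM:T3} and \ref{LM:T1} is precisely how the paper verifies that $\Psi_1$ is a homomorphism, while your description of the inverse (sending $\chi_0^\pm$ to an iterated $q$-bracket of $x_{1,\pm1}^\mp$ with the $x_{j,0}^\mp$) is the paper's $\Psi_2$.
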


Next, we will engage in preliminary discussions before proving the above theorem.

\vspace{6pt}
{{\bf Discussion of Theorem \ref{the5.1}-(a)}}: we define a map $\Theta_1$ as follows
\begin{gather}
\Theta_1 : \mathcal{U}'\rightarrow\mathcal{U}_q[\mathfrak{osp}(2m+1|2n)^{(1)}]
\end{gather}
with $\Theta_1(X)=X$ for $X\in\mathcal{U}'$. It is evident that $\Theta_1$ qualifies as an algebra homomorphism, as the generators and relations of $\mathcal{U}'$ form a subset of the defining relations of $\mathcal{U}_q[\mathfrak{osp}(2m+1|2n)^{(1)}]$.

By the definition relations, we can also define a map $\Theta_2: \mathcal{U}_q[\mathfrak{osp}(2m+1|2n)^{(1)}]\rightarrow\mathcal{U}'$ such as
\begin{equation}
  x_{i,k}^{\pm}\mapsto x_{i,k}^{\pm},\quad
  a_{i,r}\mapsto a_{i,r},\quad
  k_i\mapsto k_i,\quad
  q^{\pm\frac{1}{2}c}\mapsto q^{\pm\frac{1}{2}c}.
\end{equation}
 It is evident that this map is surjective. Our goal is to demonstrate that $\Theta_2$ qualifies as a homomorphism, and their composites $\Theta_1\circ\Theta_2$ and $\Theta_2\circ\Theta_1$ act as identity maps.

There is a lemma that plays a pivotal role in achieving our desired result. Let us denote
\begin{align*}
 \widetilde{\Phi}_{i,\pm r}^{\pm}=\frac{k_i^{\mp1}}{q_i-q_i^{-1}}\Phi_{i,\pm r}^{\pm}.
\end{align*}
Note that a partition of $r$ is a decreasing sequence of positive integers $r_1\geq r_2\geq...\geq r_k$ such that $r_1+r_2+...+r_k=r$, and let $l(r)=k$. For all possible partitions of $r$, we express $\widetilde{\Phi}_{i,\pm r}^\pm$ as $a_{i,\pm r}+C\sum\limits_{k>1}a_{1,\pm r_1}a_{1,\pm r_2}...a_{1,\pm r_k}$ with $C\in\mathbb{C}(q^{1/2})$.

\begin{lemma}\label{LM5.1}
(1) Suppose \eqref{DE5} holds for $0<\pm(k+l)\leq r$, and either \eqref{DE6} holds for $k=l$ or \eqref{DE7} holds for $k=l-1$ in $\mathcal{U}'$. Then, if $r>0$ and $A_{ij}\neq0$, we obtain
\begin{align*}
&[\widetilde{\Phi}_{i,r}^+,\,x_{j,k}^{\pm}]=q^{\mp\frac{1}{2}c}\big(~q_i^{\pm A_{ij}}\widetilde{\Phi}_{i,r-1}^+ x_{j,k+1}^{\pm}
-q_i^{\mp A_{ij}}x_{j,k+1}^{\pm} \widetilde{\Phi}_{i,r-1}^+~\big),\\
&[\widetilde{\Phi}_{i,-r}^-,\,x_{j,k}^{\pm}]
=q^{\pm\frac{1}{2}c}\big(q_i^{\mp A_{ij}}x_{j,k-1}^{\pm} \widetilde{\Phi}_{i,1-r}^--
q_i^{\pm A_{ij}}\widetilde{\Phi}_{i,1-r}^- x_{j,k-1}^{\pm}\big).
\end{align*}
If $r>0$ and $A_{ij}=0$, then we have
\begin{align*}
&[\widetilde{\Phi}_{i,r}^+,\,x_{j,k}^{\pm}]
=[\widetilde{\Phi}_{i,-r}^-,\,x_{j,k}^{\pm}]
=0.
\end{align*}

(2) Suppose the relations from (1) involving $\widetilde{\Phi}_{i,\pm r}^\pm$ and $x_{j,k}^{\pm}$ hold in $\mathcal{U}'$. If
\begin{align*}
&[\widetilde{\Phi}_{i,\pm r'}^\pm,\,\widetilde{\Phi}_{i,\pm r}^\pm]=0, \quad r', r\in \mathbb{Z}_{>0},
\end{align*}
and
\begin{align*}
&[\widetilde{\Phi}_{i,\pm1}^\pm,\,x_{j,k}^{\pm}]=\pm[A_{ij}]_iq^{\mp\frac{1}{2}c}x_{j,k+1}^{\pm}, \quad k\in\mathbb{Z}.
\end{align*}
Then, for $r\in\mathbb{Z}_{>0}$ and $k\in\mathbb{Z}$, we obtain
\begin{align*}
&[a_{i,\pm r},\,x_{j,k}^{\pm}]=\pm\frac{[rA_{ij}]_i}{r}q^{\mp\frac{r}{2}}x_{j,k+r}^{\pm}.
\end{align*}

\end{lemma}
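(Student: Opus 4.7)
The plan is to prove (1) by induction on $r \geq 1$, and then derive (2) from (1) via the exponential/logarithmic relationship between $\widetilde{\Phi}_{i,\pm r}^\pm$ and $a_{i,\pm r}$. The essential inputs are the recursive definitions \eqref{S12}--\eqref{S14} together with the hypothesized commutation relations \eqref{DE5}--\eqref{DE7} and the super-Jacobi identities listed at the start of Section~3.

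For part (1), the base case $r=1$ reduces to a direct verification from the definitions \eqref{S11},\eqref{S14}: one has $\widetilde{\Phi}_{i,1}^+ = k_i^{-1}q^{c/2}[x_{i,0}^+,\,x_{i,1}^-]$ (up to the overall scalar $1/(q_i-q_i^{-1})$), and expanding the super bracket $\bigl[[x_{i,0}^+,\,x_{i,1}^-],\,x_{j,k}^\pm\bigr]$ via the $q$-Jacobi identities reduces it to the two inner brackets $[x_{i,1}^-,\,x_{j,k}^\pm]$ and $[x_{i,0}^+,\,x_{j,k}^\pm]$. The former is evaluated by the hypothesized \eqref{DE5}, producing a $\Phi$-term when $i=j$ and vanishing when $i\neq j$; the latter is controlled by the hypothesized \eqref{DE6}/\eqref{DE7}, which precisely feeds back into the right-hand side of the claimed formula. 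For the inductive step, I would invoke \eqref{S13} to rewrite $x_{j,k+1}^\pm = \pm(q_i^{A_{ij}}-q_i^{-A_{ij}})^{-1}q^{\pm c/2}[k_i^{-1}\Phi_{i,1}^+,\,x_{j,k}^\pm]$, substitute into the recursion, and telescope using the lower-$r$ formula; the case $A_{ij}=0$ is the simultaneous vanishing of both inner brackets.

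For part (2), the defining identity
\begin{equation*}
\pm(q_i-q_i^{-1})\sum_{r\ge 1}a_{i,\pm r}z^{\mp r} = \ln\Bigl(1 + (q_i-q_i^{-1})\sum_{r\ge 1}\widetilde{\Phi}_{i,\pm r}^\pm z^{\mp r}\Bigr)
\end{equation*}
expresses each $a_{i,\pm r}$ as a polynomial in $\{\widetilde{\Phi}_{i,\pm s}^\pm : 1\leq s\leq r\}$. The hypothesis $[\widetilde{\Phi}_{i,\pm r'}^\pm,\,\widetilde{\Phi}_{i,\pm r}^\pm]=0$ allows us to treat this polynomial as commutative when taking a bracket with $x_{j,k}^\pm$, so the Leibniz rule applies cleanly. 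Applying part (1) termwise and reading off coefficients in the log-expansion, the recursion in (1) exponentiates to exactly the mode-shift action that, after extracting the logarithm, produces the closed form $\pm[rA_{ij}]_i/r \cdot q^{\mp rc/2}\,x_{j,k+r}^\pm$. The base case $[\widetilde{\Phi}_{i,\pm 1}^\pm,\,x_{j,k}^\pm]=\pm[A_{ij}]_iq^{\mp c/2}x_{j,k+1}^\pm$ supplied as an assumption of (2) anchors this generating-function comparison, and the $A_{ij}=0$ case is vacuous.

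The main obstacle will be the careful bookkeeping of $\mathbb{Z}_2$-grading signs in the super Jacobi identity, particularly when $[\alpha_i]$ or $[\alpha_j]$ is odd; the $q$-commutator $[\![\cdot,\cdot]\!]$ carries a $(-1)^{[\alpha][\beta]}$ factor whose propagation through the inductive step must match the signs on the two sides of the claimed formula. A secondary obstacle in (2) is the combinatorial identification of the iterated recursion from (1) with the clean coefficient $[rA_{ij}]_i/r$; this reduces to the $q$-analog of Newton's identity between elementary symmetric polynomials and power sums and is handled by a direct generating-function coefficient match after taking the logarithm.
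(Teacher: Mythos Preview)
Your proposal is essentially correct and matches the argument the paper invokes: the paper gives no independent proof but cites \cite[Lemma~2.6, Lemma~2.8, Remark~2.9]{WLZ}, noting only that the coefficients there must be replaced by $[A_{ij}]_i$. One refinement worth flagging: in part (1) the Jacobi expansion you describe for the ``base case'' is in fact the whole argument for every $r$, not just $r=1$ --- one writes $\widetilde{\Phi}_{i,r}^+$ via the assumed \eqref{DE5} as a scalar times $k_i^{-1}[x_{i,0}^+,x_{i,r}^-]$, unwinds the outer bracket with $x_{j,k}^\pm$ using the super-Jacobi identity and \eqref{DE6} (or \eqref{DE7} when $A_{ij}=0$), and recognizes the result as the stated combination involving $\widetilde{\Phi}_{i,r-1}^+$ and $x_{j,k+1}^\pm$. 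No induction on $r$ is needed here, and the separate ``inductive step'' you sketch via \eqref{S13} would not be available for arbitrary $i$, since \eqref{S13} only defines $x_{j,\epsilon(r+1)}^\pm$ using the specific $i$ determined by $j$. The genuine induction is in part (2), where your generating-function argument based on the log/exp relation and the $q$-analogue of Newton's identities is exactly the mechanism used.
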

\begin{proof}
The proof follows a similar structure as presented in \cite[Lemma 2.6, Lemma 2.8, Remark 2.9]{WLZ}, with the substitution of coefficients by $[A_{ij}]_i$.
\end{proof}

\vspace{6pt}
{{\bf Discussion of Theorem \ref{the5.1}-(b)}}: introduce two $\mathbb{C}(q^{1/2})$-linear maps $\Psi_1:\mathcal{U}'\rightarrow U_q[\mathfrak{osp}(2m+1|2n)^{(1)}]$ such that
\begin{align*}
&q^{\pm\frac{1}{2}c}\mapsto K_{\delta}^{\pm\frac{1}{2}},\quad k_i^{\pm 1}\mapsto K_i^{\pm 1},\quad x_{i,0}^{\pm}\mapsto \chi_i^{\pm} \\
&x_{i,1}^-\mapsto K_{\delta}^{-1}K_iE_{\delta-\alpha_i},\quad x_{i,-1}^+\mapsto -(-1)^{[i]}F_{\delta-\alpha_i}K_{\delta}K_i^{-1}
\end{align*}
for $i=1,\ldots,N$, and $\Psi_2:U_q[\mathfrak{osp}(2m+1|2n)^{(1)}]\rightarrow \mathcal{U}'$ such that
\begin{align*}
&K_i^{\pm 1}\mapsto k_i^{\pm 1},\quad\chi_i^{\pm}\mapsto x_{i,0}^{\pm},\quad i=1,\ldots,N, \\
&K_0^{\pm 1}\mapsto (q^ck_1^2k_2^2\cdots k_{N}^2)^{\pm 1},\quad K_{\delta}^{\pm\frac{1}{2}}\mapsto q^{\pm\frac{1}{2}c}, \\
&\chi_0^+\mapsto \nu_0^+\,\Psi_2(K_0)\,[\![x_{1,1}^-,x_{2,0}^-,\ldots,x_{N,0}^-,x_{N,0}^-,\ldots,x_{1,0}^-]\!]_r, \\
&\chi_0^-\mapsto \nu_0^-\,[\![x_{1,-1}^+,x_{2,0}^+,\ldots,x_{N,0}^+,x_{N,0}^+,\ldots,x_{1,0}^+]\!]_r\, \Psi_2(K_0^{-1}) ,
\end{align*}
where $\nu_0^+=-\left([2]_N\right)^{-1}$, $\nu_0^-=(-1)^{[\alpha_1]}\left([2]_N\right)^{-1}q^{2n-2m-(\alpha_1,\alpha_2)}$.

To complete the proof of Theorem \ref{the5.1}-(b), it suffices to demonstrate that $\Psi_1$ and $\Psi_2$ are epimorphisms of superalgebras, and their composites $\Psi_1\circ\Psi_2$ and $\Psi_2\circ\Psi_1$ act as identity maps.

\subsection{ The low rank case of Theorem \ref{the5.1} for $(m,n)=(1,1)$}

{\bf Proof of Theorem \ref{the5.1}-(a):}
First, we show the map $\Theta_2$ is a homomorphism, it suffices to demonstrate the validity of relations \eqref{DE1}-\eqref{DE10} in $\mathcal{U}'$.

\vspace{6pt}
(1)  Relations \eqref{DE1}-\eqref{DE7}

Since $A_{11}=0$, our focus shifts to the indices $i=1,2$ in $\mathcal{U}'$, and we proceed as follows.

({\bf\uppercase{\romannumeral1}}) Relations \eqref{DE1} and \eqref{DE2} are straightforward.

({\bf\uppercase{\romannumeral2}}) The following relation holds:
\begin{align}\label{5B1}
&[a_{2,\epsilon },x_{j,k}^{\pm}]=\pm [A_{2j}]_2q^{\mp\frac{c}{2}}x_{j,k+\epsilon}^{\pm},\quad j=1,2, \quad \text{and} \quad k\in \mathbb{Z}.
\end{align}
By relations \eqref{S4} and \eqref{S5}, we can infer the following:
\begin{align*}
&[a_{2,-\epsilon},x_{j,0}^{\epsilon}]=\epsilon[A_{2j}]_2q^{\mp\frac{c}{2}}x_{j,-\epsilon}^{\epsilon},\quad
[a_{2,\epsilon},x_{j,-\epsilon}^{\epsilon}]=\epsilon[A_{2j}]_2q^{\mp\frac{c}{2}}x_{j,0}^{\epsilon},
\end{align*}
and with relation \eqref{S3}, we obtain
\begin{align*}
&[a_{2,\epsilon},\,a_{2,-\epsilon}]=[A_{22}]_2\frac{q^{\epsilon c }-q^{-\epsilon c}}{q_2-q_2^{-1}}.
\end{align*}
This, combined with the definition and induction on $k$, yields \eqref{5B1}.

({\bf\uppercase{\romannumeral3}}) The following relations are established:
\begin{align}\label{5B2}
&[a_{i,2\epsilon },x_{j,k}^{\pm}]=\pm \frac{[2A_{2j}]_2}{2}q^{\mp c}x_{j,k+2\epsilon}^{\pm},\quad i,j=1,2, \quad \text{and} \quad k\in \mathbb{Z},\\
\label{5B2.1}
&[a_{1,\epsilon },x_{j,k}^{\pm}]=\pm [A_{1j}]_1q^{\mp\frac{c}{2}}x_{j,k+\epsilon}^{\pm},\quad j=1,2, \quad \text{and} \quad k\in \mathbb{Z}.
\end{align}

Concerning relation \eqref{5B2}, we apply $[x_{2,0}^{\mp}, [x_{2,\pm1}^{\mp}, .]]$ to relation \eqref{S7} for $m+n=2$.
This implies in the expression:
\begin{align}\label{5B2.2}
&[\![\,x^{\pm}_{2,\pm1},\,x^{\pm}_{1,0}\,]\!]+[\![\,x^{\pm}_{2,0},\,x^{\pm}_{1,\pm1}\,]\!]=0,
\end{align}
Combining this with relation \eqref{S4}, we derive:
\begin{align}\label{5B3}
&[a_{1,\epsilon },x_{2,0}^{\pm}]=\pm[A_{12}]_1q^{\mp \frac{c}{2}}x_{2,\epsilon}^{\pm}, \quad [a_{1,\epsilon },x_{2,-\epsilon}^{\epsilon}]=\epsilon[A_{12}]_1q^{- \frac{c}{2}\epsilon}x_{2,0}^{\epsilon}.
\end{align}

Furthermore, it is easy to observe that:
\begin{align*}
&a_{1,\epsilon}=\epsilon q^{\frac{\epsilon}{2}c}k_{1}^{-\epsilon}[\,x_{1,0}^\epsilon,\,x_{1,\epsilon}^{-\epsilon}\,]
=\epsilon q^{-\frac{\epsilon}{2}c}k_{1}^{-\epsilon}[\,x_{1,\epsilon}^\epsilon,\,x_{1,0}^{-\epsilon}\,].
\end{align*}
As a consequence, we obtain that:
\begin{equation*}
 \begin{split}
  [a_{1,1},\,x_{2,0}^+]&=q^{-\frac{c}{2}}k_{1}^{-1}[\![\,x_{1,1}^+,\,x_{1,0}^-],\,x_{2,0}^+\,]\!]\\
  &=q^{-\frac{c}{2}}k_{1}^{-1}([[\![\,x_{1,1}^+,\,x_{2,0}^+]\!],\,x_{1,0}^-\,]+
  [\![\,x_{1,1}^+,[\,x_{1,0}^-,\,x_{2,0}^+\,]]\!])\\
  &=-q^{-\frac{c}{2}}k_{1}^{-1}([\![\,x_{2,1}^+,[\,x_{1,0}^+,\,x_{1,0}^-\,]]\!]+
  [\![[\,x_{2,1}^+,\,x_{1,0}^-],\,x_{1,0}^+\,]\!])\\
  &=\frac{-1}{q_1-q_1^{-1}}q^{-\frac{c}{2}}k_{1}^{-1}([\![\,x_{2,1}^+,k_{1}-k_{1}^{-1}]\!]\\
  &=q^{-\frac{c}{2}}[A_{12}]_1x_{2,1}^+,
 \end{split}
\end{equation*} for $[\,x_{2,1}^+,\,x_{1,0}^-]=0$ via \eqref{S6} together with \eqref{5B1}. It holds for the remaining cases.

So again, we apply $[x_{2,0}^{\mp}, [x_{1,\pm1}^{\mp}, .]]$ to relation \eqref{S7},
we get that
\begin{align*}
&[\![\,x^{\pm}_{2,\pm1},\,x^{\pm}_{2,0}\,]\!]=0,
\end{align*}
Combined with relation \eqref{S4}, we obtain:
\begin{align}\label{5B4}
&[a_{2,2\epsilon },x_{2,0}^{\pm}]=\pm\frac{[2A_{22}]_2}{2}q^{\mp c}x_{2,2\epsilon}^{\pm}.
\end{align}

Additionally, note that:
\begin{align}\label{5B4.1}
&a_{2,\pm2}=\frac{k_2^{\mp1}\Phi_{2,\pm2}^\pm}{q_2-q_2^{-1}}-\frac{1}{2}(q_2-q_2^{-1})a_{2,\pm1}^2,
\end{align}
where $\frac{1}{(q_2-q_2^{-1})}\Phi_{2,\pm2}^{\pm}=q^c[x_{2,0}^+,\,x_{2,\pm2}^-]=
q^{-c}[x_{2,\pm2}^+,\,x_{2,0}^-]=[\,x_{2,\pm1}^+,\,x_{2,\pm1}^-\,]$ deduced by $[a_{2,\epsilon},\,a_{2,\epsilon}]=[a_{2,\epsilon},[x_{2,0}^\pm,\,x_{2,\epsilon}^\mp]]=0$. Thus,
\begin{equation*}
 \begin{split}
  [a_{2,2},\,x_{2,0}^+]&=\frac{1}{q_2-q_2^{-1}}[k_2^{-1}\phi_{2,2}^+,\,x_{2,0}^+]-\frac{q_i-q_i^{-1}}{2}[a_{2,1}^2,\,x_{2,0}^+]\\
  &=k_2^{-1}[\![\,x_{2,1}^+,\,x_{2,1}^-],\,x_{2,0}^+\,]\!]
    -\frac{q_2-q_2^{-1}}{2}[A_{22}]_2q^{-\frac{1}{2}2}(x_{2,1}^+a_{2,1}+a_{2,1}x_{2,1}^+)\\
  &=k_1^{-1}[\![\,x_{2,1}^+,\,[x_{2,1}^-,\,x_{2,0}^+]\,]\!]
   -\frac{q_2-q_2^{-1}}{2}[A_{22}]_2q^{-\frac{1}{2}2}(x_{2,1}^+a_{2,1}+a_{2,1}x_{2,1}^+)\\
  &=q^{-\frac{1}{2}c}\left(q_2^{-A_{22}}a_{2,1}x_{2,1}^+-q_2^{A_{22}}x_{2,1}^+a_{2,1}\right)
    -\frac{q_2-q_2^{-1}}{2}[A_{22}]_2q^{-\frac{1}{2}c}(x_{2,1}^+a_{2,1}+a_{2,1}x_{2,1}^+)\\
  &=\frac{[2A_{22}]_2}{2}q^{-c}x_{2,2}^+.
 \end{split}
\end{equation*} similarly for the other cases.

Furthermore, relation \eqref{S9} implies
\begin{align*}
&[a_{2,\pm2},\,a_{2,\pm1}]=0,
\end{align*}
and using equation \eqref{5B4.1}, we can deduce
\begin{align*}
&[a_{2,\pm2},\,a_{2,\mp1}]=0.
\end{align*}
Therefore, induction on $k$ yields \eqref{5B2} for $j=2$.

Similarly, to prove relation \eqref{5B2} for $j=1$, we need to show that
\begin{align}\label{5B5}
&[a_{2,2\epsilon },x_{1,0}^{\pm}]=\pm\frac{[2A_{21}]_2}{2}q^{\mp c}x_{1,2\epsilon}^{\pm}.
\end{align}
For $A_{11}=0$, by relation \eqref{S5}, we have
\begin{align}\label{5B6}
&[a_{1,\epsilon },x_{1,0}^{\pm}]=[a_{1,\epsilon },x_{1,-\epsilon}^{\epsilon}]=0.
\end{align}
Hence, applying $[a_{1,\pm1},.]$ and $[a_{2,\pm1},.]$ to relation \eqref{S4} with $i\neq j$, we obtain
\begin{align*}
 &[\![x_{2,\pm1}^{\pm},\,x_{1,\mp1}^{\pm}]\!]
+[\![x_{2,0}^{\pm},\,x_{1,0}^{\pm}]\!]=[\![x_{1,\pm1}^{\pm},\,x_{2,\mp1}^{\pm}]\!]
+[\![x_{1,0}^{\pm},\,x_{2,0}^{\pm}]\!]=0.
\end{align*}
This implies that
\begin{align*}
&[a_{2,2\epsilon },x_{1,-\epsilon}^{\pm}]=\pm\frac{[2A_{21}]_2}{2}q^{\mp c}x_{1,\epsilon}^{\pm},
\end{align*}
so that
\begin{align*}
[a_{2,2\epsilon },x_{1,0}^{\pm}]&=\pm\frac{1}{[A_{21}]_2}q^{\mp \frac{c}{2}}[a_{2,2\epsilon },[a_{2,\epsilon}, x_{1,-\epsilon}^{\pm}]]\\
&=\pm\frac{1}{[A_{21}]_2}q^{\mp \frac{c}{2}}[a_{2,\epsilon },[a_{2,2\epsilon}, x_{1,-\epsilon}^{\pm}]]\\
&=\pm\frac{[2A_{21}]_2}{2}q^{\mp c}x_{1,2\epsilon}^{\pm},
\end{align*}
which completes \eqref{5B5}.

To verify relation \eqref{5B2.1}, using relations \eqref{5B3}, \eqref{5B6}, and \eqref{5B1}, it suffices to check
\begin{align*}
 &[\,a_{2,\pm1},\,a_{1,\mp1}\,]=[A_{21}]_2\frac{q^{\mp c}-q^{\pm c}}{q_1-q_1^{-1}},\quad [\,a_{2,\pm1},\,a_{1,\pm1}\,]=0.
\end{align*}
The first relation is straightforward by calculation. For the second relation, note that
\begin{align*}
 (q^{\frac{c}{2}}+q^{-\frac{c}{2}})[\,a_{2,1},\,a_{1,1}\,]&=q^{\frac{c}{2}}[\,a_{2,1},\,
 [x_{1,0}^+, x_{1,1}^-\,]+q^{-\frac{c}{2}}[\,a_{2,1},\,
 [x_{1,1}^+, x_{1,0}^-\,]\\
 &=[A_{21}]_2(q^{-c}[x_{1,2}^+, x_{1,0}^-\,]-q^{c}[x_{1,0}^+, x_{1,2}^-\,]).
\end{align*}
Moreover, one has,
\begin{align*}
 0=[\,a_{2,2},\,
 [x_{1,0}^+, x_{1,0}^-\,]]=\frac{[A_{21}]_2}{2}(q^{-c}[x_{1,2}^+, x_{1,0}^-\,]-q^{c}[x_{1,0}^+, x_{1,2}^-\,]).
\end{align*}
Comparing the above two expressions, we conclude that $[\,a_{2,1},\,a_{1,1}\,]=0$, similar to $$[\,a_{2,-1},\,a_{1,-1}\,]=0.$$

Now, in order to complete \eqref{5B2} for $i=1, j=1,2$, it is necessary to demonstrate that
\begin{align}
&[a_{1,2\epsilon },x_{j,0}^{\pm}]=\pm\frac{[2A_{1j}]_1}{2}q^{\mp c}x_{j,2\epsilon}^{\pm}.
\end{align}
This calculation is straightforward, similar to \eqref{5B4}, using relations \eqref{5B2.1} and \eqref{5B2.2}.

\textbf{\uppercase{\romannumeral3}}) For $i\neq j$ and $i, j=1,2$, the following relations are valid:
\begin{align}
\label{5B7}
&[\![x_{i,k+1}^{\pm},\,x_{j,l}^{\pm}]\!]
+[\![x_{j,l+1}^{\pm},\,x_{i,k}^{\pm}]\!]=0, \quad k,l\in\mathbb{Z},\\
\label{5B7.1}
&[x_{i,k+1}^{+},\,x_{j,l}^{-}]=0, \quad k,l\in\mathbb{Z}.
\end{align}

Define
\begin{gather*}
\mathfrak{X}^{\pm}(i,j; k,l)\doteq [\![x_{i,k+1}^{\pm},\,x_{j,l}^{\pm}]\!]
+[\![x_{j,l+1}^{\pm},\,x_{i,k}^{\pm}]\!],
\end{gather*}
and compute the commutator of $\mathfrak{X}^{\pm}(i,j; k,l)$ with $h_{i,1}$ and independently with $h_{j,1}$. Consequently, if $\mathfrak{X}^{\pm}(i,j; k,l)=0$, then $\mathfrak{X}^{\pm}(i,j; k+1,l)$ and $\mathfrak{X}^{\pm}(i,j; k,l+1)$ are solutions to the homogeneous system of two equations with the determinant
$$
\left|
\begin{array}{cc}
~ [a_{ii}]&[a_{ij}]\\
~[a_{ji}]&[a_{jj}]
\end{array}
\right|\neq 0.
$$

Hence, it holds that
\begin{gather*}
\mathfrak{X}^{\pm}(i,j; k,l)=0 \Rightarrow \mathfrak{X}^{\pm}(i,j; k+1,l)=0,\quad \mathfrak{X}^{\pm}(i,j; k,l+1)=0.
\end{gather*}
As $\mathfrak{X}^{\pm}(i,j; 0,0)=0$ (see \textbf{\uppercase{\romannumeral2}}), we deduce \eqref{5B7} for all $k,l$. A similar calculation holds for \eqref{5B7.1}.

\textbf{\uppercase{\romannumeral4}}) We introduce the following relations:
\begin{align}
\label{5B8}
&[a_{i,s},\,a_{j,l}]=\delta_{s,-l}\frac{[s A_{ij}]_{i}}{s}\cdot\frac{q^{s c}-q^{-sc}}{q_j-q_j^{-1}},~~i,j=1,2,~~s,l\in\mathbb{Z}, \\
\label{5B9}
&q^{\frac{s-l}{2}c}[x_{i,s}^+,\,x_{i,l}^-]=
q^{\frac{s+l}{2}c}[x_{i,s+l}^+,\,x_{i,0}^-],~~i=1,2,~~s,l\in\mathbb{Z},\\
\label{5B10}
&[\![x_{2,k+1}^{\pm},\,x_{2,l}^{\pm}]\!]
+[\![x_{2,l+1}^{\pm},\,x_{2,k}^{\pm}]\!]=0, \quad [x_{1,k}^{\pm},\,x_{1,l}^{\pm}]=0,\quad k,l\in\mathbb{Z},\\
\label{5B10.1}
&[a_{i,\pm s},\,x_{j,k}^{\pm}]=\pm\frac{[sA_{ij}]_i}{s}q^{\mp\frac{sc}{2}}x_{j,k+s}^{\pm},\quad i,j=1,2,~~s\in \mathbb{Z}_{>0},~~k\in\mathbb{Z}.
\end{align}

First, let us compute
\begin{gather*}
[\,a_{2,\pm1},[\,a_{2,\pm1},\,[\![\,x_{i,\pm k\pm1}^{\pm},\,x_{i,\pm k}^{\pm}\,]\!]] \quad\textrm{and} \quad [\,a_{2,\pm2},\,[\![\,x_{i,\pm k\pm1}^{\pm},\,x_{i,\pm k}^{\pm}\,]\!]],
\end{gather*}
for $i=1,2$.

Consequently, $[\![\,x_{i,\pm k\pm2}^{\pm},\,x_{i,\pm k\pm1}^{\pm}\,]\!]=0$ for $[\![\,x_{i,\pm1}^{\pm},\,x_{i,0}^{\pm}\,]\!]=0$. This implies that \eqref{5B10} holds for all $s=l$ when $i=2$ and for all $s=l-1$ when $i=1$.

Next, we establish the validity of relation \eqref{5B8} for $i=j=2$ and relation \eqref{5B9} for $i=1,2$ through induction on $s+l=p>0$. The base cases for $p=1,2$ are evident from our previous results. Assuming these relations hold for $p=r$, we derive
\begin{align*}
\cdots&=q^{\frac{-rc}{2}}[x_{i,r}^+,\,x_{i,0}^-]=q^{\frac{(2-r)c}{2}}[x_{i,r-1}^+,\,x_{i,1}^-]=\cdots\\
&=q^{\frac{(r-2)c}{2}}[x_{i,1}^+,\,x_{i,r-1}^-]=q^{\frac{rc}{2}}[x_{i,0}^+,\,x_{i,r}^-]=\cdots.\nonumber
\end{align*}

Applying $a_{2,1}$ to the aforementioned series of equations, we derive
\begin{align}
\label{5B11}
\cdots&=q^{\frac{-(r+1)c}{2}}\big([x_{i,r+1}^+,\,x_{i,0}^-]-q^c[x_{i,r}^+,\,x_{i,1}^-]\big)\\\nonumber
&=q^{\frac{-(r-1)c}{2}}\big([x_{i,r}^+,\,x_{i,1}^-]-q^c[x_{i,r-1}^+,\,x_{i,2}^-]\big)\\\nonumber
&=\cdots\\\nonumber
&=q^{\frac{(r-1)c}{2}}\left([x_{i,1}^+,\,x_{i,r}^-]-q^c[x_{i,0}^+,\,x_{i,r+1}^-]\right)=\cdots.\nonumber
\end{align}
Considering relation \eqref{5B9} for $i=2$, based on the induction hypothesis, we conclude that
\begin{gather*}
-k_2^{-1}q^{\frac{c}{2}}[\widetilde{\Phi}_{2, r}^+,\,[x_{2,0}^+,\,x_{2,1}^-]]=[a_{2,1},\,a_{2,r}]=k_2^{-1}q^{\frac{rc}{2}}[a_{2,1},\,[x_{2,0}^+,\,x_{2,r}^-]]
\end{gather*}

Applying Lemma \ref{LM5.1}-(1) to the left-hand side iteratively, we obtain
\begin{align*}
-[rA_{22}]_2\left([x_{2,1}^+,\,x_{2,r}^-]-q^c[x_{2,0}^+,\,x_{2,r+1}^-]\right)
=[A_{22}]_2\left([x_{2,1}^+,\,x_{2,r}^-]-q^c[x_{2,0}^+,\,x_{2,r+1}^-]\right),
\end{align*}
which implies that
$[x_{2,1}^+,\,x_{2,r}^-]-q^c[x_{2,0}^+,\,x_{2,r+1}^-]=0$.
Hence, based on equation \eqref{5B11}, we can conclude that relation \eqref{5B9} holds for $p=r+1$ when $i=2$.

Furthermore, for any positive integer $t<r$, we observe that
\begin{align*}
[a_{2,s-t},\,a_{2,t+1}]&=[\widetilde{\Phi}_{2,s-t}^+,\,a_{2,t+1}]=q^{\frac{(t+1)c}{2}}[\widetilde{\Phi}_{2,s-t}^+,\,[x_{2,0}^+,\,x_{2,t+1}^-]]\\
&=\frac{[A_{22}(s-t)]_2}{[A_{22}]_2}q^{\frac{rc}{2}}[\widetilde{\Phi}_{2,1}^+,\,[x_{2,0}^+,\,x_{2,s}^-]]\\
&=[A_{22}(s-t)]_2q^{\frac{(s-1)c}{2}}
\left([x_{2,1}^+,\,x_{2,r}^-]-q^c[x_{2,0}^+,\,x_{2,s+1}^-]\right)=0.
\end{align*}
This implies that relation \eqref{5B8} holds for $p=r+1$ when $i=2$ and $s,l>0$. A corresponding result is observed for the case $s+l=p<0$ and $s,l<0$.

Applying Lemma \ref{LM5.1}-(2) and using \eqref{5B7}, we obtain
\begin{align}
\label{5B12}
&[a_{2,\pm s},\,x_{j,k}^{\pm}]=\pm\frac{[sA_{2j}]_2}{s}q^{\mp\frac{sc}{2}}x_{2,k+s}^{\pm},\quad j=1,2, s\in \mathbb{Z}_{>0},~k\in\mathbb{Z}.
\end{align}

Moreover, by applying $[a_{2, k},[a_{2, l}, .]]$ to $[\![\,x_{i,\pm1}^{\pm},\,x_{i,0}^{\pm}\,]\!]=0$, we can establish relation \eqref{5B10} for any $k,l\in\mathbb{Z}$ with $i=1,2$.

Now, let us proceed to complete relation \eqref{5B9} for $i=1$. By \eqref{5B2}, it is easy to observe that
\begin{align*}
  [a_{2,r-1},\,[x_{1,0}^+,\,x_{1,2}^-]]=[a_{2, r-1},a_{1, 2}]=0=[a_{2, r},a_{1, 1}]=[a_{2,r},\,[x_{1,0}^+,\,x_{1,1}^-]],
\end{align*}
hence $[x_{1,r}^+,\,x_{1,1}^-]-q^c[x_{1,r-1}^+,\,x_{1,2}^-]=0$, validating the claim for $p=r+1$. It holds similarly for the case $s+l=p<0$.

To establish the rest of relation \eqref{5B8}:
when $i=j=2$, we deduce it for all $s+l=0$ or $s-l\neq0$ ($s,-l\in \mathbb{Z}_{>0}$) from \eqref{5B9};
when $i=j=1$, we derive it for all $s,l\in \mathbb{Z}$ from \eqref{5B9} and Lemma \ref{LM5.1}-(1);
when $i\neq j$, it follows from \eqref{5B9} and \eqref{5B12}.
Subsequently, applying Lemma \ref{LM5.1}-(2) again completes relation \eqref{5B10.1} for $i=1$.

Now, the verification of relations in steps ({\bf\uppercase{\romannumeral1}}) and ({\bf\uppercase{\romannumeral3}})-({\bf\uppercase{\romannumeral4}}) implies the establishment of relations \eqref{DE1}-\eqref{DE7}.

\vspace{6pt}
(2) The Serre relations \eqref{DE8}-\eqref{DE9}
\vspace{6pt}

The Serre relations \eqref{DE8} resemble those discussed in \cite[Section 4]{LYZ} for type A. Here, our focus is primarily on the Serre relation \eqref{DE9} within $\mathcal{U}'$, specifically involving the long root generators, as depicted in \eqref{new-DE9}:

\begin{gather}\label{new-DE9}
\operatorname{Sym}_{k_1,k_2,k_3}[\![x_{2,k_1}^{\pm},\,[\![x_{2,k_2}^{\pm},[\![x_{2,k_3}^{\pm},\,X_{1,l}^{\pm}]\!] ]\!] ]\!]=0.
\end{gather}

Let us denote the left side of relation \eqref{new-DE9} as $\mathcal{Y}^{\pm}(k_1,k_2,k_3;l)$. As per \eqref{S7}, we have $\mathcal{Y}^{\pm}(0,0,0;0)=0$. By applying $[a_{2,r},\,\cdot\,]$ and $[a_{1,r},\,\cdot\,]$ to $\mathcal{Y}^{\pm}(0,0,0;s)=0$, we get

\begin{equation*}
\begin{cases}
\frac{3}{r}[r A_{22}]_{2}\mathcal{Y}^{\pm}(r,0,0;0)+\frac{1}{r}[r A_{21}]_{2}\mathcal{Y}^{\pm}(0,0,0;r)=0, \\
\frac{3}{r}[r A_{12}]_{1}\mathcal{Y}^{\pm}(r,0,0;0)+\frac{1}{r}[r A_{11}]_{1}\mathcal{Y}^{\pm}(0,0,0;r)=0.
\end{cases}
\end{equation*}
Since $(\alpha_{1},\alpha_{1})=0$, we immediately have $\mathcal{Y}^{\pm}(r,0,0;0)=\mathcal{Y}^{\pm}(0,0,0;r)=0$ for all $r\in\mathbb{Z}$. Using the action $[a_{2,r},\,\cdot\,]$ on $\mathcal{Y}^{\pm}(0,0,0;s)=0$ three times yields
\begin{gather*}
\mathcal{Y}^{\pm}(k_1,k_2,k_3;l)=\mathcal{Y}^{\pm}(k_1,k_2,0;l)=\mathcal{Y}^{\pm}(k_1,0,0;l)=0
\end{gather*}
for all $k_1,k_2,k_3,l\in\mathbb{Z}$.

\vspace{6pt}
Therefore, we have proved that the map $\Theta_2$ is an epimorphism. Now, it is easy to see that $\Theta_1\circ\Theta_2=1_{\mathcal{U}'}$. On the other hand, by the definition \eqref{S10}-\eqref{S14}, we find that $\Theta_2\circ\Theta_1=1_{\mathcal{U}_q[\mathfrak{osp}(3|2)^{(1)}]}$. Hence, we complete the proof of (a).

\vspace{12pt}
{\bf Proof of Theorem \ref{the5.1}-(b):}
As the maps $\Psi_1$ and $\Psi_2$ defined in this Section, we outline the proof in the following steps:

({\bf\uppercase{\romannumeral1}}) $\Psi_1$ is a homomorphism.

By the defining relations \eqref{S10}-\eqref{S14}
in $\mathcal{U}'$, \eqref{S9} is equal to
 \begin{gather*}
[\Phi_{i,\pm1}^{\pm}, \Phi_{i,\pm2}^{\pm}]=0,\quad i=1,2.
 \end{gather*}
{Note that
 \begin{gather*}
 \Psi_1(\Phi_{i,\pm r}^{\pm})=\pm(q_i-q_i^{-1})\Psi_1(q^{\frac{r}{2}})[\Psi_1(x_{i,0}^{\pm}),\,\Psi_1(x_{i,\pm r}^{\mp})], \quad \text{for}~~ i,r\in\{1,2\}.
 \end{gather*}By
the defining relations of (E4)-(E5) and \eqref{S13}, we have
$\widetilde{E}_{r\delta^{(i)}}=K_{\delta}^{\frac{r}{2}}K_i^{-1}(q_i-q_i^{-1})^{-1}\Psi_1(\Phi_{i,r}^+)$ and $\widetilde{F}_{r\delta^{(i)}}=K_{\delta}^{\frac{-r}{2}}K_i^{-1}(q_i^{-1}-q_i)^{-1}\Psi_1(\Phi_{i,-r}^-)$, where $i,r\in\{1,2\}$.  From Lemma \ref{LM:T6}, we find that
\begin{align}\label{useful:911}
   &[\Psi_1(\Phi_{i,1}^{+}), \Psi_1(\Phi_{i,2}^{+})]=[\widetilde{E}_{\delta^{(i)}},\,\widetilde{E}_{2\delta^{(i)}}]=[E_{\delta^{(i)}},\,E_{2\delta^{(i)}}]=0.
\end{align}}By Proposition \ref{anti-auto}, we also have
\begin{align}\label{useful:911-0}
   & [\Psi_1(\Phi_{i,1}^{-}), \Psi_1(\Phi_{i,2}^{-})]=[\widetilde{F}_{\delta^{(i)}},\,\widetilde{F}_{2\delta^{(i)}}]=[F_{\delta^{(i)}},\,F_{2\delta^{(i)}}]=0.
\end{align}

We now prove that the new relation \eqref{S9} preserves the homomorphism relation. Consequently, by Lemma \ref{LM:T1}-\ref{LM:T3},  Lemma \ref{LM:T5} and Proposition \ref{anti-auto}, the mapping $\Psi_1$
is necessarily a homomorphism.

\vspace{6pt}
({\bf\uppercase{\romannumeral2}}) $\Psi_2$ is a homomorphism.

First, we examine relations \eqref{DJ1}--\eqref{DJ10} involving the elements $K_0$ and $\chi_0^{\pm}$. Relations \eqref{DJ1}--\eqref{DJ2} are evident by definition. Our focus then shifts to confirming relations \eqref{DJ3}--\eqref{DJ10}.
We will address the cases of $\chi_0^+$ and $\chi_0^-$ separately, starting with $\chi_0^+$. The case of $\chi_0^-$ follows a similar argument. To begin, we establish relation \eqref{DJ3} by considering the illustrative example of $(m,n)=(1,1)$ since the general case of $m, n\in\mathbb{Z}$ follows a similar approach. For $i=0$, one obtains
\begin{align*}
[\Psi_2(\chi_0^+),\,\Psi_2(\chi_0^-)]
&=\nu_0^+\nu_0^-[\,[\![x_{1,1}^-,x_{2,0}^-,x_{2,0}^-,x_{1,0}^-]\!]_r,\,[\![x_{1,-1}^+,x_{2,0}^+,x_{2,0}^+,x_{1,0}^+]\!]_r] \\
&=-\nu_0^+\nu_0^-(-1)^{[\alpha_1]+[\alpha_2]}q^{d_2+2d_1}[2]_2^2[2]_1
\frac{q^{-c}k_1^2k_2^2-q^ck_1^{-2}k_2^{-2}}{q_1-q_1^{-1}} \\
&=[2]_1\frac{\Psi_2(K_0)-\Psi_2(K_0^{-1})}{q_1-q_1^{-1}}.
\end{align*}
For $i=1$, we have
\begin{align*}
[\Psi_2(\chi_0^+),\,\Psi_2(\chi_1^-)]
&=[\nu_0^+\,\Psi_2(K_0)\,[\![x_{1,1}^-,x_{2,0}^-,x_{2,0}^-,x_{1,0}^-]\!]_r,\,x_{1,0}^-] \\
&=\nu_0^+\,\Psi_2(K_0)\,[\,[[\![x_{1,1}^-,x_{2,0}^-]\!],\,[x_{2,0}^-,x_{1,0}^-]_{q}]_{q},\,x_{1,0}^- ]_{q^2} \\
&=\nu_0^+\,\Psi_2(K_0)\,[\,[\![x_{1,1}^-,x_{2,0}^-]\!],\,[[x_{2,0}^-,x_{1,0}^-]_{q},\,x_{1,0}^-]_{q},\,x_{1,0}^- ]_{q^2} \\
&+(-1)^{[\alpha_1]}q\,[\,[[\![x_{1,1}^-,x_{2,0}^-]\!],\,x_{1,0}^-]_{q},\,  [x_{2,0}^-,\,x_{1,0}^-]_{q}\,]=0.
\end{align*}
As for $i=2$, applying $[\,\cdot\,,\,x_{1,0}^-]_{q}$ to $[\Psi_2(\chi_0^+),\,\Psi_2(\chi_2^-)]$, we obtain
\begin{align*}
[[\Psi_2(\chi_0^+)&,\,\Psi_2(\chi_2^-)]\,x_{1,0}^-]_{q}
=\nu_0^+\,\Psi_2(K_0)\,[\![x_{1,1}^-,x_{2,0}^-,x_{2,0}^-,x_{1,0}^-,x_{2,0}^-,x_{1,0}^-]\!]_{r} \\
&=\nu_0^+\,\Psi_2(K_0)\,[[\![x_{1,1}^-,x_{2,0}^-,x_{2,0}^-,x_{1,0}^-]\!]_r,\,[x_{2,0}^-,\,x_{1,0}^-]_q]_q
+[[\Psi_2(\chi_0^+),\,\Psi_2(\chi_1^-)],\,x_{2,0}^-]_{q^{-1}} \\
&=\nu_0^+\,\Psi_1(K_0)\,\Big[\Big[[\![x_{1,1}^-,x_{2,0}^-]\!],[x_{2,0}^-,x_{1,0}^-]_q\Big]_q,\,[x_{2,0}^-,\,x_{1,0}^-]_q\Big]_q=0.
\end{align*}
Then $[\Psi_2(\chi_0^+),\,\Psi_2(\chi_2^-)]=0$ can be obtained by applying $[x_{1,0}^+,\,\cdot\,]_{q^2}$ to both sides of above equality.

Next, we check relation \eqref{DJ4}. One gets for $i=2$
\begin{align*}
[\Psi_2(\chi_0^+),\,\Psi_2(\chi_2^+)]
&=\nu_0^+\,\Psi_2(K_0)\,[[\![x_{1,1}^-,x_{2,0}^-,x_{2,0}^-,x_{1,0}^-]\!]_r,\,x_{2,0}^+ ]=0.
\end{align*}

Now, concurrently, we proceed to verify the Serre relation involving the element $\chi_0^+$.
Relation \eqref{DJ5} ensues from $[\Psi_2(\chi_0^+),\,x_{1,1}^-]=0$, which can be confirmed as $[\Psi_2(\chi_0^+),\,\Psi_2(\chi_1^-)]=0$.
As for relation \eqref{DJ7}, we obtain
\begin{align*}
&[\![\Psi_2(\chi_1^+),\Psi_2(\chi_1^+),\Psi_2(\chi_1^+),\Psi_2(\chi_0^+)]\!]_{\ell} \\
=&[\![x_{1,0}^+,x_{1,0}^+,x_{1,0}^+,\nu_0^+\,\Psi_2(K_0)[\![x_{1,1}^-,x_{2,0}^-,x_{2,0}^-,x_{1,0}^-]\!]_r]\!]_{\ell} \\
=&p\,\Psi_2(K_0)k_1^2[\![x_{2,1}^-,x_{2,0}^-,x_{2,0}^-]\!]_r=0,
\end{align*}
where $p\in\mathbb{C}(q)$.

Let us examine the left-hand side of relation \eqref{DJ10}:
\begin{align*}
\textbf{LHS}&=[\![\, [\![x_{2,0}^+,x_{1,0}^+]\!] ,\, [\![x_{2,0}^+,x_{1,0}^+]\!],\,[\![x_{2,0}^+,x_{1,0}^+]\!],\,\nu_0^+\Psi_2(K_0)[\![x_{1,1}^-,x_{2,0}^-,x_{2,0}^-,x_{1,0}^-]\!]_r \,]\!]_{\ell} \\
&=\nu_0^+[\![\, [\![x_{2,0}^+,x_{1,0}^+]\!],\,[\![x_{2,0}^+,x_{1,0}^+]\!],\,-q^ck_1^{-1}k_2^{-1}q^{-1}[2]_1[2]_2\,[\![x_{1,1}^-,\,x_{2,0}^-]\!]  \,]\!]_{\ell} \\
&=\nu_0^+[\![\, [\![x_{2,0}^+,x_{1,0}^+]\!],\,-[2]_1[2]_2q^{\frac{1}{2}c}
(q^{-1}a_{2,1}+[2]_2a_{1,1}+k_1^{-1}[2]_2(q^{-2}-1)q^{\frac{1}{2}c}x_{1,1}^-x_{1,0}^+)\,]\!]_{\ell} \\
&=-\nu_0^+q^{-1}[2]_1[2]_2^2
([x_{2,0}^+,\,x_{1,1}^+]_q-[x_{2,1}^+,\,x_{1,0}^+]_q)-[2]_1[2]_2^2([x_{2,1}^+,\,x_{1,0}^+]_q+(q^{-2}-1)x_{2,1}^+x_{1,0}^+) \\
&=\nu_0^+q^{-1}[2]_1[2]_2^2(1-q-q^{-1})[x_{2,1}^+,\,x_{1,0}^+]_q,
\end{align*}
Its right hand side is as follows:
\begin{align*}
\textbf{RHS}&=(1-[2]_1)[\![\,[\![x_{2,0}^+,x_{1,0}^+]\!] ,\, [\![x_{2,0}^+,\,x_{2,0}^+,\,x_{1,0}^+,\,\nu_0^+\Psi_2(K_0)[\![x_{1,1}^-,x_{2,0}^-,x_{2,0}^-,x_{1,0}^-]\!]_r\,]\!]_{\ell},\,x_{1,0}^+\,]\!]_r \\
&=\nu_0^+(1-[2]_1)[\![\,[\![x_{2,0}^+,x_{1,0}^+]\!] ,\, [\![x_{2,0}^+,\,x_{2,0}^+,\,q^ck_1^{-1}k_2^{-2}q^{-2}[2]_1[\![x_{1,1}^-,x_{2,0}^-,x_{2,0}^-]\!]_r    \,]\!]_{\ell},\,x_{1,0}^+\,]\!]_r \\
&=\nu_0^+(1-[2]_1)[\![\,[\![x_{2,0}^+,x_{1,0}^+]\!] ,\, [\![x_{2,0}^+,\,-q^ck_1^{-1}k_2^{-1}q^{-1}[2]_1[2]_2[\![x_{1,1}^-,x_{2,0}^-]\!]    \,]\!],\,x_{1,0}^+\,]\!]_r \\
&=\nu_0^+(1-[2]_1)[\![\,[\![x_{2,0}^+,x_{1,0}^+]\!] ,\, q^ck_1^{-1}q^{-1}[2]_1[2]_2^2x_{1,1}^-,\,x_{1,0}^+\,]\!]_r \\
&=\nu_0^+(1-[2]_1)[\![\,q^{-1}[2]_1[2]_2^2x_{2,1}^+,\,x_{1,0}^+\,]\!],
\end{align*} which implies that relation \eqref{DJ10} with $(m,n)=(1,1)$ holds.

Finally, it is noteworthy that relations \eqref{DJ1}-\eqref{DJ10}, excluding the elements $K_0$ and $\chi_0^{\pm}$, manifest a consistent and straightforward pattern.

\vspace{6pt}
({\bf\uppercase{\romannumeral2}}) $\Psi_1\circ\Psi_2=\operatorname{id}_{U_q[\mathfrak{osp}(3|2)^{(1)}]}$ and $\Psi_2\circ\Psi_1=\operatorname{id}_{\mathcal{U}'}$.

{The homomorphisms} $\Psi_1$ and $\Psi_2$ are evidently surjective by construction. To establish this, we will scrutinize their action on the generators. It is worth noting that the elements $\chi_i^{\pm}$, $K_{\alpha}$ (or alternatively, $\chi_i^{\pm}$, $K_i^{\pm 1}$, $K_{\delta}^{\pm\frac{1}{2}}$) for $i=1,2$ and $\alpha\in\widehat{Q}$ remain fixed under the composition $\Psi_1\circ\Psi_2$ (or alternatively, $\Psi_2\circ\Psi_1$).

Next, we assert that $\Psi_1\circ\Psi_2(\chi_0^+)=\chi_0^+$. Indeed, from the Example \ref{Examp},
\begin{align*}
\Psi_1\circ\Psi_2(\chi_0^+)&=\Psi_1\left(\nu_0^+\,\Psi_2(K_0)\,
[\![x_{1,1}^-,x_{2,0}^-,x_{2,0}^-,x_{1,0}^-]\!]_r\right) \\
&=-[2]_2^{-1}K_0[\![K_1K_{\delta-\alpha_1}^{-1}E_{\delta-\alpha_1},\chi_{2}^-,\chi_{2}^-,\chi_1^-]\!]_r\\
&=[2]_2^{-1}[2]_1^{-1}K_0[\![K_1K_{\delta-\alpha_1}^{-1}[\![\chi_2^+,\chi_2^+,\chi_{1}^+,\chi_0^+]\!]_{\ell},
\chi_{2}^-,\chi_{2}^-,\chi_1^-]\!]_r.
\end{align*}A simple calculation shows that $\Psi_1\circ\Psi_2(\chi_0^+)=\chi_0^+$. Similarly, one can verify that $\Psi_1\circ\Psi_2(\chi_0^-)=\chi_0^-$ and $\Psi_2\circ\Psi_1(x_{i,\pm 1}^{\mp})=x_{i,\pm 1}^{\mp}$. Thus, the proof is complete.

{
\subsection{The case for $N>2$ of Theorem \ref{the5.1}}}

{\bf Proof of Theorem \ref{the5.1}:}
(a){
If $n=1$ and $m>1$, then we can complete the proof by iteratively applying the steps as well as the low rank case for the generators with index $i<m+1=N$ from the definition \eqref{S10}-\eqref{S14}, and the generator relations with indices $m$ and $m+1$ are analogous to those in the established low rank case for indices 1 and 2.
Obviously, the relations for the generators with indices $i<m+1$
are relatively easier than those for $i=m+1$.}

If $n>1$ and $m>1$, since $a_{11}\neq0$, we initiate relations \eqref{DE1}-\eqref{DE7} specifically for the index $i=j=1$ in $\mathcal{U}'$. Subsequently, to derive relations \eqref{DE1}-\eqref{DE7} for $i,j=2,3,...$, we also employ a step-by-step repetition of the methodology used in the low rank case. This approach closely follows the methodology outlined in \cite[Section 4]{LYZ} for type A.

 {Finally, we can conclude that the compositions $\Theta_1\circ\Theta_2$ and $\Theta_2\circ\Theta_1$ both equal the identity map on their respective domains.}

\vspace{6pt}
(b) The proof method for the case of $N>2$ is an analogous extension of the low-rank case. It is worth noting that to prove that $\Psi_2$ is a homomorphism,
an additional Serre relation \eqref{DJ9} must also be considered. Without loss of generality, let $(m,n)=(2,1)$, then
\begin{align*}
&[\![\Psi_2(\chi_3^+),\Psi_2(\chi_2^+),\Psi_2(\chi_1^+),\Psi_2(\chi_0^+),\Psi_2(\chi_1^+),
\Psi_2(\chi_2^+),\Psi_2(\chi_1^+)]\!]_r \\
=&[\![\, [\![x_{3,0}^+,x_{2,0}^+,x_{1,0}^+]\!]_r,\, \nu_0^+\Psi_2(K_0)[\![x_{1,1}^-,\ldots,x_{3,0}^-,x_{3,0}^-,\ldots,x_{1,0}^-]\!]_r,\,x_{1,0}^+,\,x_{2,0}^+,\,x_{1,0}^+ \,]\!]_r \\
=&[\![\,q^{-2}\nu_0^+\Psi_2(K_0)[\,[\![x_{3,0}^+,\,x_{2,0}^+]\!],\,[2]_1k_1 [\![x_{1,1}^-,x_{2,0}^-,x_{3,0}^-,x_{3,0}^-,x_{2,0}^-]\!]_r]_q,\,x_{1,0}^+,\,x_{2,0}^+,\,x_{1,0}^+ \,]\!]_r \\
=&[\![\,\nu_0^+q^ck_1^{-1}k_2^{-1}k_3^{-1}[2]_1[2]_3\,[\![x_{1,1}^-,\,x_{2,0}^-,x_{3,0}^-]\!]_r,\,  x_{1,0}^+,\,x_{2,0}^+,\,x_{1,0}^+ \,]\!]_r \\
=&[\![\,q^2\nu_0^+q^ck_2^{-1}k_3^{-1}[2]_1[2]_3\,[\![x_{2,1}^-,\,x_{3,0}^-]\!],\,x_{2,0}^+,\,x_{1,0}^+ \,]\!]_r \\
=&[\![\,q^2\nu_0^+q^ck_3^{-1}[2]_1[2]_3\,x_{3,1}^-  ,\,x_{1,0}^+ \,]\!]=0.
\end{align*}

On the other hand, {to prove that the homomorphism $\Psi_1\circ\Psi_2$
is an identity map for the general case, we still consider the element $\chi_0^+$,} that is
\begin{align*}
\Psi_1\circ\Psi_2&(\chi_0^+)=\Psi_1\left(\nu_0^+\,\Psi_2(K_0)\,
[\![x_{1,1}^-,x_{2,0}^-,\ldots,x_{N,0}^-,x_{N,0}^-,\ldots,x_{1,0}^-]\!]_r\right) \\
&=\nu_0^+\mu_1^+K_0[\![K_1K_{\delta-\alpha_1}^{-1}[\![\chi_2^+,\ldots,\chi_{N}^+,\chi_{N}^+,\ldots,\chi_0^+]\!]_{\ell},
\chi_2^+,\ldots,\chi_{N}^-,\chi_{N}^-,\ldots,\chi_1^-]\!]_r \\
&=[2]_1[2]_{N}^2\,q^{2(d_1+\cdots+d_{N})-d_2}\prod_{i=2}^{N-1}
\left(\frac{q^{d_i}-q^{-d_i}}{q_i-q_i^{-1}}\frac{q^{d_{i+1}}-q^{-d_{i+1}}}{q_i-q_i^{-1}}\right)\chi_0^+=\chi_0^+,
\end{align*}
where $\mu_1^+=-q^{(\alpha_1,\alpha_1)+2n+1-2m}[2]_N^{-1}[2]_1^{-1}$.

\vspace{12pt}
Now, we can give the proof of our main Theorem \ref{Main1}.

{\bf Proof of Theorem \ref{Main1}:} The proof follows directly from Theorem \ref{the5.1}, and it is evident that $\Psi=\Psi_1\circ\Theta_2$.

\begin{remark}
{It is worth noting that Xu-Zhang (\cite{YXRZ2})} established an isomorphism between the Drinfeld presentation and the Drinfeld-Jimbo presentation of the quantum affine superalgebra associated with the remaning case $\mathfrak{osp}(1|2n)^{(1)}$.
\end{remark}

\begin{appendix}
    \section{Appendix: Explicit braid action with Dynkin Diagrams}
    As known, the nodes, links, and arrows of Dynkin diagram for the orthosymplectic Lie superalgebra at arbitrary parities determine the actions of $r_i$. Thus, the action of $T_i$ on Chevalley generators depends on its corresponding Dynkin diagram. Since $\omega_i$ preserves the standard Dynkin diagram, we don't need to use all braid group action on quantum affine superalgebra $U_q[\mathfrak{osp}(2m+1|2n)^{(1)}]$ for arbitrary diagrams. We list only some of Bezerra-Futorny-Iryna's results we need. More details please see \cite[Section 4]{Luan}. Throughout this appendix, we use
    \begin{tikzpicture}
            \draw (0.16,0.16)--(-0.16,-0.16)  (0.16,-0.16)--(-0.16,0.16);
          \end{tikzpicture}
    to represent the white or gray node and use
    \begin{tikzpicture}
            \draw (0,0) circle (0.22);
            \draw[fill=black] (0,0) circle (0.12);
          \end{tikzpicture}
    to represent the white or black node. Set $d_i=(\varepsilon_i,\varepsilon_i)$ for $\{\varepsilon_i|i=1,\ldots,N\}$ of $\mathfrak{osp}(2m+1|2n)$.
    \begin{enumerate}
        \item\textbf{Case} 1. If the Dynkin diagram of $\mathfrak{osp}(2m+1|2n)^{(1)}$ has subdiagram as
       \begin{center}
          \begin{tikzpicture}
            \draw (0.22,0) circle (0.22);
            \node[above] at (0.22,0.4) {0};
            \draw[double, double distance=3pt] (0.44,0) -- (1.1,0);
            \draw[-{To[scale=2, line width=0.4pt]}, line width=0.4pt] (1.2,0) -- ++(0.01,0);
            \draw (1.46,0) circle (0.22);
            \node[above] at (1.46,0.4) {1};
            \draw (1.68,0)--(2.48,0);
            \draw (2.54,0.16)--(2.86,-0.16)  (2.54,-0.16)--(2.86,0.16);
            \node[above] at (2.70,0.4) {2};
            \draw[dash pattern=on 3pt off 2pt] (2.92,0)--(3.52,0);
          \end{tikzpicture}
        \end{center}
        then $T_0$ is an automorphism of superalgebra $U_q[\mathfrak{osp}(2m+1|2n)^{(1)}]$ given by
        \begin{equation}\label{Braid:1}
        \begin{split}
            &T_0(\chi_0^+)=-d_1(q+q^{-1})\chi_0^-K_0,\quad T_0(\chi_0^-)=-\frac{d_1}{q+q^{-1}}K_0^{-1}\chi_0,\quad T_0(K_0)=K_0^{-1}, \\
            &T_0(\chi_1^+)=\frac{d_1q^{-2d_1}}{q+q^{-1}}[\![\chi_1^+,\,\chi_0^+]\!],\quad T_0(\chi_1^-)=-[\![\chi_1^-,\,\chi_0^-]\!],\quad T_0(K_1)=K_0K_1;
        \end{split}
        \end{equation}
        while $T_1: U_q[\mathfrak{osp}(2m+1|2n)^{(1)}]\rightarrow U_q[^{r_1}\mathfrak{osp}(2m+1|2n)^{(1)}]$ is defined by
        \begin{equation}\label{Braid:2}
            \begin{split}
                &T_1(\chi_0^+)=\frac{q^{-2d_2}}{q+q^{-1}}[\![ [\![\chi_0^+,\,\chi_1^+]\!] ,\,\chi_1^+]\!],\quad T_1(\chi_0^-)=[\![ [\![\chi_0^-,\,\chi_1^-]\!],\,\chi_1^-]\!],\\
            &T_1(\chi_1^+)=-d_1\chi_1^-K_1,\quad T_1(\chi_1^-)=-d_{2}K_1^{-1}\chi_1^+, \\
                &T_1(\chi_2^+)=(-1)^{[\alpha_1][r_1(\alpha_2)]}d_1q^{-d_1}[\![\chi_2^+,\,\chi_1^+]\!],\quad T_1(\chi_2^-)=-[\![\chi_2^-,\,\chi_1^-]\!], \\
                & T_1(K_0)=K_0K_1^2,\quad T_1(K_1)=K_1^{-1},\quad T_1(K_2)=K_1K_2.
            \end{split}
        \end{equation}

        \item\textbf{Case} 2. If the Dynkin diagram of $\mathfrak{osp}(2m+1|2n)^{(1)}$ has subdiagram as
        \begin{center}
          \begin{tikzpicture}
            \draw (0.22,0) circle (0.22);
            \node[above] at (0.22,0.4) {0};
            \draw[double, double distance=3pt] (0.44,0) -- (1.1,0);
            \draw[-{To[scale=2, line width=0.4pt]}, line width=0.4pt] (1.2,0) -- ++(0.01,0);
            \draw (1.46,0) circle (0.22) (1.30,0.16)--(1.62,-0.16)  (1.30,-0.16)--(1.62,0.16);
            \node[above] at (1.46,0.4) {1};
            \draw (1.68,0)--(2.48,0);
            \draw (2.54,0.16)--(2.86,-0.16)  (2.54,-0.16)--(2.86,0.16);
            \node[above] at (2.70,0.4) {2};
            \draw[dash pattern=on 3pt off 2pt] (2.92,0)--(3.52,0);
          \end{tikzpicture}
        \end{center}
        then $T_0$ is an automorphism of superalgebra $U_q[\mathfrak{osp}(2m+1|2n)^{(1)}]$ defined as \eqref{Braid:1};
        while $T_1: U_q[\mathfrak{osp}(2m+1|2n)^{(1)}]\rightarrow U_q[^{r_1}\mathfrak{osp}(2m+1|2n)^{(1)}]$ acts on node 0 given by
        \begin{align*}
            T_1(\chi_0^+)=\frac{d_2q^{-2d_2}}{q+q^{-1}}[\![\chi_0^+,\,\chi_1^+]\!],\quad T_1(\chi_0^-)=[\![\chi_0^-,\,\chi_1^-]\!],\quad T_1(K_0)=K_0K_1,
        \end{align*}
        and acts on nodes 1 and 2 the same as \eqref{Braid:2}.

        \item\textbf{Case} 3. If the Dynkin diagram of $\mathfrak{osp}(2m+1|2n)^{(1)}$ has subdiagram as
        \begin{center}
             \begin{tikzpicture}
             \draw (1.84,0.6) circle (0.22) (1.68,0.44) -- (2.00,0.76) (1.68,0.76) -- (2.00,0.44);
      \node at (1.32,0.6) {$0$};
      \draw (1.84,-0.6) circle (0.22) (1.68,-0.44) -- (2.00,-0.76) (1.68,-0.76) -- (2.00,-0.44);
      \node at (1.32,-0.6) {$1$};
      \draw[dash pattern=on 3pt off 2pt]  (1.94,0.38)--(1.94,-0.38) (1.74,0.38)--(1.74,-0.38); 
    \draw (3.02,0) circle (0.22);
    \node at (3.42,0.4) {$2$};
    \draw (2.06,0.6)--(2.80,0)  (2.06,-0.6)--(2.80,0);
    \draw[dash pattern=on 3pt off 2pt] (3.24,0)--(3.84,0);
    \node at (5,0) {or};
            \draw (6.84,0.6) circle (0.22) (6.68,0.44) -- (7.00,0.76) (6.68,0.76) -- (7.00,0.44);
      \node at (6.32,0.6) {$0$};
      \draw (6.84,-0.6) circle (0.22) (6.68,-0.44) -- (7.00,-0.76) (6.68,-0.76) -- (7.00,-0.44);
      \node at (6.32,-0.6) {$1$};
      \draw[dash pattern=on 3pt off 2pt]  (6.94,0.38)--(6.94,-0.38) (6.74,0.38)--(6.74,-0.38); 
    \draw (7.86,0.16)--(8.18,-0.16)  (7.86,-0.16)--(8.18,0.16);
    \node at (8.42,0.4) {$2$};
    \draw (7.06,0.6)--(7.80,0)  (7.06,-0.6)--(7.80,0);
    \draw[dash pattern=on 3pt off 2pt] (8.24,0)--(8.84,0);
        \end{tikzpicture}
        \end{center}
        then $T_2: U_q[\mathfrak{osp}(2m+1|2n)^{(1)}]\rightarrow U_q[^{r_2}\mathfrak{osp}(2m+1|2n)^{(1)}]$ acts on node 0 and 1 given by
        \begin{align*}
            &T_2(\chi_0^+)=d_3q^{-d_3}[\![\chi_0^+,\,\chi_2^+]\!],\quad T_2(\chi_0^-)=-(-1)^{[r_2(\alpha_0)][\alpha_2]}[\![\chi_0^-,\,\chi_2^-]\!],\quad T_2(K_0)=K_0K_2, \\
            &T_2(\chi_1^+)=d_3q^{-d_3}[\![\chi_1^+,\,\chi_2^+]\!],\quad T_2(\chi_1^-)=-(-1)^{[r_2(\alpha_1)][\alpha_2]}[\![\chi_1^-,\,\chi_2^-]\!],\quad T_2(K_1)=K_1K_2.
        \end{align*}

       \item\textbf{Case} 4. If the Dynkin diagram of $\mathfrak{osp}(2m+1|2n)^{(1)}$ has subdiagram as
        \begin{center}
            \begin{tikzpicture}
                \draw[dash pattern=on 3pt off 2pt] (-0.82,0)--(-0.22,0);
                \draw (0.16,0.16)--(-0.16,-0.16)  (0.16,-0.16)--(-0.16,0.16);
                \node[above] at (0,0.4) {$i-1$};
                \draw  (0.22,0)--(1.22,0);
                \draw (1.28,0.16)--(1.60,-0.16)  (1.28,-0.16)--(1.60,0.16);
                \node[above] at (1.44,0.4) {$i$};
                \draw (1.66,0)--(2.66,0);
                \draw (2.72,0.16)--(3.04,-0.16)  (2.72,-0.16)--(3.04,0.16);
                \node[above] at (2.88,0.4) {$i+1$};
                \draw[dash pattern=on 3pt off 2pt] (3.10,0)--(3.70,0);
            \end{tikzpicture}
        \end{center}
        then $T_i: U_q[\mathfrak{osp}(2m+1|2n)^{(1)}]\rightarrow U_q[^{r_i}\mathfrak{osp}(2m+1|2n)^{(1)}]$ is defined by
        \begin{equation*}
            \begin{split}
                &T_i(\chi_{i-1}^+)=d_{i+1}q^{-d_{i+1}}[\![\chi_{i-1}^+,\,\chi_i^+]\!],\quad T_i(\chi_{i-1}^-)=-(-1)^{[r_i(\alpha_{i-1})][\alpha_i]}[\![\chi_{i-1}^-,\,\chi_i^-]\!], \\
                &T_i(\chi_i^+)=-d_i\chi_i^-K_i,\quad T_i(\chi_i^-)=-d_{i+1}K_i^{-1}\chi_i^+, \\
                &T_i(\chi_{i+1}^+)=(-1)^{[\alpha_i][r_i(\alpha_{i+1})]}d_{i}q^{-d_{i}}[\![\chi_{i+1}^+,\,\chi_i^+]\!],\quad T_i(\chi_{i+1}^-)=-[\![\chi_{i+1}^-,\,\chi_i^-]\!], \\
                & T_i(K_{i-1})=K_{i-1}K_i,\quad T_i(K_i)=K_i^{-1},\quad T_i(K_{i+1})=K_iK_{i+1}.
            \end{split}
        \end{equation*}

        \item\textbf{Case} 5. If the Dynkin diagram of $\mathfrak{osp}(2m+1|2n)^{(1)}$ has subdiagram as
        \begin{center}
          \begin{tikzpicture}
            \draw (0.06,0.16)--(0.38,-0.16)  (0.06,-0.16)--(0.38,0.16);
            \node[above] at (0.22,0.4) {$i$};
            \draw[double, double distance=3pt] (0.44,0) -- (1.1,0);
            \draw[-{To[scale=2, line width=0.4pt]}, line width=0.4pt] (1.2,0) -- ++(0.01,0);
            \draw (1.46,0) circle (0.22);
            \draw[fill=black] (1.46,0) circle (0.12);
            \node[above] at (1.46,0.4) {$j$};
          \end{tikzpicture}
        \end{center}
        then $T_i: U_q[\mathfrak{osp}(2m+1|2n)^{(1)}]\rightarrow U_q[^{r_i}\mathfrak{osp}(2m+1|2n)^{(1)}]$ acts on node $j$ given by
        \begin{align*}
          &T_i(\chi_j^+)=(-1)^{[\alpha_i][r_i(\alpha_j)]}d_iq^{-\frac{d_i+d_j}{2}}[\![\chi_j^+,\,\chi_i^+]\!], \\
          &T_i(\chi_j^-)=-q^{-\frac{d_j-d_i}{2}}[\![\chi_j^-,\,\chi_i^-]\!],\quad T_i(K_j)=K_iK_j;
        \end{align*}
        while $T_j$ is an automorphism of $U_q[\mathfrak{osp}(2m+1|2n)^{(1)}]$ action on nodes $i$ and $j$ given by
        \begin{align*}
            &T_j(\chi_j^+)=-\chi_j^-K_j,\quad T_j(\chi_j^-)=-(-1)^{[\alpha_j]}K_j^{-1}\chi_j^+,\quad T_j(K_j)=K_j^{-1}, \\
            &T_j(\chi_i^+)=q^{-d_j}[\![ [\![\chi_{i}^+,\,\chi_j^+]\!],\,\chi_j^+ ]\!],\quad T_j(\chi_i^-)=[\![ [\![\chi_{i}^-,\,\chi_j^-]\!],\,\chi_j^- ]\!],\quad T_j(K_i)=K_iK_j^2.
        \end{align*}
    \end{enumerate}

\end{appendix}

\section*{Declarations}

\textbf{Ethical Approval}
{The work does not involve human subjects, animal experiments, or any other activities requiring ethical approval.}\\

\noindent\textbf{Funding}
H. Zhang is supported by the support of the National Natural Science Foundation of China (No. 12271332), and Natural Science Foundation of Shanghai Municipality (No. 22ZR1424600)\\

\noindent\textbf{Data Availability} Data sharing is not applicable to this article as no datasets were generated or analyzed
during the current study\\

\noindent\textbf{Conflict of Interests} There are no conflicts of interest for this work.


\begin{thebibliography}{99}

\bibitem{BECK1}
 J. Beck. Convex bases of PBW type for quantum affine algebras. Comm. Math. Phys. {\bf165}(1994), 193-199.

\bibitem{BECK2}
 J. Beck. Braid group action and quantum affine algebras. Comm. Math. Phys. {\bf165}(1994), 555-568.

\bibitem{BCP}
 J. Beck. C. Chari and A. Pressley. An algebraic characterization of the affine canonical basis. Duke Math. J. {\bf99}(1999), 455-487.

\bibitem{BCFI}
L. Bezerra, L. Calixto, V. Futorny and I. Kashuba.  Representations of affine Lie superalgebras and their quantization in type A. J. Algebra. {\bf611}(2022), 320-340.

\bibitem{Luan}
L. Bezerra, V. Futorny and I. Kashuba. Drinfeld Realization for quantum affine superalgebras of type B. arXiv:2405.05533.

\bibitem{JSKW}
J. F. Cai, S. K. Wang, K. Wu and W. Z. Zhao. Drinfel'd realization of the quantum affine superalgebra $U_q (\widehat{\mathfrak{gl}(1|1)})$. J. Phys. A: Math. Gen. {\bf31}(1998), 1989-1994.

\bibitem{VCA}
V. Chari, A. Pressley. A Guide to Quantum Groups. Cambridge University Press, Cambridge, 1994.


\bibitem{Damiani1}
I. Damiani. Drinfeld realization of quantum affine algebras: the relations. Publ. Res. Inst. Sci. {\bf48} (2012), 661-733.

\bibitem{Damiani2}
I. Damiani. From the Drinfeld realization to the Drinfeld-Jimbo presentation of quantum affine algebras: injectivity.
Publ. Res. Inst. Sci. {\bf51} (2015), 131-171.


\bibitem{Dami1}
I. Damiani. A basis of type Poincare-Birkhoff-Witt for the quantum algebra of $\hat{\mathfrak{sl}}(2)$. J. Algebra. {\bf161}(1993), 291-310.

\bibitem{Dami2}
I. Damiani. The R-matrix for (twisted) affine quantum algebras. Representations and quantizations(Shanghai, 1998). China High. Educ. Press, Beijing, 2000.

\bibitem{BOMTA}
B. Davies, O. Foda, M. Jimbo, T. Miwa and A. Nakayashiki. Diagonalization of the XXZ Hamiltonian by vertex operators. Comm. Math. Phys. {\bf151}(1993), 89-153.

\bibitem{JDIB}
J. T. Ding, I. Frenkel. Isomorphism of two realizations of quantum affine algebra $U_q (\widehat{\mathfrak{gl}(n)})$. Comm. Math. Phys. {\bf156}(1993), 277-300.

\bibitem{VGD}
V. G. Drinfeld. Hopf algebras and the quantum Yang-Baxter equation. Sov. Math. Dokl. {\bf32}(1985), 254-258.

\bibitem{VGD2}
V. G. Drinfeld. A new realization of Yangians and of quantum affine algebras. Sov. Math. Dokl. {\bf36}(1988), 212-216.

\bibitem{HBKD}
H. Fan, B. Y. Hou and K. J. Shi. Drinfeld constructions of the quantum affine superalgebra $U_q (\widehat{\mathfrak{gl}(m|n)})$. J. Math. Phys. {\bf38}(1997), 411-433.

\bibitem{DGLZ}
G. Delius, M. Gould, J. Links and Y. Z. Zhang. On type I quantum affine superalgebras. Internat. J. Modern Phys. A. {\bf10}(1995), 3259-3281.

\bibitem{GJXZ}
{
Y. Gao, N. H. Jing, L. M. Xia and H. L. Zhang. Quantum N-toroidal algebras and extended quantized GIM algebras of N-fold affinization. Comm. Math. Stat. {\bf 13}(2025), 99-137. }

\bibitem{GT}
S. Gautam and V. Toledano Laredo. Yangians and quantum loop algebras. Selecta. Math. {\bf19}(2013), 271-336.



\bibitem{HSTY}
I. Heckenberger, F. Spill, A. Torrielli and H. Yamane. Drinfeld second realization of the quantum  affine superalgebras of $D^{(1)}(2,1;x)$ via the Weyl groupoid. RIMS. Kyoto. Bess. {\bf8}(2008), 171-216.

\bibitem{KKJSJ}
K. Kimura, J. Shiraishi and J. Uchiyama. Level-one Representations of quantum affine superalgebra $U_q [\widehat{sl}(M+1|N+1)^{(1)}]$. Comm. Math. Phys. {\bf188}(1997), 367-378.


\bibitem{MJB}
M. Jimbo. A $q$-difference analogue of $U(\mathfrak{g})$ and the Yang-Baxter equation. Lett. Math. Phys. {\bf10}(1985), 63-69.

\bibitem{NJHZ}
N. H. Jing and H. L. Zhang. Drinfeld realization of quantum twisted affine algebras via braid group. Adv. Math. Phys.  {\bf2016}(2016), 4843075.

\bibitem{NJHZ2}
N. H. Jing and H. L. Zhang. On Hopf algebraic structures of quantum toroidal algebras. Comm. Algebra. {\bf51}(2023), 1135-1157.


\bibitem{SZLO}
S. Levendorskii. On generators and defining relations  of Yangians. J. Geom. Phys. {\bf12}(1993), 1-11.

\bibitem{LYZ}
{
H. D. Lin, H. Yamane and H. L. Zhang. On generators and defining relations of quantum affine superalgebra $U_q(\widehat{\mathfrak{sl}}_{m|n})$. J. Alg. Appl. {\bf23}(2024), 29 pp.}

\bibitem{Lusz1}
G. Lusztig. Affine Hecke algebras and their graded version. J. Amer. Math. Soc. {\bf2}(1989), 599-635.

\bibitem{Lusz2}
G. Lusztig. Introduction to quantum groups. Boston: Birkhauser, 1993.


\bibitem{VS}
{
V. Stukopin. On isomorphism of the Yangian $Y_{\hbar}(A(m,n))$ of the special linear Lie superalgebra and the quantum loop superalgebra $U_h(LA(m,n))$. Teoret. Mat. Fiz.  {\bf198}(2019), 145-161.}

\bibitem{AT}
A. Tsymbaliuk. PBWD bases and shuffle algebra realizations for $U_v(L\mathfrak{sl}_n)$, $U_{v_1,v_2}(L\mathfrak{sl}_n)$, $U_v(L\mathfrak{sl}(m|n))$ and their integral forms. Selecta. Math.  {\bf35}(2021), 48 pp.


\bibitem{Vdl}
J. Van De Leur. A classification of contragredient Lie superalgebras of finite growth. Comm. Algebra {\bf17}(1989), 1815-1841.


\bibitem{WLZ}
X. H. Wu, H. D. Lin and H. L. Zhang. On the structure of quantum toroidal superalgebra ${\mathcal{E}}_{m|n}$. Acta. Math. Sin. {\bf39}(2023), 2117-2138.


\bibitem{YXRZ2}
Y. Xu and R. B. Zhang. Drinfeld realisations and vertex operator respresentations of the quantum affine superalgebras. arXiv:1802.09702, 2018.


\bibitem{HYAM}
H. Yamane. On defining relations of affine Lie superalgebras and affine quantized universal enveloping superalgebras. Publ. RIMS. Kyoto. Univ. {\bf35}(1999), 321-390.

\bibitem{ZHFG}
H. F. Zhang. RTT realization of quantum affine superalgebras and tensor products. Int. Math. Res. Not. IMRN. {\bf2016}(2016), 1126-1157.


\bibitem{ZR1}
R. B. Zhang. Symmetrizable quantum affine superalgebra and their representations. J. Math. Phys. {\bf38}(1997), 535-543.


\bibitem{YZC}
Y. Z. Zhang. Comments on the Drinfeld realization of the quantum affine superalgebra $U_q [gl(m|n)^{(1)}]$ and its Hopft algebra structure. J. Phys. A: Math. Gen. {\bf30}(1997), 8325-8335.


\bibitem{YZC2}
Y. Z. Zhang. Level-one representations and vertex operators of quantum affine superalgebra $U_q [gl(\widehat{N|N})^{(1)}]$. J. Math. Phys. {\bf40}(1999), 6110-6124.
\end{thebibliography}
\end{document}